\newtheorem{theorem}{Theorem}[section]
\newtheorem{conjecture}[theorem]{Conjecture}
\newtheorem{corollary}[theorem] {Corollary}
\newtheorem{definition}[theorem]{Definition}
\newtheorem{example}[theorem]{Example}
\newtheorem{lemma}[theorem]{Lemma}
\newtheorem{observation}[theorem]{Observation}
\newtheorem{proposition}[theorem]{Proposition}
\newtheorem{claim}[theorem]{Claim}
\newtheorem{question}{Question}
\newcommand{\Z}{\mathbb{Z}}
\newcommand{\K}{\mathcal{K}}
\newcommand{\cF}{\mathcal{F}}
\DeclareMathOperator{\Ind}{Ind}
\pgfplotsset{compat=1.16}  
\begin{document}
  
  \label{'ubf'}  
\setcounter{page}{1}                                 

\markboth {\hspace*{-15mm} \centerline{\footnotesize \sc
   Higher independence complexes of graphs and their homotopy types \\}
                 }
                { \centerline                           {\footnotesize \sc  
         Priyavrat Deshpande and Anurag Singh                                                 } \hspace*{-9mm}              
               }


\begin{center}
{ 
       {\Large \textbf { \sc  Higher independence complexes of graphs and their homotopy types
                               }
       }
\\

\medskip

{\sc Priyavrat Deshpande }\\
{\footnotesize Chennai Mathematical Institute, India}\\

{\footnotesize e-mail: {\it pdeshpande@cmi.ac.in}}
}\\
{\sc Anurag Singh }\\
{\footnotesize Chennai Mathematical Institute, India}\\

{\footnotesize e-mail: {\it anuragsingh@cmi.ac.in}}\\
\end{center}

\thispagestyle{empty}

\hrulefill

\begin{abstract}  
{\footnotesize  For $r\geq 1$, the $r$-independence complex of a graph $G$ is a simplicial complex whose faces are subsets $I \subseteq V(G)$ such that each component of the induced subgraph $G[I]$ has at most $r$ vertices. In this article, we determine the homotopy type of $r$-independence complexes of certain families of graphs including complete $s$-partite graphs, fully whiskered graphs, cycle graphs and perfect $m$-ary trees. In each case, these complexes are either homotopic to a wedge of equidimensional spheres or are contractible. We also give a closed form formula for their homotopy types. 
}
 \end{abstract}
 \hrulefill

{\small \textbf{Keywords:} Independence complex, higher independence complex, fully whiskered graphs, cycle graphs, perfect binary trees}

\indent {\small {\bf 2000 Mathematics Subject Classification:} {05C69, 55P15}}


\section{Introduction}
Let $G$ be a simple undirected graph. 
A subset $I \subseteq V(G)$ of the vertex set of G, is called an {\itshape independent set} if the vertices of $I$ are pairwise non-adjacent in $G$. 
The {\itshape independence complex} of $G$, denoted $\Ind_1(G)$, is a simplicial complex whose faces are the independent subsets of $V$.
The study of topology of independence complexes of graphs has received a lot of attention in last two decades.
For example, in Babson and Kozlov's  proof of Lov{\'a}sz's conjecture (in \cite{BK07}) regarding odd cycles and graph homomorphism complexes the independence complexes of cycle graphs played an important role. 
In \cite{Mesh03}, Meshulam related the homology groups of $\Ind_1(G)$ with the domination number of $G$. 
The problem of determining a closed form formula for the homotopy type of $\Ind_1(G)$ for various classes of graphs is also well studied. 
For instance, see \cite{Koz99} for paths and cycle graphs, \cite{Kaw10a} for forests, \cite{BLN08, BH17} for grid graphs, \cite{Kaw10b} for chordal graphs and \cite{SSA19} for categorical product of complete graphs and generalized mycielskian of complete graphs. 
Barmak \cite{Bar03} studied the topology of independence complexes of triangle-free graphs and claw-free graphs. 
He also gave a lower bound for the chromatic number of $G$ in terms of the strong Lusternik-Schnirelmann category of $\Ind_1(G)$.

Recently in \cite{PS18}, Paolini and Salvetti generalized the notion of independence complexes by defining $r$-independence complex for any $r\geq 1$. 
For a graph $G$, a subset $I \subseteq V(G)$ is called {\itshape $r$-independent} if each connected component of the induced subgraph $G[I]$ has at most $r$ vertices. 
For $r\geq 1$, the {\itshape $r$-independence complex} of $G$, denoted $\Ind_r(G)$ is a simplicial complex whose faces are all $r$-independent subsets of $V(G)$. 
They established a relationship between the twisted homology of the classical braid groups and the homology of higher independence complexes of associated Coxeter graphs. 
In particular, they showed that $r$-independence complexes of path graphs are homotopy equivalent to a wedge of spheres (see \cref{higher_ind_path}).

The notion of $r$-independent sets have also been explored from pure graph theoretic viewpoints. In particular, it relates to the notion of clustered graph coloring. We refer the reader to \cite{kang18, sampath93} and the dynamic survey of David Wood \cite{wood18}. The authors would like to thank Ross Kang for bringing these references to our attention. 

Other than those mentioned above, the topology of higher independence complexes can be used to study important combinatorial properties of graphs. For instance,

\begin{enumerate}
\item[(i)] The concept of distance domination number of graphs has been studied extensively by several authors; see, for example, \cite{Gri92,HMV07,TX09}. In a joint work with Shukla \cite{DSS19}, we have been able to establish a relation between the distance $r$-domination number of a graph $G$ and (homological) connectivity of $\Ind_r(G)$.

\item[(ii)]  In \cite{Bar03}, Barmak found a lower bound for the chromatic number of a graph $G$ in terms of strong Lusternik-Schnirelmann (LS) category of $\Ind_1(G)$. 
A natural generalization in this direction would be to find a relation between the clustered $r$-coloring number of a graph $G$ and the strong LS category of $\Ind_r(G)$. 
\end{enumerate}

The aim of this article is to initiate the study of these so-called \emph{higher independence complexes} of graphs.
Our focus is on determining a closed form formula for their homotopy type. 
In this article we identify several classes of graphs for which these complexes are either homotopic to a wedge of equi-dimensional spheres or are contractible. 
In each case we also determine the dimension of the spheres and their number; we achieve this using discrete Morse theory. 

The paper is organized as follows. In \cref{sec_prelim} we recall all the important definitions and relevant tools from discrete Morse theory. 
The formal definition and basic properties of higher independence complexes is given in \cref{sec_basic}. 
In the subsequent sections we have considered various graph classes and determined the homotopy type of $r$-independence complexes. 
As stated above, in all these instances we have shown that the homotopy type is that of wedge of spheres or the complex is contractible. 
The statements of all our results are long and involve combinatorially intricate expressions so we just list graph classes here and provide the reference. 
\begin{enumerate}
    \item The homotopy type of $\Ind_r(G)$ for $G$ a complete $s$-partite graph is given in \cref{theorem:complete r partite graph}.
    \item The homotopy type of $\Ind_r(G)$ for $G$ a fully whiskered graph is computed in \cref{theorem:fully whiskered graph}.
    \item The homotopy type of $\Ind_r(G)$ for $G$ a path graph is computed in \cref{prop:perfect morse fuction on path graph}.
    \item The homotopy type of $\Ind_r(G)$ for $G$ a cycle graph is computed in \cref{theorem:indr of cycle graph}.
    \item The homotopy type of $\Ind_r(G)$ for $G$ a perfect $m$-ary tree is computed in \cref{theorem:indrbhm general}.
\end{enumerate}
The proofs of all our theorems involve the construction of an optimal discrete Morse function.
Finally in \cref{sec_end} we outline some questions and conjectures.

\section{Preliminaries}\label{sec_prelim}
Let $G$ be a simple, undirected graph and $v \in V(G)$ be a vertex of $G$. The total number of vertices adjacent to $v$ is called {\itshape degree} of $v$, denoted deg$(v)$. If deg$(v)=1$, then $v$ is called a {\itshape leaf} vertex. A graph $H$ with $V(H) \subseteq V(G)$ and $E(H) \subseteq E(G)$ is called a \textit{subgraph} of the graph $G$. For a nonempty subset $U$ of $V(G)$, the induced subgraph $G[U]$, is the subgraph of $G$ with vertices $V(G[U]) = U$ and $E(G[U]) = \{(a, b) \in E(G) : a, b \in U\}$. In this article, $G[V(G)\setminus A]$ will be denoted by $G-A$ for $A\subsetneq V(G)$.

\begin{definition}
An {\itshape (abstract) simplicial complex} $\K$ on a finite (vertex) set $V(\K)$ is a collection of subsets such that 
\begin{itemize}
    \item[$(i)$] $\emptyset \in \K$, and 
    \item[$(ii)$] if $\sigma \in \K$ and $\tau \subseteq \sigma$, then $\tau \in \K$.
\end{itemize}
\end{definition}
The elements  of $\K$ are called {\itshape simplices} of $\K$.  
If $\sigma \in \K$ and $|\sigma |=k+1$, then $\sigma$ is said to be {\itshape $k$-dimensional} (here, $|\sigma|$ denotes the cardinality of $\sigma$ as a set). Further, if $\sigma \in \K$ and $\tau \subseteq \sigma$ then $\tau$ is called a {\itshape face} of $\sigma$ and if $\tau \neq \sigma$ then $\tau$ is called a {\itshape proper face} of $\sigma$. 
The elements of the underlying set $V(\K)$ are called either $0$-dimensional simplices or vertices of $\K$. 
A {\itshape subcomplex} of a simplicial complex $\K$ is a simplicial complex whose simplices are contained in $\K$. For $s\geq 0$, the {\itshape $k$-skeleton} of a simplicial complex $\K$, denoted $\K^{(s)}$, is the collection of all those simplices of $\K$ whose dimension is at most $s$.
 In this article, we do not distinguish between an abstract simplicial complex and its geometric realization. Therefore, a simplicial complex will be considered as a topological space, whenever needed.

Let $\mathbb{S}^r$ denote the $r$-sphere and let $\ast$ denote the join of two topological spaces. 
The following results will be used repeatedly in this article.

\begin{lemma}[{\cite[Lemma 2.5]{BW95}}]\label{lemma:join of spheres} Suppose that $\K_1$ and $\K_2$ are two finite simplicial complexes.
\begin{enumerate}
    \item If $\K_1$ and $\K_2$ both have the homotopy type of a wedge of spheres, then so does $\K_1\ast \K_2$.
    \item $\Big{(}\bigvee\limits_{i} \mathbb{S}^{a_i}\Big{)} \ast \Big{(}\bigvee\limits_{j} \mathbb{S}^{b_j}\Big{)} \simeq \bigvee\limits_{i,j} \mathbb{S}^{a_i+b_j+1}$
\end{enumerate}
\end{lemma}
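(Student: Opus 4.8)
The plan is to reduce both statements to three standard facts about CW complexes, all of which apply here because a finite simplicial complex pointed at one of its vertices is a CW complex with a nondegenerate basepoint: (a) for well-pointed spaces the join is homotopy equivalent to the suspension of the smash product, $X\ast Y\simeq \Sigma(X\wedge Y)$; (b) the smash product distributes over finite wedges, $\left(\bigvee_i X_i\right)\wedge\left(\bigvee_j Y_j\right)\cong\bigvee_{i,j}X_i\wedge Y_j$, together with $\mathbb{S}^a\wedge \mathbb{S}^b\cong \mathbb{S}^{a+b}$ and $\Sigma\,\mathbb{S}^n\cong\mathbb{S}^{n+1}$; and (c) the operation $-\ast-$ sends homotopy equivalences of CW complexes to homotopy equivalences — most cleanly because $X\ast Y$ is the double mapping cylinder, i.e.\ the homotopy pushout, of $X\leftarrow X\times Y\rightarrow Y$, and homotopy pushouts are homotopy invariant.

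I would prove part (2) first, since part (1) then reduces to it. Using (a), (b), and the fact that $\Sigma$ commutes with finite wedges, one simply chains equivalences:
\[
\left(\bigvee_i \mathbb{S}^{a_i}\right)\ast\left(\bigvee_j \mathbb{S}^{b_j}\right)\;\simeq\;\Sigma\!\left(\left(\bigvee_i \mathbb{S}^{a_i}\right)\wedge\left(\bigvee_j \mathbb{S}^{b_j}\right)\right)\;\cong\;\Sigma\!\left(\bigvee_{i,j}\mathbb{S}^{a_i}\wedge \mathbb{S}^{b_j}\right)\;\cong\;\bigvee_{i,j}\Sigma\,\mathbb{S}^{a_i+b_j}\;\cong\;\bigvee_{i,j}\mathbb{S}^{a_i+b_j+1},
\]
which is exactly the asserted formula. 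The degenerate case in which one of the wedges is a single point is absorbed by the convention that an empty wedge is a point: then the join is a cone and hence contractible, which again counts as a (trivial) wedge of spheres.

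For part (1), choose homotopy equivalences $\K_1\simeq\bigvee_i\mathbb{S}^{a_i}$ and $\K_2\simeq\bigvee_j\mathbb{S}^{b_j}$, allowing empty index sets to cover the contractible case. By fact (c), $\K_1\ast\K_2\simeq\left(\bigvee_i\mathbb{S}^{a_i}\right)\ast\left(\bigvee_j\mathbb{S}^{b_j}\right)$, and by part (2) the right-hand side is a wedge of spheres; if $\K_1$ or $\K_2$ is contractible the result degenerates to a point, consistent with the statement. Note that part (1) does not require the spheres in $\K_1$ and $\K_2$ to be equidimensional, and indeed the resulting wedge $\bigvee_{i,j}\mathbb{S}^{a_i+b_j+1}$ need not be equidimensional.

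The only non-formal inputs are (a) and (c), so that is where the care goes, and I expect (a) to be the main obstacle to state cleanly. One must record that finite simplicial complexes, based at a vertex, are well-pointed — the basepoint is a subcomplex, so its inclusion is a cofibration — since $X\ast Y\simeq\Sigma(X\wedge Y)$ can genuinely fail for badly based spaces; granting well-pointedness, it is the standard cofiber-sequence argument (or follows from the homotopy-pushout description). Fact (c) similarly rests on that homotopy-pushout description together with the maps out of $X\times Y$ being cofibrations up to homotopy, so that the double mapping cylinder computing $X\ast Y$ is invariant under replacing $X,Y$ by homotopy-equivalent CW complexes. Everything else — distributing smash over wedges, $\mathbb{S}^a\wedge\mathbb{S}^b\cong\mathbb{S}^{a+b}$, $\Sigma$ commuting with wedges, the reduced-versus-unreduced suspension point for well-pointed spaces, and the bookkeeping for contractible factors — is routine.
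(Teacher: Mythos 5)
Your proof is correct. The paper gives no proof of this lemma at all --- the statement is quoted verbatim from Bj\"{o}rner--Welker \cite{BW95} --- and your argument (the join of well-pointed spaces is the suspension of the smash product, the smash product distributes over finite wedges, $\mathbb{S}^{a}\wedge\mathbb{S}^{b}\cong\mathbb{S}^{a+b}$ and $\Sigma\mathbb{S}^{n}\cong\mathbb{S}^{n+1}$, plus homotopy invariance of the join of CW complexes via the double-mapping-cylinder description) is exactly the standard proof of the cited result, with the well-pointedness hypothesis you single out being the only non-formal input and being automatic for a finite simplicial complex based at a vertex.
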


We now discuss some tools needed from discrete Morse theory. The classical reference for this is \cite{Forman98}. However, here we closely follow \cite{Koz07} for notations and definitions.

\begin{definition}[{\cite[Definition 11.1]{Koz07}}]
A {\itshape partial matching} on a poset $P$ is a subset $\mathcal{M} \subseteq P \times P$ such that
\begin{itemize}
\item[(i)] $(a,b) \in \mathcal{M}$ implies $ a \prec b;$ {\itshape i.e.}, $a < b$ and no $c$ satisfies $ a < c < b $, and
\item[(ii)] each $ a \in P $ belong to at most one element in $\mathcal{M}$.
\end{itemize}
\end{definition}

Note that $\mathcal{M}$ is a  partial matching on a poset $P$ if and only if there exists $\mathcal{A} \subset P$ and an injective map $\mu: \mathcal{A}
 \rightarrow P\setminus \mathcal{A}$ such that $\mu(a)\succ a$ for all $a \in \mathcal{A}$. 

An {\itshape acyclic matching} is a partial matching  $\mathcal{M}$ on the poset $P$ such that there does not exist a cycle
 \begin{align*}
 		\mu(a_1)  \succ a_1 \prec \mu( a_2) \succ a_2  \prec \mu( a_3) \succ a_3 \dots   \mu(a_t) \succ a_t  \prec \mu(a_1), t\geq 2.
 \end{align*}

For an acyclic partial matching on $P$, those elements of $P$ which do not belong to the matching are called 
{\itshape critical }.

The main result of discrete Morse theory is the following.

\begin{theorem}[{\cite[Theorem 11.13]{Koz07}}]\label{acyc3}
Let $\K$ be a simplicial complex and $\mathcal{M}$ be an acyclic matching on the face poset of $\K$. Let $c_i$ denote the number of critical $i$-dimensional cells of $\K$ with respect to the matching $\mathcal{M}$. Then $\K$ is homotopy equivalent to a cell complex $\K_c$ with $c_i$ cells of dimension $i$ for each $i \geq 0$, plus a single $0$-dimensional cell in the case where the empty set is also paired in the matching.
\end{theorem}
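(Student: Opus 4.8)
The plan is to induct on the number of simplices of $\K$, using two standard facts. First, an \emph{elementary collapse} --- deleting a free face $\sigma$ (a simplex contained in a unique larger simplex $\tau$, necessarily with $\dim\tau=\dim\sigma+1$) together with $\tau$ --- is a deformation retraction of $|\K|$ onto $|\K\setminus\{\sigma,\tau\}|$, hence a homotopy equivalence. Second, on the CW side: if $h\colon X\to Y$ is a homotopy equivalence and $X\cup_\varphi D^d$ is obtained by attaching a $d$-cell along $\varphi$, then $X\cup_\varphi D^d\simeq Y\cup_{h\varphi}D^d$, and by cellular approximation the latter has the homotopy type of a CW complex obtained from $Y$ by attaching a single $d$-cell. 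The base cases are small: $\K=\{\emptyset\}$ (or the void complex) has $c_i\equiv 0$, is paired with nothing, and is empty/contractible as required; and a single point $\K=\{\emptyset,\{v\}\}$ with $\emptyset$ matched to $\{v\}$ has $c_i\equiv 0$ but $\emptyset$ paired, matching the ``$+1$'' clause. So assume $\K$ has a simplex of dimension $\geq 0$; put $d=\dim\K\geq 0$ and note that every $d$-simplex is a facet, hence either critical or matched downward to a $(d-1)$-simplex. This dichotomy drives the induction.

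\textbf{A critical facet exists.} Say the $d$-simplex $\tau$ is critical. Then $\K'=\K\setminus\{\tau\}$ is a subcomplex; since $\tau$ was an unmatched facet, $\mathcal{M}$ restricts to an acyclic matching on the face poset of $\K'$ (whose modified Hasse diagram is the induced subgraph on the remaining cells), and its critical cells are exactly those of $\K$ with $\tau$ removed, so $c_d(\K')=c_d(\K)-1$, $c_i(\K')=c_i(\K)$ for $i\neq d$, and $\emptyset$ is paired for $(\K',\mathcal{M})$ iff it is for $(\K,\mathcal{M})$. By the inductive hypothesis $\K'\simeq\K'_c$, a CW complex with the prescribed cells. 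Writing $\K=\K'\cup_\varphi D^d$ for the attaching map of the open cell of $\tau$ and applying the second fact above, $\K$ has the homotopy type of a CW complex obtained from $\K'_c$ by attaching one $d$-cell, i.e.\ of one with exactly $c_i(\K)$ cells in each dimension (plus the correction cell if applicable).

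\textbf{Every facet is matched.} Write $\tau\mapsto a(\tau)$ for the matching on the set of $d$-simplices, an injection into the $(d-1)$-simplices with $a(\tau)\prec\tau$. If $d=0$ then, since $\emptyset$ lies in at most one pair, $\K$ is a single matched point and the statement holds directly; so assume $d\geq 1$ (whence $a(\tau)\neq\emptyset$). I claim acyclicity forces some $\tau$ with $a(\tau)$ a free face. Form the digraph $H$ on the $d$-simplices with an arc $\tau\to\tau'$ iff $\tau\neq\tau'$ and $a(\tau)\subsetneq\tau'$. A directed cycle $\tau_1\to\cdots\to\tau_m\to\tau_1$ in $H$ yields $a(\tau_i)\subsetneq\tau_{i+1}$, i.e.\ $\mu(a(\tau_{i+1}))=\tau_{i+1}\succ a(\tau_i)$, for all $i$ (mod $m$); as $m\geq 2$ forces the $a(\tau_i)$ distinct, $a(\tau_1),\dots,a(\tau_m)$ is a cycle of the form forbidden in the definition of an acyclic matching. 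Hence $H$ is acyclic and has a sink $\tau$: no $\tau'\neq\tau$ satisfies $a(\tau)\subsetneq\tau'$, so --- since $d=\dim\K$ --- the only proper coface of $a(\tau)$ is $\tau$, i.e.\ $a(\tau)$ is free. Collapse the pair $(a(\tau),\tau)$ to obtain $\K''=\K\setminus\{a(\tau),\tau\}$ with $\K\simeq\K''$. The restricted matching $\mathcal{M}''$ is acyclic on $\K''$ and, since we removed only a matched pair (not involving $\emptyset$), has the same critical cells and the same status of $\emptyset$ as $(\K,\mathcal{M})$. By the inductive hypothesis $\K''\simeq\K''_c$ with $c_i$ cells in each dimension, and therefore $\K\simeq\K''_c$.

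The crux --- and the only step that genuinely uses the acyclicity hypothesis --- is the combinatorial lemma in the all-matched case, namely that a sink of the auxiliary digraph $H$ supplies a legitimate free face; once this is in hand the induction runs on autopilot. The topological inputs (collapse $=$ deformation retraction, stability of cell attachments under homotopy equivalence, cellular approximation) are classical, and the remaining work is the routine bookkeeping of critical-cell counts and of the empty-simplex correction across the two branches.
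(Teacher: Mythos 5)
The paper offers no proof of this statement---it is imported verbatim from Kozlov's book---so your argument can only be measured against the proof in the cited source, and against that benchmark it is correct but organized quite differently. Kozlov works globally: acyclicity is first converted into a linear extension of the quotient poset in which every matched pair occupies consecutive positions, and $\K$ is then rebuilt along that order, each step being an elementary expansion (for a matched pair) or a single cell attachment (for a critical cell). You instead induct on the total number of simplices and localize the use of acyclicity to one combinatorial lemma at the top dimension: when every $d$-simplex is matched downward, your digraph $H$ is acyclic because a directed cycle transcribes literally into a forbidden cycle of the matching, and a sink supplies a free face to collapse. This is, in effect, extracting the last entry of Kozlov's linear extension without ever constructing the extension, and it keeps the topological input minimal (an elementary collapse is a deformation retraction; cell attachment is invariant under homotopy equivalence of the base together with cellular approximation). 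What the global approach buys in exchange is the entire collapsing order at once, which is what underlies refinements such as \cref{acyc5}: your induction delivers the homotopy type, but not directly the statement that $\K$ simplicially collapses onto the subcomplex of critical cells when they form one, since your Case 1 removes a critical facet rather than collapsing past it. Two cosmetic points worth tightening: the realization of $\{\emptyset\}$ is the empty space, not a contractible one (what you actually need there is only that $\K_c$ has no cells); and the distinctness of the faces $a(\tau_1),\dots,a(\tau_m)$ around a directed cycle of $H$ follows from injectivity of the matching (each cell lies in at most one pair) applied to the distinct vertices $\tau_i$, not from $m\ge 2$ alone.
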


Following can be inferred from Theorem \ref{acyc3}.

\begin{corollary}\label{acyc4}
If an acyclic matching has critical cells only in a fixed dimension $i$, then $\K$ is homotopy equivalent to a wedge of $i$-dimensional spheres.
\end{corollary}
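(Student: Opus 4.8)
The plan is to read this off directly from \cref{acyc3}. First I would observe that, since $i \geq 0$, the empty face (which has dimension $-1$) cannot be one of the critical cells; as the critical cells are by definition exactly those not in the matching, the empty set must be paired. Hence \cref{acyc3} applies in its second form and tells us that $\K$ is homotopy equivalent to a CW complex $\K_c$ with exactly $c_i$ cells of dimension $i$, no cells in any dimension $j \notin \{0,i\}$, and exactly one $0$-cell.

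Next I would split on whether $i=0$. If $i=0$, then $\K_c$ is a set of $c_0+1$ isolated points, which is precisely $\bigvee_{c_0}\mathbb{S}^0$. If $i\geq 1$, then the $(i-1)$-skeleton of $\K_c$ is its unique $0$-cell $v$ (there being no cells in dimensions $1,\dots,i-1$), so each of the $c_i$ top cells is attached along a map $\mathbb{S}^{i-1}\to\{v\}$, which is necessarily constant. Attaching an $i$-cell to a point along a constant boundary map produces $D^i/\partial D^i\cong\mathbb{S}^i$, and all $c_i$ of these are wedged together at $v$, so $\K_c\simeq\bigvee_{c_i}\mathbb{S}^i$; combined with $\K\simeq\K_c$ this gives the claim.

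I do not expect any genuine obstacle here, since the content is entirely contained in \cref{acyc3}; the proof is purely a matter of unwinding which CW complex $\K_c$ is. The one point requiring care is the bookkeeping around the empty face, which is what forces the extra $0$-cell in \cref{acyc3} and which, having dimension $-1$, is never among the critical $i$-cells. It is also worth recording the degenerate case $c_i=0$: then $\K_c$ is a single point and $\K$ is contractible, i.e.\ a wedge of zero $i$-spheres, so the statement should be read as permitting that empty wedge.
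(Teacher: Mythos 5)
Your proposal is correct and follows exactly the route the paper intends: the paper offers no written proof, merely asserting that the corollary "can be inferred from" \cref{acyc3}, and your argument is precisely that inference spelled out (the empty set is paired since its dimension $-1$ differs from $i$, so $\K_c$ has one $0$-cell and $c_i$ cells of dimension $i$, whose attaching maps are constant, yielding $\bigvee_{c_i}\mathbb{S}^i$). Your attention to the $i=0$ case and to the empty-wedge convention when $c_i=0$ is careful and consistent with how the paper uses this corollary elsewhere.
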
 

\begin{corollary}\label{acyc5}
If the critical cells of an acyclic matching on $\K$ form a subcomplex $\K^\prime$ of $\K$, then $\K$
simplicially collapses to $\K^\prime$, implying that $\K^\prime$ is homotopy equivalent to $\K$.
\end{corollary}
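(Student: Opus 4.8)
The plan is to prove the stronger statement that $\K$ collapses onto $\K'$ through a finite sequence of elementary collapses, and then invoke the standard fact that an elementary collapse is a deformation retraction, so that $\K'$ is a deformation retract of $\K$ and in particular homotopy equivalent to it. I would proceed by induction on the number of matched pairs of $\mathcal{M}$. If there are no matched pairs, every cell is critical, so $\K'=\K$ and there is nothing to prove. Otherwise the heart of the argument is the following claim: there is a matched pair $(\sigma,\tau)\in\mathcal{M}$ such that $\sigma$ is a free face of $\K$, that is, $\tau$ is the unique simplex of $\K$ properly containing $\sigma$; such a pair can be removed by an elementary collapse.

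To establish the claim, let $d$ be the maximal dimension of a matched cell and let $F$ be the set of matched cells of dimension $d$. Since $(\sigma,\tau)\in\mathcal{M}$ forces $\dim\sigma<\dim\tau$, each cell of $F$ is the larger element of its pair and is matched with a cell of dimension $d-1$. First one checks that every $\tau\in F$ is a maximal face of $\K$: if $\tau\subsetneq\rho$ then $\rho$ cannot be matched (its dimension would exceed $d$), so $\rho$ is critical and hence lies in $\K'$; as $\K'$ is a subcomplex this forces $\tau\in\K'$, so $\tau$ is critical, contradicting $\tau\in F$. Now suppose for contradiction that for every $\tau\in F$ its partner $a_\tau:=\mu^{-1}(\tau)$ fails to be a free face, that is, $a_\tau$ lies in some maximal face $\tau'\neq\tau$; the same subcomplex argument shows $\tau'$ is matched of dimension $d$, hence $\tau'\in F$, and $a_\tau\prec\tau'$. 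Iterating the assignment $\tau\mapsto\tau'$ inside the finite set $F$ produces a cyclic sequence $\tau_1,\dots,\tau_t$ ($t\ge 2$, all distinct) with $a_{\tau_i}$ covered by both $\tau_i$ and $\tau_{i+1}$ (indices modulo $t$). Reading this sequence through $\mu$ gives exactly a forbidden alternating cycle $\mu(a_1)\succ a_1\prec\mu(a_2)\succ a_2\prec\cdots\prec\mu(a_1)$, contradicting the acyclicity of $\mathcal{M}$; this proves the claim.

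Given the claim, perform the elementary collapse $\K\searrow\K\setminus\{\sigma,\tau\}$. The restriction of $\mathcal{M}$ to the face poset of $\K\setminus\{\sigma,\tau\}$ is again a partial matching (a whole pair was deleted), it remains acyclic since deleting elements cannot create a cycle, and its set of critical cells is unchanged, so it is still $\K'$; moreover $\sigma$ and $\tau$ are not critical, so $\K'\subseteq\K\setminus\{\sigma,\tau\}$ and $\K'$ is still a subcomplex of this smaller complex. By the induction hypothesis $\K\setminus\{\sigma,\tau\}\searrow\K'$, and composing with the first collapse yields $\K\searrow\K'$. I expect the main obstacle to be the claim itself, and within it the step of extracting the alternating cycle that contradicts acyclicity: this is the only place where the two hypotheses — acyclicity of $\mathcal{M}$ and the fact that the critical cells form a subcomplex — are genuinely used, and the indexing of the cycle has to be arranged carefully to match the definition of an acyclic matching.
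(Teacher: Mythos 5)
Your argument is correct, and it is worth noting that the paper itself offers no proof of this corollary: it merely asserts that it ``can be inferred from'' the quoted main theorem of discrete Morse theory, even though the version of that theorem reproduced in the paper states only a homotopy equivalence with a CW complex, not a simplicial collapse (the collapse statement is part of Kozlov's Theorem~11.13(a), which is not quoted in full). So you have supplied a genuine, self-contained proof of a statement the paper treats as a citation, and your proof is the standard one underlying Kozlov's result: locate a free pair among the top-dimensional matched cells, collapse it, and induct. The key points all check out. Every cofacet of a top-dimensional matched cell $\tau\in F$ must be critical (its dimension exceeds $d$), hence lies in $\K'$, forcing $\tau\in\K'$ and contradicting $\tau$ being matched, so $\tau$ is maximal; any maximal face $\tau'\neq\tau$ containing $a_\tau$ cannot be critical (else $a_\tau\in\K'$) and, being maximal, must be the upper element of its pair, so $\dim\tau'\le d$ while $\tau'\supsetneq a_\tau$ forces $\dim\tau'=d$ and $a_\tau\prec\tau'$; the fixed-point-free self-map $\tau\mapsto\tau'$ of the finite set $F$ then yields a periodic orbit of length $t\ge2$ whose reading $\mu(a_{\tau_1})\succ a_{\tau_1}\prec\mu(a_{\tau_2})\succ\cdots\prec\mu(a_{\tau_1})$ is exactly the forbidden cycle. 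The inductive step is also sound, since deleting a full matched pair leaves an acyclic matching with the same critical cells and $\sigma,\tau\notin\K'$. What your approach buys is transparency about where the two hypotheses enter (acyclicity to find the free face, the subcomplex hypothesis to force cofaces of matched cells to be matched); what the paper's approach buys is brevity, at the cost of leaning on an external statement stronger than the one it actually reproduces.
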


In this article, by matching on a simplicial complex $\K$, we will mean that the matching is on the face poset of $\K$.

\begin{definition}
An acyclic matching on a simplicial complex is called perfect if all the critical cells are homology cells. 
\end{definition}
 

\begin{definition}
For a simplicial complex $\K$ and a vertex $x$ the matching defined as \[\{(\sigma , \ \sigma \cup  \{x\}) : x\notin   \sigma,~ \sigma \cup  \{x\} \in \K\}\] 
is called an {\itshape element matching} on $\K$ using $x$.
\end{definition}

The following result tells us that any sequence of element matchings is always acyclic.

\begin{lemma}[{\cite[Lemma 3.2]{NA18}}]\label{lemma:element matching}
For any $x\in V(\K),$ the element matching defined using $x$ is an acyclic matching on $\K$ and a perfect acyclic matching on the subcomplex $\{\sigma \in \K : \sigma \cup  \{x\} \in \K\}$.
\end{lemma}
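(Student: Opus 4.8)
The plan is to verify the three things the statement packages together: that $\mathcal{M}=\{(\sigma,\sigma\cup\{x\}):x\notin\sigma,\ \sigma\cup\{x\}\in\K\}$ is a partial matching on the face poset of $\K$, that it is acyclic, and that its restriction to $\K_x:=\{\sigma\in\K:\sigma\cup\{x\}\in\K\}$ — which is exactly the element matching on $\K_x$ using $x$ — has no critical cells at all, hence is vacuously perfect.

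First I would dispatch the bookkeeping. Every pair $(\sigma,\sigma\cup\{x\})$ is a covering relation, since the two faces differ by the single vertex $x$. The assignment $\mu\colon\sigma\mapsto\sigma\cup\{x\}$, defined on $\mathcal{A}=\{\sigma\in\K:x\notin\sigma,\ \sigma\cup\{x\}\in\K\}$, is injective (one recovers $\sigma=\mu(\sigma)\setminus\{x\}$) and its image lies in $\K\setminus\mathcal{A}$, because every face in the image contains $x$. Finally, each $\tau\in\K$ lies in at most one pair: if $x\in\tau$ it can occur only as the top element of $(\tau\setminus\{x\},\tau)$, and if $x\notin\tau$ it can occur only as the bottom element of $(\tau,\tau\cup\{x\})$, which is present precisely when $\tau\cup\{x\}\in\K$. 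Hence $\mathcal{M}$ is a partial matching.

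Next comes the one step with actual content, acyclicity. Suppose toward a contradiction that there is an alternating cycle $\mu(a_1)\succ a_1\prec\mu(a_2)\succ a_2\prec\cdots\prec\mu(a_t)\succ a_t\prec\mu(a_1)$ with $t\geq2$ and $a_1,\dots,a_t$ distinct. Each $a_i$ is the bottom of a pair of $\mathcal{M}$, so $x\notin a_i$ and $\mu(a_i)=a_i\cup\{x\}$. Reading indices cyclically, each relation $a_{i-1}\prec\mu(a_i)=a_i\cup\{x\}$ says that $a_{i-1}$ is obtained from $a_i\cup\{x\}$ by deleting one vertex; in particular $|a_{i-1}|=|a_i\cup\{x\}|-1=|a_i|$. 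Since $x\notin a_{i-1}$, the deleted vertex must be $x$, so $a_{i-1}=(a_i\cup\{x\})\setminus\{x\}=a_i$. This contradicts distinctness, so no such cycle exists and $\mathcal{M}$ is acyclic. I expect this cardinality-plus-absence-of-$x$ argument to be the only real obstacle; it is short, but it is precisely where the structure of an element matching is used, and the rest is formal.

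Finally, the subcomplex claim. I would first check $\K_x$ is a subcomplex: if $\sigma\in\K_x$ and $\tau\subseteq\sigma$, then $\tau\cup\{x\}\subseteq\sigma\cup\{x\}\in\K$, so $\tau\cup\{x\}\in\K$ and $\tau\in\K_x$. The matching $\mathcal{M}$ restricts to $\K_x$: whenever $(\sigma,\sigma\cup\{x\})\in\mathcal{M}$ with $\sigma\in\K_x$ we also have $\sigma\cup\{x\}\in\K_x$, since $(\sigma\cup\{x\})\cup\{x\}=\sigma\cup\{x\}\in\K$; this restriction is visibly the element matching on $\K_x$ using $x$, and it is acyclic because any cycle inside it would already be a cycle of $\mathcal{M}$. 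There are no critical cells: for $\tau\in\K_x$ with $x\notin\tau$, the definition of $\K_x$ gives $\tau\cup\{x\}\in\K$, so $\tau$ is matched upward; for $\tau\in\K_x$ with $x\in\tau$, we have $\tau\setminus\{x\}\in\K_x$ and $(\tau\setminus\{x\})\cup\{x\}=\tau\in\K$, so $\tau$ is matched downward. Thus every face of $\K_x$ is matched, the matching has no critical cells, and so it is (vacuously) perfect. Since $\emptyset$ is then paired with $\{x\}$, \cref{acyc3} moreover recovers the familiar fact that $\K_x$, the closed star of $x$, is contractible.
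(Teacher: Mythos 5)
Your proof is correct. Note that the paper does not actually prove \cref{lemma:element matching} --- it is quoted from [NA18, Lemma 3.2] without argument --- so there is no internal proof to compare against; your write-up supplies the standard self-contained justification. All three components check out: the pairing is a genuine partial matching because membership or non-membership of $x$ in a face determines on which side of a pair it can sit; the acyclicity argument (any alternating cycle forces $a_{i-1}$ to be a facet of $a_i\cup\{x\}$ not containing $x$, hence equal to $a_i$) is exactly the reason element matchings can never cycle; and on the closed star $\{\sigma\in\K:\sigma\cup\{x\}\in\K\}$ every face, including $\emptyset$, is matched, so the restricted matching has no critical cells and is perfect in the sense of the paper's definition, consistent with \cref{acyc3} giving contractibility of the closed star. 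No gaps.
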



\begin{proposition}[{\cite[Proposition 3.1]{SSA19}}]\label{proposition:sequence of element matching}
Let $\K$ be a simplicial complex and $\{x_1,x_2,\dots,x_n\} $ be a subset of vertex set of $\K$. 
Moreover, let $\Delta_0 = \K$ and for $i \in \{1,\dots,n\}$ define
\begin{equation}\label{eq:elementmatchingsequence}
    \begin{split}
        M(x_i) & = \{ (\sigma , \ \sigma \cup  \{x_i\}) : x_i\notin   \sigma, \text{ and } \sigma, \sigma \cup  \{x_i\} \in \Delta_{i-1} \},\\
        N(x_i) & = \{\sigma \in \Delta_{i-1} : \sigma \in \eta \mathrm{~for~some~} \eta \in M(x_i)\}, \text{ and}\\
        \Delta_{i} & =\Delta_{i-1} \setminus N(x_i).
    \end{split}
\end{equation}
Then, $\bigsqcup\limits_{i=1}^{n} M(x_i)$ is an acyclic matching on $\K$.
\end{proposition}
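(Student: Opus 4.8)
The statement has two ingredients: that $\bigsqcup_{i=1}^{n}M(x_i)$ is a partial matching consisting of covering relations, and that it is acyclic. The first is routine and I would dispose of it quickly: each candidate pair $(\sigma,\sigma\cup\{x_i\})$ differs by a single vertex, hence is a covering relation in the face poset of $\K$; inside a fixed $M(x_i)$ no face is used twice, since $\sigma\mapsto\sigma\cup\{x_i\}$ is injective, its domain being faces avoiding $x_i$ and its image faces containing $x_i$; and for distinct indices one observes that $N(x_i)$ is exactly the set of faces occurring in $M(x_i)$, that $\Delta_i=\Delta_{i-1}\setminus N(x_i)$ so $\Delta_0\supseteq\Delta_1\supseteq\cdots$, and that $N(x_j)\subseteq\Delta_{j-1}\subseteq\Delta_i$ for $j>i$, which is disjoint from $N(x_i)$. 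Hence every face lies in at most one matched pair.

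The real content is acyclicity. Assume toward a contradiction an alternating cycle $\mu(a_1)\succ a_1\prec\mu(a_2)\succ a_2\prec\cdots\prec\mu(a_t)\succ a_t\prec\mu(a_1)$ with $t\ge 2$ and the $a_k$ distinct. By the disjointness above each pair $(a_k,\mu(a_k))$ lies in a unique $M(x_{i_k})$, so $\mu(a_k)=a_k\cup\{x_{i_k}\}$ with $x_{i_k}\notin a_k$; moreover $a_k\in\Delta_{i_k-1}\setminus\Delta_{i_k}$, so the index $i_k$ is pinned down by $a_k$. The down-step $\mu(a_k)\succ a_{k+1}$ deletes a single vertex of $a_k$ (not $x_{i_k}$, since $a_{k+1}\neq a_k$), so $x_{i_k}\in a_{k+1}$; as $x_{i_{k+1}}\notin a_{k+1}$ this gives $i_k\neq i_{k+1}$. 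The heart of the proof is the claim that the indices do not increase along the cycle, $i_{k+1}\le i_k$: if $i_{k+1}>i_k$ then $a_{k+1}\in\Delta_{i_{k+1}-1}\subseteq\Delta_{i_k}$, and since $x_{i_k}\in a_{k+1}$ survival of $a_{k+1}$ past stage $i_k$ forces $a_{k+1}\setminus\{x_{i_k}\}\notin\Delta_{i_k-1}$; but $a_{k+1}\setminus\{x_{i_k}\}$ is a codimension-one subface of $a_k\in\Delta_{i_k-1}$, and chasing this around the cycle — using that the vertices deleted in the down-steps are themselves among $x_1,\dots,x_n$, because the composite of the $t$ swaps must be the identity — leads to a contradiction. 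Granting $i_{k+1}\le i_k$, together with $i_k\neq i_{k+1}$ one gets $i_1>i_2>\cdots>i_t>i_1$, which is absurd; hence no cycle exists.

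The non-increasing-index claim is the only genuine difficulty, and what makes it delicate is that for $i\ge 1$ the intermediate objects $\Delta_i$ are up-sets of the face poset of $\K$ but in general not down-sets, so one cannot argue by downward closure. A complementary way to organize the reduction, which I would develop in parallel, is to peel the indices off one at a time: because $\K$ is a simplicial complex, $N(x_1)=\{\sigma\in\K:\sigma\cup\{x_1\}\in\K\}$ is the closed star of $x_1$, a subcomplex on which $M(x_1)$ is precisely the element matching of \cref{lemma:element matching} and hence acyclic, while $\Delta_1$ is an up-set; feeding the order-preserving map $\K\to\{0<1\}$ with fibres $N(x_1)$ and $\Delta_1$ into the standard decomposition lemma for acyclic matchings (\cite{Koz07}) reduces acyclicity on $\K$ to that of $\bigsqcup_{i\ge 2}M(x_i)$ on $\Delta_1$, after which one wants to induct on $n$. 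The subtlety — and the main obstacle — is that this induction has to be set up for the class of posets actually produced by the construction (which contains non-complexes such as $\Delta_1$), so that the inductive hypothesis applies; making that inductive step go through is exactly what the non-increasing-index analysis of the previous paragraph is designed to accomplish.
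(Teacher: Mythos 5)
The paper does not prove this proposition --- it is imported from \cite{SSA19} --- so your argument has to stand on its own. The routine parts are fine: the union is a partial matching of covering relations, each $a_k$ determines its index $i_k$, and $i_k\neq i_{k+1}$. The gap is the central claim that the indices do not increase along a cycle. The facts you extract from the hypothesis $i_{k+1}>i_k$ (that $a_{k+1}\setminus\{x_{i_k}\}\notin\Delta_{i_k-1}$ and that this set is a codimension-one subface of $a_k\in\Delta_{i_k-1}$) are mutually consistent and yield no contradiction on their own, and the promised ``chasing around the cycle'' is never carried out. Indeed the index \emph{can} increase along an alternating path: take $\K$ on $\{x_1,x_2,x_3,u,w\}$ with facets $\{u,w,x_2\}$, $\{u,x_2,x_3\}$, $\{u,x_1,x_3\}$ and the sequence $x_1,x_2,x_3$. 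Then $(\{u,w\},\{u,w,x_2\})\in M(x_2)$, $\{u,x_2\}\prec\{u,w,x_2\}$, and $(\{u,x_2\},\{u,x_2,x_3\})\in M(x_3)$, because $\{u\}$ and $\{u,x_3\}$ lie in $N(x_1)$ and hence $\{u,x_2\}$ and $\{u,x_2,x_3\}$ are not matched by $M(x_2)$; the indices along this path are $2$ then $3$. Since you have already shown $i_k\neq i_{k+1}$, the assertion ``$i_{k+1}\le i_k$ around every cycle'' is logically equivalent to ``there is no cycle'': as stated, your key claim is a restatement of the theorem, and your second paragraph concedes that the inductive alternative does not close either (because $\Delta_1$ is not a simplicial complex) and defers back to this unproved claim.

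The repair is to localize not at a step where the minimal-index vertex is \emph{added} but at a step where it is \emph{deleted}. Write $a_{k+1}=(a_k\cup\{x_{i_k}\})\setminus\{y_k\}$ with $y_k\in a_k$, $y_k\neq x_{i_k}$, and let $j=\min_k i_k$. Since the cycle closes up, the multiset of deleted vertices $\{y_k\}$ equals the multiset of added vertices $\{x_{i_k}\}$ (this is exactly your ``composite of swaps is the identity'' observation), so $y_l=x_j$ for some $l$; and $i_l\neq j$ because $y_l\neq x_{i_l}$, hence $i_l>j$. Now $\mu(a_l)=a_{l+1}\cup\{x_j\}$ with $x_j\notin a_{l+1}$, and both $a_{l+1}\in\Delta_{i_{l+1}-1}$ and $\mu(a_l)\in\Delta_{i_l-1}$ lie in $\Delta_{j-1}$ by minimality of $j$. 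By the definition of $M(x_j)$ in \cref{eq:elementmatchingsequence} this forces $(a_{l+1},\mu(a_l))\in M(x_j)$, so $\mu(a_l)\in N(x_j)$ and $\mu(a_l)\notin\Delta_j$; but $\mu(a_l)\in\Delta_{i_l-1}\subseteq\Delta_j$ since $i_l>j$ --- a contradiction. This rules out every cycle directly, with no induction and no monotonicity of the indices, and is the argument I would substitute for your third paragraph.
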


Another useful way to construct an acyclic matching on a poset $P$ is to first map $P$ to some other poset $Q$, then construct acyclic matchings on the fibers of this map and patch these acyclic matchings together to form an acyclic matching for the whole poset. 

\begin{theorem}[Patchwork theorem {\cite[Theorem 11.10]{Koz07}}]\label{theorem:patchwork theorem}
If $ \varphi \ : \ P \rightarrow Q$ is an order-preserving map and for each $q \in Q$, the subposet $\varphi^{-1}(q)$ carries an acyclic matching $M_q$, then ${\underset{q \ \in \ Q }{\bigsqcup}} M_q $ is an acyclic matching on P.
\end{theorem}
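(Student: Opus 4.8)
The plan is to verify directly the two defining conditions of an acyclic matching for the set $\mathcal{M} := \bigsqcup_{q \in Q} M_q$, using only that the fibers $\varphi^{-1}(q)$, $q \in Q$, partition $P$ and that $\varphi$ is order-preserving. The one structural fact I would isolate at the outset is: if $a$ and $b$ lie in the same fiber $\varphi^{-1}(q)$, then $a \prec b$ in $P$ if and only if $a \prec b$ in the subposet $\varphi^{-1}(q)$. The nontrivial implication is ``$\Leftarrow$'': if $a < c < b$ in $P$ for some $c$, then order-preservation forces $q = \varphi(a) \le \varphi(c) \le \varphi(b) = q$, so $c \in \varphi^{-1}(q)$, and $c$ then witnesses that $a \prec b$ fails inside $\varphi^{-1}(q)$. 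Granting this, the partial matching property is immediate: each $a \in P$ lies in exactly one fiber $\varphi^{-1}(\varphi(a))$, and since $M_{\varphi(a)}$ is a partial matching on that fiber, $a$ belongs to at most one pair of $M_{\varphi(a)}$, hence (the $M_q$ having pairwise disjoint ground sets) to at most one pair of $\mathcal{M}$; and any pair $(a,b) \in M_q$ satisfies $a \prec b$ in $\varphi^{-1}(q)$ by definition, hence $a \prec b$ in $P$ by the structural fact. Write $\mu$ for the injection associated to $\mathcal{M}$, which restricts on each fiber to the injection of $M_q$.

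Next comes acyclicity, which is the heart of the argument. Suppose for contradiction that $\mathcal{M}$ admits a cycle
\[
\mu(a_1) \succ a_1 \prec \mu(a_2) \succ a_2 \prec \cdots \prec \mu(a_t) \succ a_t \prec \mu(a_1), \qquad t \ge 2,
\]
with the $a_i$ distinct. Since $(a_i, \mu(a_i)) \in \mathcal{M}$, both entries lie in a single fiber; set $q_i := \varphi(a_i) = \varphi(\mu(a_i))$. Applying the order-preserving map $\varphi$ to the relation $a_i \prec \mu(a_{i+1})$ (indices read modulo $t$) gives $q_i = \varphi(a_i) \le \varphi(\mu(a_{i+1})) = q_{i+1}$, so $q_1 \le q_2 \le \cdots \le q_t \le q_1$, and all the $q_i$ coincide with a single $q \in Q$. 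Consequently every element occurring in the cycle lies in $\varphi^{-1}(q)$, every matched pair in it belongs to $M_q$, and — by the structural fact applied to each pair of consecutive terms — every covering relation appearing in the zigzag is also a covering relation inside the subposet $\varphi^{-1}(q)$. Thus the displayed zigzag is a cycle for the matching $M_q$ on $\varphi^{-1}(q)$, contradicting the hypothesis that $M_q$ is acyclic. Hence $\mathcal{M}$ has no such cycle, and together with the first step this shows $\mathcal{M}$ is an acyclic matching on $P$.

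The only step demanding care — and what I would regard as the crux — is the translation between covering relations in $P$ and covering relations in the subposets $\varphi^{-1}(q)$: it is needed both to confirm that $\mathcal{M}$ is a genuine partial matching on $P$ (pairs that merely cover inside a fiber must cover in $P$) and to transport the hypothetical $P$-cycle back into a single fiber so as to contradict $M_q$'s acyclicity. Both directions reduce to the same one-line observation about $\varphi$-images of elements lying between two elements of equal image; beyond this, the proof is just bookkeeping with the fiber partition and uses nothing about the internal structure of $Q$.
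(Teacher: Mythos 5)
Your proof is correct. The paper does not prove this statement itself---it is quoted from Kozlov's book \cite[Theorem 11.10]{Koz07}---and your argument is precisely the standard one given there: the key observation that an element lying strictly between two elements of the same fiber must itself lie in that fiber (by order-preservation and antisymmetry in $Q$), which both transfers covering relations between $P$ and the fibers and forces any hypothetical cycle, via $q_1 \le q_2 \le \cdots \le q_t \le q_1$, to collapse into a single fiber where it contradicts the acyclicity of $M_q$.
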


The following result is a special case of \cref{theorem:patchwork theorem}.
\begin{theorem}[{\cite[Lemma 4.3]{Jon08}}]\label{theorem:jacobs result of pataching two matchings}
Let $\K_0$ and $\K_1$ be disjoint families of subsets of a finite set such that $\tau \nsubseteq \sigma$ if $\sigma \in \K_0$ and $\tau \in \K_1$. If $M_i$ is an acyclic matching on $\K_i$ for $i=0,1$ then $M_0 \cup M_1$ is an acyclic matching on $\K_0 \cup \K_1$. 
\end{theorem}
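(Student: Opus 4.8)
The plan is to deduce this statement directly from the Patchwork theorem, \cref{theorem:patchwork theorem}. Let $P = \K_0 \cup \K_1$, ordered by inclusion, and let $Q = \{0,1\}$ with the order $0 < 1$. Since $\K_0$ and $\K_1$ are disjoint, the assignment $\varphi \colon P \to Q$ defined by $\varphi(\sigma) = i$ whenever $\sigma \in \K_i$ is a well-defined map, and its fibers are precisely $\varphi^{-1}(0) = \K_0$ and $\varphi^{-1}(1) = \K_1$.

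The key step is to verify that $\varphi$ is order-preserving, and this is exactly where the hypothesis is used. Suppose $\tau \subseteq \sigma$ with $\tau, \sigma \in P$. If $\sigma \in \K_1$, then $\varphi(\sigma) = 1 \geq \varphi(\tau)$, so there is nothing to check. If $\sigma \in \K_0$, then the hypothesis (in contrapositive form, $\tau \subseteq \sigma$ and $\sigma \in \K_0$ imply $\tau \notin \K_1$) forces $\tau \notin \K_1$; since $\tau \in \K_0 \cup \K_1$, we get $\tau \in \K_0$ and hence $\varphi(\tau) = 0 = \varphi(\sigma)$. Thus $\tau \subseteq \sigma$ implies $\varphi(\tau) \leq \varphi(\sigma)$, as required. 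Note that, $\varphi$ being order-preserving, any $c$ with $a < c < b$ and $\varphi(a) = \varphi(b)$ automatically satisfies $\varphi(c) = \varphi(a)$, so a covering relation inside a fiber $\K_i$ remains a covering relation in $P$; in particular $M_0 \cup M_1$ is a genuine partial matching on $P$, where the disjointness of $\K_0$ and $\K_1$ guarantees that no element is matched twice.

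With $\varphi$ order-preserving and each fiber $\varphi^{-1}(i) = \K_i$ carrying the acyclic matching $M_i$, the Patchwork theorem immediately yields that $M_0 \cup M_1 = \bigsqcup_{q \in Q} M_q$ is an acyclic matching on $P = \K_0 \cup \K_1$, which is the claim.

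I do not anticipate a serious obstacle here: the only point requiring care is getting the direction of the order-preservation condition right and recognizing that the hypothesis ``$\tau \nsubseteq \sigma$ for $\tau \in \K_1$, $\sigma \in \K_0$'' is precisely the statement that no element of the upper fiber $\K_1$ lies (in the inclusion order) below an element of the lower fiber $\K_0$. As an alternative to invoking the Patchwork theorem, one can argue directly: if $M_0 \cup M_1$ admitted an alternating cycle $\mu(a_1) \succ a_1 \prec \mu(a_2) \succ a_2 \prec \cdots \prec \mu(a_1)$, then applying $\varphi$ and using that $a_i$ and $\mu(a_i)$ always lie in the same $\K_j$ while $a_i \subseteq \mu(a_{i+1})$ (cyclically), one obtains $\varphi(a_1) \leq \varphi(a_2) \leq \cdots \leq \varphi(a_1)$, so all the $\varphi(a_i)$ are equal and the whole cycle lies inside a single $\K_j$, contradicting the acyclicity of $M_j$.
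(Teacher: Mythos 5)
Your proof is correct and follows exactly the route the paper indicates: the paper gives no separate proof but presents this statement as a special case of the Patchwork theorem (\cref{theorem:patchwork theorem}), and your argument is precisely the careful fleshing-out of that reduction, including the one point that genuinely needs checking (that the hypothesis makes $\varphi$ order-preserving and that covering relations within each fiber remain covering relations in the union).
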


\section{Basic results for higher independence complex}\label{sec_basic}

We begin this section by exploring some basic results related to the main object of this article, {\itshape i.e.}, higher independence complexes. 
Henceforth, unless otherwise mentioned, $r\geq 1$ is a natural number and $[n]$ will denote the set $\{1,\dots,n\}$.

\begin{definition}
 Let $G$ be a graph and $A\subseteq V(G)$. Then $A$ is called {\itshape $r$-independent} if connected components of $G[A]$ have cardinality at most $r$. 
\end{definition}

\begin{definition}
Let $G$ be a graph and $r \in \mathbb{N}$. The {\itshape $r$-independence complex} of $G$, denoted $\Ind_r(G)$ has vertex set $V(G)$ and its simplices are all $r$-independent subsets of $V(G)$.
\end{definition}

\begin{example}
\normalfont
\cref{fig:example of ind complex} shows a graph $G$, its $1$-independence complex and the $2$-independence complex. The $1$-independence complex of $G$ consists of $2$ maximal simplices, namely $\{v_2,v_3,v_4\}$ and $\{v_1\}$. 
The complex $\Ind_2(G)$ consists of $4$ maximal simplices, namely $\{v_1,v_2\}, \{v_1,v_3\}, \{v_1, v_4\}$ and $\{v_2,v_3,v_4\}$.
\end{example}

\begin{figure}[H]
	\begin{subfigure}[]{0.30 \textwidth}
		\centering
		\begin{tikzpicture}
 [scale=0.4, vertices/.style={draw, fill=black, circle, inner sep=1.0pt}]
        \node[vertices, label=below:{$v_1$}] (v1) at (0,0)  {};
		\node[vertices, label=below:{$v_2$}] (v2) at (4.2,0)  {};
		\node[vertices, label=above:{$v_3$}] (l11) at (-2.5,3)  {};
		\node[vertices, label=above:{$v_4$}] (l12) at (2,3.2)  {};
		
\foreach \to/\from in {v1/v2}
\path (v1) edge node[pos=0.5,below] {} (v2);
\path (v1) edge node[pos=0.5,left] {} (l11);
\path (v1) edge node[pos=0.5,left] {} (l12);
\end{tikzpicture}\caption{$G$}\label{fig:G221}
	\end{subfigure}
	\begin{subfigure}[]{0.30 \textwidth}
		\centering
	\begin{tikzpicture}
 [scale=0.21, vertices/.style={draw, fill=black, circle, inner sep=1.0pt}]
\node[vertices, label=left:{$v_2$}] (a) at (0,0) {};
\node[vertices, label=right:{$v_3$}] (b) at (6,4) {};
\node[vertices, label=left:{$v_1$}] (c) at (7,12) {};
\node[vertices, label=right:{$v_4$}] (f) at (10,0) {};

\foreach \to/\from in {a/b,a/f,b/f}
\draw [-] (\to)--(\from);
\filldraw[fill=gray!60, draw=black] (0,0)--(6,4)--(10,0)--cycle;
\end{tikzpicture}\caption{$\Ind_1(G)$}
	\end{subfigure}
	\begin{subfigure}[]{0.30 \textwidth}
		\centering
	\begin{tikzpicture}
 [scale=0.21, vertices/.style={draw, fill=black, circle, inner sep=1.0pt}]
\node[vertices, label=left:{$v_2$}] (a) at (0,0) {};
\node[vertices, label=right:{$v_3$}] (b) at (6,4) {};
\node[vertices, label=left:{$v_1$}] (c) at (7,12) {};
\node[vertices, label=right:{$v_4$}] (f) at (10,0) {};

\foreach \to/\from in {a/b,a/f,a/c,b/c,f/c,b/f}
\draw [-] (\to)--(\from);
\filldraw[fill=gray!60, draw=black] (0,0)--(6,4)--(10,0)--cycle;
\end{tikzpicture}\caption{$\Ind_2(G)$}
	\end{subfigure}
	\caption{} \label{fig:example of ind complex}
\end{figure}

 The following are some easy observations that follow from the definition.

\begin{observation}\label{observation:basic properties}
\begin{enumerate}
    \item[$(i)$] For any graph $G$, $\Ind_r(G)$ is $(r-2)$-connected. Moreover, if $r\geq |V(G)|$ then $\Ind_r(G)\simeq \{\mathrm{point}\}$.
    \item[$(ii)$] If G is connected graph and $|V(G)|=r+1$, then $\Ind_r(G) \simeq \mathbb{S}^{r-1}$.
    \item[$(iii)$] Let $K_n$ be the complete graph on $n$ vertices, then $\Ind_r(K_n)$ is equal to the $(r-1)^{th}$ skeleton of an $(n-1)$-simplex, denoted $\Delta^{n-1}$, \emph{i.e.,} 
$$\Ind_r(K_n)=(\Delta^{n-1})^{(r-1)}.$$
    \item[$(iv)$] If $G$ and $H$ are two disjoint graphs, then $$\Ind_r(G \sqcup H)\simeq  \Ind_r(G) \ast \Ind_r(H).$$
    \item[$(v)$] If $G$ has a non-empty connected component of cardinality at most $r$, then $\Ind_r(G)$ is contractible.
\end{enumerate}
\end{observation}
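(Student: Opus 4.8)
The plan is to verify each of the five items directly from the definition of $\Ind_r(G)$; the single fact driving everything is that any $A\subseteq V(G)$ with $|A|\le r$ is automatically $r$-independent, because then every connected component of $G[A]$ has at most $r$ vertices. I would first dispose of $(iii)$ and $(iv)$, which are in fact equalities of simplicial complexes. For $(iii)$: a subset $I\subseteq V(K_n)$ induces the complete graph $K_{|I|}$, which is connected, so $I$ is $r$-independent precisely when $|I|\le r$; this says exactly that $\Ind_r(K_n)=(\Delta^{n-1})^{(r-1)}$. For $(iv)$: writing $I=I_1\sqcup I_2$ with $I_1\subseteq V(G)$ and $I_2\subseteq V(H)$, the induced subgraph $(G\sqcup H)[I]$ is the disjoint union $G[I_1]\sqcup H[I_2]$, so its connected components are exactly those of $G[I_1]$ together with those of $H[I_2]$; hence $I$ is $r$-independent iff $I_1\in\Ind_r(G)$ and $I_2\in\Ind_r(H)$, which is precisely the condition for $I$ to be a face of the join $\Ind_r(G)\ast\Ind_r(H)$. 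Thus $\Ind_r(G\sqcup H)=\Ind_r(G)\ast\Ind_r(H)$, and in particular they are homotopy equivalent.

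Items $(i)$ and $(ii)$ follow by comparing $\Ind_r(G)$ with a simplex on the vertex set $V(G)$. By the observation above, $\Ind_r(G)$ contains every face of $\Delta^{|V(G)|-1}$ of dimension at most $r-1$, so $\Ind_r(G)$ and $\Delta^{|V(G)|-1}$ have the same $(r-1)$-skeleton, namely $(\Delta^{|V(G)|-1})^{(r-1)}$. Since the $(r-1)$-skeleton of a simplex is $(r-2)$-connected and the homotopy groups $\pi_i$ with $i\le r-2$ of a CW complex depend only on its $(r-1)$-skeleton, $\Ind_r(G)$ is $(r-2)$-connected; and if moreover $r\ge|V(G)|$ then $V(G)$ is itself $r$-independent, so $\Ind_r(G)=\Delta^{|V(G)|-1}$ is contractible. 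For $(ii)$, when $|V(G)|=r+1$ every proper subset of $V(G)$ is $r$-independent while $V(G)$ is not (since $G$ is connected on $r+1>r$ vertices), so $\Ind_r(G)$ is the boundary complex of the $r$-simplex, which is $\mathbb{S}^{r-1}$.

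For $(v)$, let $C$ be a connected component of $G$ with $1\le|C|\le r$ and fix a vertex $v\in C$. The plan is to show that $v$ is a cone point of $\Ind_r(G)$, i.e. that $\sigma\cup\{v\}\in\Ind_r(G)$ for every $\sigma\in\Ind_r(G)$: since $C$ is an entire component of $G$, passing from $\sigma$ to $\sigma\cup\{v\}$ only alters the connected components of the induced subgraph that meet $C$, and their union with $\{v\}$ is contained in $C$ and hence has at most $|C|\le r$ vertices, while every other component is unchanged and already has at most $r$ vertices. So $\Ind_r(G)$ is a cone, hence contractible. (Alternatively, $(v)$ follows from $(iv)$ applied to $G=G[C]\sqcup(G-C)$ together with the "moreover" part of $(i)$ and the fact that a join with a full simplex is a cone.) All the arguments are short; the only points needing a little care are the skeleton/connectivity reduction in $(i)$ — one must observe that $\Ind_r(G)$ and the simplex literally share their $(r-1)$-skeleton, not merely that $\Ind_r(G)$ contains a highly connected subcomplex — and the verification in $(v)$ that adjoining $v$ keeps the set $r$-independent, which is the real content of that item.
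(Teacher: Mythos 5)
Your proof is correct; the paper itself offers no argument for this Observation (it simply calls these ``easy observations that follow from the definition''), and your verifications --- the skeleton comparison for $(i)$--$(ii)$, the literal equalities of complexes in $(iii)$--$(iv)$, and the cone-point argument for $(v)$ --- are exactly the standard justifications the authors evidently had in mind.
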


In \cref{observation:basic properties}$(iii)$, we saw that $\Ind_r(K_n)$ is homotopy equivalent to a wedge of spheres of dimension $r-1$. 
This suggests that one should expect a similar result for complete $s$-partite graphs for $s\geq 2$. 
Recall that a {\itshape complete $s$-partite graph} is a graph in which vertex set can be decomposed into $s$ disjoint sets $V_1,V_2,\dots,V_s$ such that no two vertices within the same set $V_i$ are adjacent and if $v\in V_i$ and $w\in V_j$ for $i \neq j$ then $v$ is adjacent to $w$. 

\begin{theorem}\label{theorem:complete r partite graph}
Let $s\geq 2$ and $r \geq 1$. Given $m_1,m_2,\dots, m_s \geq 1$, the homotopy type of $r^{\text{th}}$ independence complex of the complete $s$-partite graph $K_{m_1,\dots,m_s}$ is given as follows,
$$ \Ind_r(K_{m_1,\dots,m_s}) \simeq \bigvee\limits_{t} \mathbb{S}^{r-1},$$
 where $t= \displaystyle \binom{M-1}{r} -  \sum_{i=1}^{s} \binom{m_i-1}{r}$ and $M := \sum_{i=1}^{s}m_i$ 
\end{theorem}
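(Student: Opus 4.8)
The plan is to exhibit an optimal (indeed perfect) acyclic matching on the face poset of $\K:=\Ind_r(G)$, where $G:=K_{m_1,\dots,m_s}$, whose critical cells all lie in dimension $r-1$, and then quote \cref{acyc4}. Write $V=V_1\sqcup\cdots\sqcup V_s$ with $|V_i|=m_i$. The first step is the elementary observation that for $\sigma\subseteq V$ the induced subgraph $G[\sigma]$ is edgeless when $\sigma$ lies in a single part and is connected when $\sigma$ meets at least two parts; hence $\sigma\in\K$ if and only if $\sigma\subseteq V_i$ for some $i$ (call such $\sigma$ \emph{monochromatic}) or $|\sigma|\le r$. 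In particular $\K$ contains the $(r-1)$-skeleton of the full simplex on $V$, and its only further faces are the monochromatic sets of size larger than $r$, which sit inside the pairwise vertex-disjoint simplices $\Delta_{V_i}$.

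Fix a vertex $v_i\in V_i$ for each $i$. I would construct the matching as a sequence of element matchings in the sense of \cref{proposition:sequence of element matching}, using $v_1,v_2,\dots,v_s$ in this order. The element matching with $v_1$ (on all of $\K$) pairs $\sigma$ with $\sigma\cup\{v_1\}$ whenever $v_1\notin\sigma$ and $\sigma\cup\{v_1\}\in\K$; a short case analysis shows the faces it leaves uncovered are exactly the $\sigma$ with $v_1\notin\sigma$, $\sigma$ not monochromatic-in-$V_1$, and $|\sigma|\ge r$, i.e. the multichromatic $r$-subsets of $V$ avoiding $v_1$ together with, for each $j\ge 2$, the faces of $\Delta_{V_j}$ of size $\ge r$. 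The multichromatic survivors form an antichain of $(r-1)$-cells, and — the key point — adjoining to any uncovered face a vertex from a \emph{different} part yields a set that is simultaneously multichromatic and of size $>r$, hence not a face of $\K$. Therefore the subsequent element matching with $v_j$ ($j=2,\dots,s$) affects only the filter of faces of $\Delta_{V_j}$ of size $\ge r$, on which it pairs $\sigma$ with $\sigma\setminus\{v_j\}$ or with $\sigma\cup\{v_j\}$ except when $v_j\in\sigma$ and $|\sigma|=r$. By \cref{proposition:sequence of element matching} the union of all these element matchings is an acyclic matching on $\K$.

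The critical cells are then precisely (a) the multichromatic $r$-subsets of $V$ not containing $v_1$, and (b) for each $j\ge 2$, the $r$-subsets of $V_j$ containing $v_j$; all of these are $(r-1)$-dimensional. The number of cells in (a) is $\binom{M-1}{r}$ minus the number of monochromatic $r$-subsets of $V\setminus\{v_1\}$, that is $\binom{M-1}{r}-\binom{m_1-1}{r}-\sum_{j\ge 2}\binom{m_j}{r}$, and the number in (b) is $\sum_{j\ge 2}\binom{m_j-1}{r-1}$; adding and simplifying via Pascal's identity $\binom{m_j}{r}-\binom{m_j-1}{r-1}=\binom{m_j-1}{r}$ gives exactly $t=\binom{M-1}{r}-\sum_{i=1}^s\binom{m_i-1}{r}$. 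Since $\emptyset$ is matched (with $\{v_1\}$) and all critical cells sit in dimension $r-1$, \cref{acyc4} yields $\K\simeq\bigvee_t\mathbb{S}^{r-1}$, interpreted as a point when $t=0$ (then there are no critical cells and the matching is perfect with empty critical set).

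The face description and the binomial arithmetic are routine; the one delicate point is verifying non-interference of the element matchings in the sequence — that after $v_1$ is used, the matchings with $v_2,\dots,v_s$ strip off the simplices $\Delta_{V_j}$ one at a time without touching the multichromatic antichain or the not-yet-processed filters. The single mechanism behind this is that a set formed by adjoining a vertex of a new part to a multichromatic $r$-set, or to a monochromatic set of size $\ge r$, is never $r$-independent, so the hypothetical matching partner does not exist. As a sanity check, the formula should be confirmed on $K_{2,2}=C_4$ (yielding $\bigvee_3\mathbb{S}^1$ when $r=2$), on the star $K_{1,3}$, and against \cref{observation:basic properties}(iii) in the case $K_n=K_{1,\dots,1}$.
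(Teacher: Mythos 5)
Your proposal is correct and follows essentially the same route as the paper: a sequence of element matchings using one distinguished vertex from each part (via \cref{proposition:sequence of element matching}), identification of the surviving cells as certain $(r-1)$-dimensional faces, and an appeal to \cref{acyc4}. Your critical set coincides with the paper's (you merely partition it differently, into multichromatic $r$-sets avoiding $v_1$ versus monochromatic $r$-sets in $V_j$ containing $v_j$), and your binomial bookkeeping yields the same count $t$.
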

\begin{proof}
For simplicity of notations, we denote $K_{m_1,\dots,m_s}$ by $G$ in this proof. Let $V_1,V_2,\dots,V_s$ be the partition of vertices of $G$ and $V_i= \{v_i^1,\dots,v_i^{m_i}\}$ for $i \in [s]$. We now define a sequence of element matching on $\Delta_0:=\Ind_r(G)$ using vertices $\{v_1^1,v_2^1,\dots,v_s^1\}$ as in \Cref{eq:elementmatchingsequence}.
From \cref{proposition:sequence of element matching}, we get that $M= \bigsqcup\limits_{i=1}^{s}M(v_i^1)$ is an acyclic matching with $\Delta_s$ as the set of the critical cells.

\begin{claim}\label{claim:critical for complete r partite}
The set of critical cells after $s^{\text{th}}$ element matching is given as follows: 
\begin{equation*}
\begin{split}
\Delta_s =  &  \{\sigma \in \Ind_r(G): |\sigma|=r, ~v_i^1\notin \sigma ~\forall i \in[s] \mathrm{~and~} \sigma \nsubseteq V_i \mathrm{~for~any~} i \in [s]\} \bigsqcup \\
 & \{\sigma \in \Ind_r(G): |\sigma|=r, ~v_1^1 \notin \sigma \mathrm{~and~}~v_i^1 \in \sigma \mathrm{~for~some~} i \in \{2,\dots,s\} \}.
\end{split}
\end{equation*}
\end{claim}

\begin{proof}[Proof of \cref{claim:critical for complete r partite}]
Clearly, if $|\sigma|=r, ~v_i^1\notin \sigma ~\forall i \in[s] \mathrm{~and~} \sigma \nsubseteq V_i \mathrm{~for~any~} i \in [s]$ then $G[\sigma \cup v_i^1]$ is a connected graph of cardinality $r+1$ implying that $\sigma \notin N(v_i^1)$ for all $i \in [s]$. Therefore, $\{\sigma \in \Ind_r(G): |\sigma|=r, ~v_i^1\notin \sigma ~\forall i \in[s] \mathrm{~and~} \sigma \nsubseteq V_i \mathrm{~for~any~} i \in [s]\} \subseteq \Delta_s$. Now, let $|\sigma|=r$ and $v_i^1\in \sigma$ for some $i \in \{2,\dots,s\}$. For $i \in \{2,\dots,s\}$, if $v_i^1 \in \sigma$ then $\sigma\setminus v_i^1 \in N(v_1^1)$ implies that $\sigma \notin N(v_i^1)$. If $v_j^1 \notin \sigma$, then $|\sigma|=r$ and $v_i^1 \in \sigma$ for some $i\neq j$ implies that $G[\sigma \cup v_j^1]$ is connected subgraph of cardinality $r+1$, hence $\sigma \notin N(v_j^1)$. Thus $\{\sigma \in \Ind_r(G): |\sigma|=r, ~v_1^1 \notin \sigma \mathrm{~and~} v_i^1 \in \sigma \mathrm{~for~some~} i \in \{2,\dots,s\} \} \subseteq \Delta_s$.

Now consider $\sigma \in \Delta_s$. If $\sigma \subseteq V_1$ or $|\sigma|<r$ or $v_1^1 \in \sigma$, then $\sigma \in N(v_1^1)$. If $\sigma \subseteq V_i$ for some $i\in [s]$ and $v_i^1 \notin \sigma$
then $\sigma \cup v_i^1 \in \Ind_r(G)$. 
Hence $\sigma \in N(v_i^1)$, however, this is a contradiction to the fact that $\sigma \in \Delta_s$. 
Thus, either $\sigma \nsubseteq V_i$ for any $i\in [s]$ or if $\sigma \subseteq V_i$ for some $i \in \{2,\dots,s\}$ then $v_i^1 \in \sigma$. Now, let $|\sigma|>r$. $\sigma \in \Ind_r(G)$ implies that $\sigma \subseteq V_i$ for some $i \in [s]$ but then $\sigma \in N(v_i^1)$. Therefore, $|\sigma|=r$. This completes the proof of \cref{claim:critical for complete r partite}.
\end{proof}
Using \cref{claim:critical for complete r partite}, we get that $M$ is an acyclic matching on $\Ind_r(G)$ with exactly $|\Delta_s|$ critical cells of dimension $(r-1)$. Therefore, \cref{acyc4} implies that $\Ind_r(G)$ is homotopy equivalent to a wedge of $|\Delta_s|$ spheres of dimension $r-1$. We now compute the cardinality of the set $\Delta_s$. Using \cref{claim:critical for complete r partite}, we get
\begin{equation*}
    \begin{split}
        |\Delta_s| & = \binom{\sum\limits_{i=1}^{s}m_i -s} {r} -  \sum\limits_{i=1}^{s} \binom{m_i-1}{r}+\sum\limits_{j=2}^{s}\binom{\sum\limits_{i=1}^{s}m_i - j}{r-1}\\
        & = {\binom{\sum\limits_{i=1}^{s}m_i-1}{r}}-  \sum\limits_{i=1}^{s} \binom{m_i-1}{r}
    \end{split}
\end{equation*}
This completes the proof of \cref{theorem:complete r partite graph}.
\end{proof}

We now show that adding a whisker (a leaf vertex) at each vertex of $G$ simplifies the homotopy type of higher independence complex. By adding a whisker at vertex $v$ of $G$, we mean a new vertex is attached to $v$ (the induced subgraph $K_2$ is called {\itshape whisker}). We show that the higher independence complex of fully whiskered graphs is homotopy equivalent to a wedge of equi-dimensional spheres. 

\begin{definition}
Given a graph $G$, a {\itshape fully whiskered graph} of $G$, denoted $W(G)$, is a graph in which a whisker is added to each vertex of $G$.
\end{definition}

\begin{figure}[H]
	\begin{subfigure}[]{0.45 \textwidth}
		\centering
		\begin{tikzpicture}
		[scale=0.5, vertices/.style={circle, draw = black!100, fill= gray!0, inner sep=0.5pt}] 
		\node (a) at (0,0)  {$a_1$};
		\node (b) at (4,0) {$a_2$};
		\node (c) at (8,0)  {$a_3$};
		
		\foreach \from/\to in {a/b, b/c}
		\draw (\from) -- (\to);
		\end{tikzpicture}
		\caption{$P_{3}$}
	\end{subfigure}
	\begin{subfigure}[]{0.45 \textwidth}
		\centering
		\begin{tikzpicture}
		[scale=0.5, vertices/.style={circle, draw = black!100, fill= gray!0, inner sep=0.5pt}]
	    \node (a) at (0,0)  {$a_1$};
		\node (b) at (4,0) {$a_2$};
		\node (c) at (8,0)  {$a_3$};
		\node (d) at (0,3)  {$b_{1}$};
		\node (f) at (4,3)  {$b_{2}$};
		\node (g) at (8,3)  {$b_{3}$};
		
		\foreach \from/\to in {a/b, a/d, b/c, b/f, c/g}
		\draw (\from) -- (\to);
		\end{tikzpicture}
		\caption{$W(P_3)$}
	\end{subfigure}
	\caption{} \label{fig:fully whiskered graph}
\end{figure}

\begin{theorem}\label{theorem:fully whiskered graph}
Let $G$ be a connected graph and $V(G)=\{a_1,a_2,\dots,a_n\}$ be the set of vertices of $G$. The homotopy type of $\Ind_r(W(G))$ is given by the following formula:

$$\Ind_r(W(G)) \simeq 
\begin{cases}\bigvee\limits_{ \binom{n-1}{r-n}} \mathbb{S}^{r-1}, & \mathrm{if\ }  n \leq r \leq 2n-1,\\
\{\mathrm{point}\}, & \mathrm{otherwise}.
\end{cases}$$
\end{theorem}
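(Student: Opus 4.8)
The plan is to produce an optimal acyclic matching on $\Ind_r(W(G))$ using the Patchwork Theorem~(\cref{theorem:patchwork theorem}). Write $V(G)=\{a_1,\dots,a_n\}$, let $b_i$ be the leaf whiskered onto $a_i$ (so $b_i$ is adjacent only to $a_i$), set $\K=\Ind_r(W(G))$, and define $\varphi$ from the face poset of $\K$ to the Boolean lattice $2^{V(G)}$ by $\varphi(\sigma)=\sigma\cap V(G)$. This map is order-preserving, so by \cref{theorem:patchwork theorem} it is enough to match each fiber $\varphi^{-1}(A)$ acyclically and to read off the critical cells.

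Fix $A\subseteq V(G)$. Every face $\sigma$ with $\sigma\cap V(G)=A$ has the form $A\cup\{b_i:i\in B'\}$, and $\sigma\mapsto\sigma\setminus A=\{b_i:i\in B'\}$ is a poset isomorphism of $\varphi^{-1}(A)$ onto a simplicial complex $\mathcal L_A$ on the vertex set $\{b_1,\dots,b_n\}$ (closure under subsets is immediate, $\emptyset\in\mathcal L_A$ exactly when $A\in\K$, and $\varphi^{-1}(A)=\emptyset$ if $A\notin\K$). Since $b_i$ has the unique neighbour $a_i$, adjoining $b_i$ to a face changes only the component of $a_i$; in particular, when $a_i\notin A$ the vertex $b_i$ is isolated in every member of $\varphi^{-1}(A)$. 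Hence, whenever $A\subsetneq V(G)$ with $A\in\K$, fixing an index $i_0$ with $a_{i_0}\notin A$ makes $\mathcal L_A$ closed under adjoining $b_{i_0}$, so by \cref{lemma:element matching} the element matching on $\mathcal L_A$ using $b_{i_0}$ is acyclic and has no critical cells. Transporting these back along $\sigma\mapsto\sigma\setminus A$ gives acyclic matchings $M_A$ on every fiber with $A\neq V(G)$, none of which contributes a critical cell; in particular $\emptyset\in\varphi^{-1}(\emptyset)$ is matched, because $\varphi^{-1}(\emptyset)$ is the full simplex on $\{b_1,\dots,b_n\}$.

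For the fiber over $A=V(G)$ the connectedness of $G$ enters: for $\sigma=V(G)\cup\{b_i:i\in B'\}$ the graph $W(G)[\sigma]$ is connected on $n+|B'|$ vertices, so $\sigma\in\K$ iff $|B'|\le r-n$, whence $\mathcal L_{V(G)}=\{B'\subseteq\{b_1,\dots,b_n\}:|B'|\le r-n\}$ (empty if $r<n$, the full simplex if $r\ge 2n$, and the $(r-n-1)$-skeleton of an $(n-1)$-simplex otherwise). Take $M_{V(G)}$ to be the element matching on $\mathcal L_{V(G)}$ using $b_n$; it is acyclic by \cref{lemma:element matching}, and a direct check shows its critical faces are exactly the $B'\subseteq\{b_1,\dots,b_{n-1}\}$ with $|B'|=r-n$. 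Transported back to $\K$ these are the faces $V(G)\cup\{b_i:i\in B'\}$ with $B'\subseteq\{b_1,\dots,b_{n-1}\}$ and $|B'|=r-n$; there are $\binom{n-1}{r-n}$ of them, each of dimension $(n-1)+(r-n)=r-1$, and this count is nonzero precisely when $n\le r\le 2n-1$.

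Finally, \cref{theorem:patchwork theorem} gives that $M:=\bigsqcup_{A\subseteq V(G)}M_A$ is an acyclic matching on $\K$ whose only critical cells are the $\binom{n-1}{r-n}$ cells of dimension $r-1$ identified above, with $\emptyset$ matched. If $n\le r\le 2n-1$ all critical cells lie in dimension $r-1$, so \cref{acyc4} yields $\K\simeq\bigvee_{\binom{n-1}{r-n}}\mathbb S^{r-1}$; otherwise there are no critical cells and $\emptyset$ is matched, so \cref{acyc3} yields $\K\simeq\{\mathrm{point}\}$. I expect the main obstacle to be the verification that each fiber $\varphi^{-1}(A)$ with $A\subsetneq V(G)$ is a cone on a whisker vertex — equivalently, that one may always freely adjoin some $b_{i_0}$ — together with the precise description of the fiber over $V(G)$, where connectedness of $G$ is indispensable.
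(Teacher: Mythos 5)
Your proof is correct, and at its core it rests on the same mechanism as the paper's: acyclic matchings built from the whisker vertices $b_i$, with the same set of critical cells (the paper excludes $b_1$ where you exclude $b_n$, which is immaterial). The difference is purely organizational, but it is a real one. The paper runs a single global sequence of element matchings $M(b_1),\dots,M(b_n)$ via \cref{proposition:sequence of element matching} and then has to argue, round by round, which cells survive to $\Delta_n$ (this is the content of \cref{claim:fully whiskered graph} together with the cardinality analysis). You instead fiber the face poset over $2^{V(G)}$ by $\sigma\mapsto\sigma\cap V(G)$ and invoke \cref{theorem:patchwork theorem}; this localizes all the work into two clean statements: every fiber over a proper subset $A\subsetneq V(G)$ is a cone on some whisker $b_{i_0}$ with $a_{i_0}\notin A$ (hence contributes nothing), and the top fiber over $V(G)$ is, after stripping off $V(G)$, exactly the $(r-n-1)$-skeleton of a simplex on $\{b_1,\dots,b_n\}$, on which a single element matching leaves the $\binom{n-1}{r-n}$ critical cells in dimension $r-1$. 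What your version buys is transparency: the connectedness hypothesis on $G$ is visibly used only in the top fiber, and the count of critical cells reduces to an elementary statement about skeleta of simplices rather than a survival analysis across $n$ matching rounds. What the paper's version buys is uniformity with the rest of the article, where the same sequential-element-matching template (\cref{proposition:sequence of element matching}) is reused for \cref{theorem:leaf at every vertex general graph} and elsewhere. Both are complete proofs of the stated homotopy type.
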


\begin{proof}
Let $\{b_{1},b_{2},\dots,b_{n}\}$ denote the set of leaves of graph $W(G)$ such that $b_i$ is adjacent to $a_i$ for each $i \in [n]$. 
We define a sequence of element matching on $\Delta_0=\Ind_r(W(G))$ using vertices $\{b_{1},b_{2},\dots,b_{n}\}$ as in \Cref{eq:elementmatchingsequence}.
From \cref{proposition:sequence of element matching}, we get that $M= \bigsqcup\limits_{i=1}^{s}M(b_i)$ is an acyclic matching on $\Ind_r(W(G))$ with $\Delta_n$ as the set of the critical cells.

\begin{claim}\label{claim:fully whiskered graph}
If $\sigma \in \Ind_r(W(G))$ and $V(G) \nsubseteq \sigma$ then $\sigma \notin \Delta_n$, {\itshape i.e.} $\sigma$ is not a critical cell.
\end{claim}
Let $p=\text{min}\{i : a_i \notin \sigma\}$. It is easy to observe that, if $\sigma \in \Delta_{p-1}$ then $\sigma$ belongs to $N(b_{p})$, which implies that $\sigma \notin \Delta_{n}$. This prove \cref{claim:fully whiskered graph}.

We are now ready to prove \Cref{theorem:fully whiskered graph}. First, let $r< n$. Since $G$ is connected, if $\sigma \in \Ind_r(W(G))$ then $V(G) \nsubseteq\sigma$. Hence, result follows from \cref{claim:fully whiskered graph} and \cref{acyc4}. 

Now, assume that $r \geq n$. 
From the definition of $\Ind_r(G)$, it is easy to see that if $\sigma \in \Ind_r(G)$ and the cardinality of $\sigma$ is less than $r$ then $\sigma \in N(b_{1})$. Thus, if $\sigma \in \Delta_{n}$ then the cardinality of $\sigma$ is at least $r$ and $b_{1} \notin \sigma$.  
Using \cref{claim:fully whiskered graph}, we see that if $\sigma \in \Delta_n$ then $V(G) \subseteq \sigma$.  Further, if $\sigma \in \Ind_r(G)$ and $V(G) \subseteq \sigma$ then $\sigma \notin N(b_{i})$ for any $i\in [n]$.
Hence $\sigma \in \Delta_n$ iff $V(G)\subseteq \sigma$, $b_{1} \notin \sigma$ and $| \sigma| \geq r$. 
Moreover, $V(G)\subseteq \sigma$ implies that $G[\sigma]$ is always connected. 
Therefore, the cardinality of $\sigma$ is exactly $r$. Combining all these arguments, we see that $\Delta_n$ is a set of $\displaystyle \binom{n-1}{r-n}$ cells of dimension $r-1$. 
The result follows from \cref{acyc4}.
\end{proof}

We now show that, for a graph $G$, adding more whiskers at non-leaf vertices of $W(G)$ does not affect the connectivity of the higher independence complex. In particular, we give closed form formula for the homotopy type of $r$-independence complexes of these new graphs.

\begin{theorem}\label{theorem:leaf at every vertex general graph}
Let $G$ be a connected graph and $W=\{a_1,a_2,\dots,a_n\}$ be the set of all non-leaf vertices of $G$. For $i \in \{1,\dots,n\}$, let $l_i$ denote the number of leaves adjacent to vertex $a_i$. If $l_i > 0$ for all $i \in \{1,\dots,n\}$, then the homotopy type of $\Ind_r(G)$ is given as follows.

$$\Ind_r(G) \simeq 
\begin{cases}\bigvee\limits_t \mathbb{S}^{r-1}, & \mathrm{if\ }  r \geq n,\\
\{\mathrm{point}\}, & \mathrm{otherwise},
\end{cases}$$

where $t =\displaystyle \binom{\sum_{i=1}^n l_i -1}{r-n}$.
\end{theorem}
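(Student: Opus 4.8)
The plan is to build an optimal acyclic matching on $\Ind_r(G)$ by a sequence of element matchings, using one whisker at each non-leaf vertex, in the spirit of the proof of \cref{theorem:fully whiskered graph}, and then to show that the extra leaves at the vertices $a_i$ only enlarge the set of critical cells in a controlled way. First I would record the structure: write $L$ for the set of leaves of $G$ and $L_i$ for the set of leaves adjacent to $a_i$, so $V(G)=W\sqcup L$, $|L_i|=l_i$ and $|L|=\sum_{i=1}^{n}l_i$ (we may assume $n\ge 1$; if $n=0$ then $G=K_2$, which is immediate). Deleting a leaf from a connected graph on at least two vertices preserves connectivity, and when $n\ge1$ no leaf is adjacent to a leaf, so removing the leaves of $G$ one at a time shows $G[W]$ is connected. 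Consequently $G[W\cup S]$ is connected for every $S\subseteq L$, since each vertex of $S$ has its unique neighbour in $W$; in particular $W\cup S\in\Ind_r(G)$ iff $|W\cup S|\le r$.

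For each $i\in[n]$ fix a leaf $b_i\in L_i$ and run the element matchings $M(b_1),\dots,M(b_n)$ on $\Delta_0=\Ind_r(G)$ as in \eqref{eq:elementmatchingsequence}; by \cref{proposition:sequence of element matching}, $M=\bigsqcup_{i=1}^{n}M(b_i)$ is an acyclic matching whose critical cells are the elements of $\Delta_n$. The heart of the proof is the identification
\[\Delta_n=\bigl\{\,W\cup S\ :\ S\subseteq L\setminus\{b_1\},\ |S|=r-n\,\bigr\},\]
which I would establish in four steps. (i) At the first step every cell containing $b_1$ is matched as an upper element, and every cell of cardinality $<r$ is matched as well (adding $b_1$ to such a cell enlarges the component of $a_1$ to size at most $r$); so each critical cell avoids $b_1$ and has at least $r$ vertices. (ii) If $\sigma\in\Delta_{j-1}$ and $a_j\notin\sigma$ then $b_j$ is isolated in $G[\sigma\cup\{b_j\}]$, and one checks $\sigma$ is matched at step $j$ (paired with $\sigma\cup\{b_j\}$ when $b_j\notin\sigma$, and with $\sigma\setminus\{b_j\}$ when $b_j\in\sigma$); hence every critical cell contains each $a_j$, i.e.\ contains $W$. (iii) A critical cell $\sigma$ therefore has $G[\sigma]$ connected, so $\sigma\in\Ind_r(G)$ forces $|\sigma|\le r$, whence $|\sigma|=r$. (iv) Conversely, if $S\subseteq L\setminus\{b_1\}$ and $|S|=r-n$, then $\sigma=W\cup S$ is never matched: for each $i$, if $b_i\notin S$ then $G[\sigma\cup\{b_i\}]$ is connected of size $r+1$, so $\sigma\cup\{b_i\}\notin\Ind_r(G)$ and $\sigma$ is not a lower element for $b_i$; while if $b_i\in S$ then $(\sigma\setminus\{b_i\})\cup\{b_1\}$ is connected of size $r$, hence lies in $\Ind_r(G)$, so $\sigma\setminus\{b_i\}$ was already removed at the first step and cannot pair with $\sigma$ at step $i$.

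The step needing care is (ii), for which I would prove by induction on $j$ the explicit description: $\sigma\in\Delta_j$ iff $\sigma\in\Ind_r(G)$, $b_1\notin\sigma$, for every $i\in[j]$ one has $a_i\in\sigma$ with the component of $a_i$ in $G[\sigma]$ of size exactly $r$, and for every $i\in[j]$ with $b_i\in\sigma$ that component meets $\{a_1,\dots,a_{i-1}\}$. The inductive step ($\Delta_{j-1}\rightsquigarrow\Delta_j$ via $b_j$) is carried out by examining the three positions of $b_j$ relative to a cell of $\Delta_{j-1}$ — absent, present and isolated, present on the component of $a_j$ — and tracking the size of that component. This is the main obstacle: the delicate point is the ``freed upper cell'' phenomenon, whereby a cell containing $b_j$ can survive to $\Delta_j$ because its would-be partner $\sigma\setminus\{b_j\}$ has already been removed at an earlier step, and this is precisely why the eventual count is $\binom{|L|-1}{r-n}$ rather than $\binom{|L|-n}{r-n}$. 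Once the description is in hand, $j=n$ forces $W\subseteq\sigma$ and hence $G[\sigma]$ connected, so the component conditions collapse and $\Delta_n$ is as displayed.

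All critical cells of $M$ are $(r-1)$-dimensional, so \cref{acyc4} gives $\Ind_r(G)\simeq\bigvee_{|\Delta_n|}\mathbb{S}^{r-1}$ with $|\Delta_n|=\binom{|L|-1}{r-n}=\binom{\sum_{i=1}^{n}l_i-1}{r-n}$. If $r<n$ then $W\subseteq\sigma$ is incompatible with $|\sigma|=r$, so $\Delta_n=\emptyset$; since the empty set is matched at the first step, \cref{acyc3} gives $\Ind_r(G)\simeq\{\mathrm{point}\}$ — which is also what the binomial coefficient records, since it vanishes for $r<n$ (and also for $r\ge|V(G)|$, consistently with \cref{observation:basic properties}$(i)$).
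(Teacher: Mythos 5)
Your proposal is correct and follows essentially the same route as the paper's proof: a sequence of element matchings using one distinguished leaf $b_i$ at each non-leaf vertex $a_i$, identification of the critical cells as the sets $W\cup S$ with $S$ an $(r-n)$-subset of the leaves avoiding $b_1$, and an application of \cref{acyc4} (together with the connectivity of $G[W]$ to dispose of the case $r<n$). The only difference is one of detail: where the paper dispatches the key claim that every critical cell contains $W$ with ``it is easy to observe that $\sigma\in N(b_{p,1})$,'' you spell out the inductive description of $\Delta_j$ and the ``freed upper cell'' subtlety explicitly, which is a faithful elaboration rather than a different argument.
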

\begin{proof} Arguments in this proof are similar to that of the proof of \cref{theorem:fully whiskered graph}. For $i \in [n]$, let $\{b_{i,1},b_{i,2},\dots,b_{i,l_i}\}$ denote the set of leaves adjacent to $a_i$. Let $\Delta_0=\Ind_r(G)$. We define a sequence of element matching on $\Delta_0$ using leaf vertices $\{b_{1,1},b_{2,1},\dots,b_{n,1}\}$ as in \Cref{eq:elementmatchingsequence}. 

\begin{claim}\label{claim: leaf at all vertex}
If $\sigma \in \Ind_r(G)$ and $W \nsubseteq \sigma$ then $\sigma \notin \Delta_n$, {\itshape i.e.} $\sigma$ is not a critical cell.
\end{claim}
Let $p=\text{min}\{i : a_i \notin \sigma\}$. It is easy to observe that, if $\sigma \in \Delta_{p-1}$ then $\sigma$ belongs to $N(b_{p,1})$, which implies that $\sigma \notin \Delta_{n}$. This prove \cref{claim: leaf at all vertex}.

Let $r< n$. Since $G$ is connected and $W$ is collection of all non-leaf vertices, $G[W]$ is connected subgraph of cardinality $n$. Therefore, if $\sigma \in \Ind_r(G)$ then $W \nsubseteq\sigma$.  Hence, result follows from \cref{claim: leaf at all vertex} and \cref{acyc4}. 

Now, assume that $r \geq n$. 
From the definition of $\Ind_r(G)$, it is easy to see that if $\sigma \in \Ind_r(G)$ and the cardinality of $\sigma$ is less than $r$ then $\sigma \in N(b_{1,1})$. 
Hence, for a simplex $\sigma \in \Delta_{n}$ the cardinality of $\sigma$ is at least $r$ and $b_{1,1} \notin \sigma$.  
Using \cref{claim: leaf at all vertex}, we see that $\sigma \in \Delta_n$ implies $W\subseteq \sigma$.  
Further, if $\sigma \in \Ind_r(G)$ and $W\subseteq \sigma$ then $\sigma \notin N(b_{i,1})$ for any $i\in [n]$. 
One concludes that $\sigma \in \Delta_n$ iff $W\subseteq \sigma$, $b_{1,1} \notin \sigma$ and $| \sigma| \geq r$. 
Moreover, $W\subseteq \sigma$ implies that $G[\sigma]$ is always connected. 
Therefore, the cardinality of the simplex $\sigma$ is exactly $r$. Combining all these arguments, we see that $\Delta_n$ is a set of $\displaystyle \binom{\sum_{i=1}^n l_i -1}{r-n}$ cells of dimension $r-1$. Thus the result follows from \cref{acyc4}.
\end{proof}

For $n \geq 1$, a {\itshape path graph} of length $n$, denoted $P_n$, is a graph with vertex set $V(P_n) = \{1, \ldots, n\}$ and  edge set $E(P_n) = \{(i,i+1) \ |  \ 1 \leq i \leq n-1\}$. For $n \geq 3$, a {\itshape cycle graph}, denoted $C_n$, is a graph with vertex set $V(C_n) = \{1, \ldots, n\}$ and  edge set $E(C_n) = \{(i,i+1) \ |  \ 1 \leq i \leq n-1\} \cup \{(1,n)\}$.
We can now compute $r$-independence complexes of almost all caterpillar graphs. A {\itshape caterpillar} graph is a path graph with some whiskers on vertices. 
\begin{definition}
Let $G$ be a graph with $V(G)=\{a_1,\dots,a_n\}$ and $L=\{ l_1,\dots,l_n\}$ be a set of $n$ non-negative integers. Define a graph $G^L$ with the following data:
\begin{equation*}
    \begin{split}
        V(G^L)& =V(G) \sqcup \bigsqcup\limits_{l_i>0}\{b_{i,1},\dots,b_{i,l_i}\}\\
        E(G^L)& = E(G) \sqcup \bigsqcup\limits_{l_i >0} \{(a_i,b_{i,j}): 1 \leq j \leq l_i \}\\
    \end{split}
\end{equation*}
\end{definition}

See \cref{fig:path and cycle star} for examples. Clearly, $P_n^L$ is a caterpillar graph.

\begin{figure}[H]
	\begin{subfigure}[]{0.45 \textwidth}
		\centering
		\begin{tikzpicture}
		[scale=0.5, vertices/.style={circle, draw = black!100, fill= gray!0, inner sep=0.5pt}] 
		\node (a) at (0,0)  {$a_1$};
		\node (b) at (4,0) {$a_2$};
		\node (c) at (8,0)  {$a_3$};
		\node (d) at (-1,3)  {$b_{1,1}$};
		\node (e) at (1,3)  {$b_{1,2}$};
		\node (f) at (4,3)  {$b_{2,1}$};
		\node (g) at (8,3)  {$b_{3,1}$};
		
		\foreach \from/\to in {a/b, a/d, a/e, b/c, b/f, c/g}
		\draw (\from) -- (\to);
		\end{tikzpicture}
		\caption{$P_{3}^{(2,1,1)}$}
	\end{subfigure}
	\begin{subfigure}[]{0.45 \textwidth}
		\centering
		\begin{tikzpicture}
		[scale=0.5, vertices/.style={circle, draw = black!100, fill= gray!0, inner sep=0.5pt}]
	    \node (a) at (0,0)  {$a_1$};
		\node (b) at (3,3) {$a_2$};
		\node (c) at (6,0)  {$a_3$};
		\node (d) at (-2,-1)  {$b_{1,1}$};
		\node (e) at (-2,1)  {$b_{1,2}$};
		\node (f) at (3,5)  {$b_{2,1}$};
		\node (g) at (8,3)  {$b_{3,1}$};
		
		\foreach \from/\to in {a/b, a/c, a/d, a/e, b/c, b/f, c/g}
		\draw (\from) -- (\to);
		\end{tikzpicture}
		\caption{$C_3^{(2,1,1)}$}
	\end{subfigure}
	\caption{} \label{fig:path and cycle star}
\end{figure}

\begin{corollary}
Given $L=(l_1,l_2,\dots,l_n)$ with $l_i > 0$ for every $i \in \{1,\dots,n\}$. Then,

$$\Ind_r(P_n^{L}) \simeq \Ind_r(C_n^{L}) \simeq
\begin{cases}\bigvee\limits_{\binom{\sum_{i=1}^n l_i -1}{r-n}} \mathbb{S}^{r-1}, & \mathrm{if\ }  r \geq n,\\
\{\mathrm{point}\}, & \mathrm{otherwise}.
\end{cases}$$
\end{corollary}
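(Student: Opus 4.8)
The plan is to deduce this from \cref{theorem:leaf at every vertex general graph}: once we know that the non-leaf vertices of $P_n^L$ and of $C_n^L$ are exactly $a_1,\dots,a_n$ and that $a_i$ carries $l_i>0$ leaves, the formula drops out immediately. So the whole proof reduces to checking these hypotheses, together with one degenerate small case.

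First I would note that $P_n^L$ and $C_n^L$ are connected, since their underlying graphs $P_n$ and $C_n$ are connected and every new vertex $b_{i,j}$ is adjacent to some $a_i$. Next, assume $n\geq 2$ in the path case and $n\geq 3$ in the cycle case. Because $l_i>0$ for all $i$, a degree count gives $\deg_{P_n^L}(a_1)=1+l_1\geq 2$, $\deg_{P_n^L}(a_n)=1+l_n\geq 2$, and $\deg_{P_n^L}(a_i)=2+l_i\geq 3$ for $1<i<n$; similarly $\deg_{C_n^L}(a_i)=2+l_i\geq 3$ for all $i$. On the other hand each $b_{i,j}$ has degree $1$. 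Hence in both graphs the set of all non-leaf vertices is precisely $W=\{a_1,\dots,a_n\}$, it has $n$ elements, and $a_i$ has $l_i>0$ leaves attached. Applying \cref{theorem:leaf at every vertex general graph} verbatim yields
$$\Ind_r(P_n^L)\simeq \Ind_r(C_n^L)\simeq \begin{cases}\bigvee\limits_{t}\mathbb{S}^{r-1}, & r\geq n,\\ \{\mathrm{point}\}, & \text{otherwise},\end{cases}\qquad t=\binom{\sum_{i=1}^{n}l_i-1}{r-n},$$
which is exactly the claimed homotopy type. In particular the answer depends only on $n$ and on $\sum_i l_i$, which explains why $P_n^L$ and $C_n^L$ have the same homotopy type.

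It remains to handle $n=1$, which is meaningful only for the path ($C_n$ requires $n\geq 3$). If $l_1\geq 2$ then $\deg(a_1)=l_1\geq 2$, so $a_1$ is the unique non-leaf vertex and the argument above applies unchanged. If $l_1=1$ then $P_1^L=K_2$: by \cref{observation:basic properties}$(ii)$ we have $\Ind_1(K_2)\simeq \mathbb{S}^0$, which matches $\binom{l_1-1}{1-1}=\binom{0}{0}=1$, and for $r\geq 2$ we have $r\geq |V(K_2)|$, so $\Ind_r(K_2)$ is contractible by \cref{observation:basic properties}$(i)$, matching $\binom{0}{r-1}=0$. There is no real obstacle in this proof; the entire content sits in \cref{theorem:leaf at every vertex general graph}, and the only subtlety is the degenerate graph $P_1^{(1)}=K_2$, for which the ``every non-leaf vertex carries a leaf'' hypothesis is vacuous and must be treated separately.
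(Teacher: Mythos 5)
Your proposal is correct and follows exactly the route the paper intends: the corollary is stated immediately after \cref{theorem:leaf at every vertex general graph} and is meant to be a direct application of it, which is what you do after verifying that the non-leaf vertices of $P_n^L$ and $C_n^L$ are precisely $a_1,\dots,a_n$ with $l_i>0$ pendant leaves each. Your separate treatment of the degenerate case $P_1^{(1)}=K_2$ (where the hypothesis of the theorem is vacuous) is a small but legitimate extra care that the paper glosses over.
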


\section{Higher Independence Complexes of cycle graphs}\label{sec_cycle}
Kozlov, in \cite{Koz07}, computed the homotopy type of $1$-independence complex of cycle graphs using discrete Morse theory. He proved the following result:

\begin{proposition}[{\cite[Proposition 11.17]{Koz07}}]
For any $n\geq 3$, we have
\[ \Ind_1(C_n) \simeq
\begin{cases}
\mathbb{S}^{k-1}\bigvee \mathbb{S}^{k-1}, & \mathrm{if~} n=3k,\\
\mathbb{S}^{k-1},  & \mathrm{if~} n=3k \pm 1.
\end{cases}
\]
\end{proposition}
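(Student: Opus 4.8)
The plan is to reprove Kozlov's formula for $\Ind_1(C_n)$ by constructing an explicit acyclic matching on the face poset and counting critical cells, exactly in the style used for the other results in the paper. Recall that $\Ind_1(C_n)$ is the complex whose faces are independent sets of $C_n$; its maximal faces are the maximal independent sets, which are the maximal ``gap'' selections around the cycle.

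First I would set up a sequence of element matchings using a carefully chosen vertex order around the cycle, say $\{1, 2, \dots\}$ with an eye toward deleting vertices three at a time. The key structural fact is that once we fix whether or not vertices $1$ and $2$ lie in a face, the remaining choices live inside the path $P_{n-3}$ obtained by removing $\{n, 1, 2\}$ (or a similar triple), so one can try to reduce to the known path computation in \cref{prop:perfect morse fuction on path graph}. Concretely, I would split $\Ind_1(C_n)$ according to the three cases: (a) faces containing vertex $1$, which forces vertices $2$ and $n$ out and leaves $\Ind_1(P_{n-3})$ on $\{3,\dots,n-1\}$; (b) faces not containing $1$ but containing $2$; (c) faces containing neither $1$ nor $2$. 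These pieces are arranged so that \cref{theorem:jacobs result of pataching two matchings} applies (no face of a ``later'' piece is contained in a face of an ``earlier'' one), and on each piece I would either run an element matching that leaves no critical cells or reduce to the path case.

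The main computational step is then to combine the contributions. Using the path formula $\Ind_1(P_m) \simeq \mathbb{S}^{k-1}$ when $m = 3k-1$ or $m=3k$ and contractible when $m = 3k+1$ (equivalently, the number of critical $(k-1)$-cells is $1$ in the first two residue classes and $0$ in the third), I would track, for each residue of $n \bmod 3$, how many critical cells survive in each of the pieces (a), (b), (c). Writing $n = 3k$, $n = 3k+1$, $n = 3k-1$, one checks that the surviving critical cells all sit in dimension $k-1$, with total count $2$ in the case $n = 3k$ and $1$ otherwise; then \cref{acyc4} (together with \cref{acyc3} to rule out a stray $0$-cell, noting $\emptyset$ is matched) gives the wedge of $(k-1)$-spheres as claimed.

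The hard part will be the bookkeeping needed to guarantee acyclicity of the patched matching and to make sure the element matchings on the pieces genuinely interlock — in particular verifying the ``$\tau \nsubseteq \sigma$'' hypothesis of \cref{theorem:jacobs result of pataching two matchings} for the chosen decomposition, and handling the small-$n$ base cases ($n = 3, 4, 5$) by hand where the path-reduction degenerates. An alternative, possibly cleaner, route is to build a single direct matching on $\Ind_1(C_n)$ by the standard Morse-matching recipe (match a face $\sigma$ with $\sigma \triangle \{v\}$ for the smallest admissible ``free'' vertex $v$, using the cyclic structure to define ``free''), and then simply exhibit the one or two critical faces; I expect the acyclicity verification there to again be the principal obstacle, since the cyclic symmetry means there is no global minimum vertex to break ties, unlike in the path case.
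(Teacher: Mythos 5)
First, a remark on the comparison: the paper does not actually prove this proposition --- it is quoted from Kozlov as motivation --- so the relevant benchmark is the paper's proof of its generalization, \cref{theorem:indr of cycle graph}, specialized to $d=3$. Your plan (decompose $\Ind_1(C_n)$ by a poset map recording membership of a few chosen vertices, reduce each fiber to the known path computation, patch with \cref{theorem:jacobs result of pataching two matchings} or \cref{theorem:patchwork theorem}) is exactly that argument in miniature, and your decomposition into the pieces (a), (b), (c), together with the ordering needed for the ``$\tau\nsubseteq\sigma$'' hypothesis, is sound.

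The genuine gap is the sentence ``one checks that the surviving critical cells all sit in dimension $k-1$, with total count $2$ in the case $n=3k$ and $1$ otherwise.'' This is false for the matching you describe when $n\equiv 2\pmod 3$. Take $n=8$, $k=3$: pieces (a) and (b) each reduce to $\Ind_1(P_5)$ and contribute one critical cell lifted to dimension $2$ (namely $\{1,4,7\}$ and $\{2,5,8\}$), while piece (c) is $\Ind_1(P_6)$ and contributes one critical cell $\{4,7\}$ of dimension $1$. So the patched matching leaves three critical cells in two consecutive dimensions; \cref{acyc4} does not apply, and \cref{acyc3} alone gives only a CW model (and hence the Euler characteristic), not the claimed single $\mathbb{S}^{2}$. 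The missing idea is a second round of cancellation: you must additionally match the leftover critical cell of piece (c) with the critical cell of piece (a) that contains it (here $\{4,7\}\leftrightarrow\{1,4,7\}$) and re-invoke \cref{theorem:jacobs result of pataching two matchings} to check that the enlarged matching is still acyclic. This is precisely the role of the auxiliary matching $M'$ and \cref{claim:for matching on inverse image of d} in the proof of \cref{theorem:indr of cycle graph}, and it is the crux of the argument rather than routine bookkeeping. Your counts for $n\equiv 0,1\pmod 3$ are correct as stated; once the cancellation step is added for $n\equiv 2\pmod 3$ and the degenerate cases $n=3,4$ are handled directly, the outline does go through.
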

In this section, we generalize this result and compute the homotopy type of $\Ind_r(C_n)$ for any $n \geq 3$ and $r\geq 1$. In particular, we define a perfect acyclic matching on $\Ind_{d-2}(C_n)$. We will use the following result, proved by Paolini and Salvetti in \cite{PS18}. 

\begin{theorem}[{\cite[Proposition 3.7]{PS18}}]\label{higher_ind_path} For $d\geq 3$, we have
\[\Ind_{d-2}(P_n) \simeq
\begin{cases} 
     \mathbb{S}^{dk-2k-1}, & \mathrm{if\ } n = dk \mathrm{\  or\ } n=dk-1;\\
     \{\mathrm{point}\}, & \mathrm{otherwise}.
   \end{cases}
\]
\end{theorem}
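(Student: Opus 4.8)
Set $r := d-2 \ge 1$ and label the vertices of $P_n$ by $1,2,\dots,n$. The plan is to exhibit one explicit (indeed perfect) acyclic matching on $\Ind_r(P_n)$ with at most one critical cell, assembled as a sequence of element matchings in the sense of \cref{proposition:sequence of element matching}. The vertices are processed in the order $v_0,v_1,v_2,\dots$, where $v_j := j(r+2)+1$, using only those $v_j$ with $v_j \le n$. The virtue of this choice is that at the moment $v_j$ is used it is an endpoint of the part of $P_n$ not yet consumed, so the pairing it induces is transparent: there $v_j$ has the single neighbour $v_j+1$.

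The technical core is a structural description of the uncancelled cells, proved by induction on the number of element matchings already performed. Call $B_i := \{\, i(r+2)+1,\ i(r+2)+2,\ \dots,\ (i+1)(r+2)\,\}$ the $i$-th block, and say a cell $\sigma$ is \emph{forced} on $B_i$ if $\sigma \cap B_i$ equals exactly the set of $r$ interior vertices $\{\, i(r+2)+2,\ \dots,\ i(r+2)+r+1\,\}$, i.e.\ the two endpoints of $B_i$ are omitted and the $r$ middle vertices present. The claim is that after the matchings along $v_0,\dots,v_{j-1}$ the surviving cells are exactly the $\sigma \in \Ind_r(P_n)$ that are forced on each of $B_0,\dots,B_{j-1}$ and whose restriction to the tail path on $\{\, j(r+2)+1,\ \dots,\ n\,\}$ is an arbitrary $r$-independent set. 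For the inductive step one checks, for the matching along $v_j$: every surviving $\sigma$ with $v_j \in \sigma$ is cancelled by deleting $v_j$; a surviving $\sigma$ with $v_j \notin \sigma$ escapes cancellation iff $\sigma \cup \{v_j\}$ is \emph{not} $r$-independent, and since $v_j$ meets only $v_j+1$ in the unconsumed path this occurs precisely when $\{\, v_j+1,\ \dots,\ v_j+r \,\} \subseteq \sigma$, in which case $r$-independence forces $v_j+r+1 = (j+1)(r+2)$ (if it exists) to lie outside $\sigma$; and, the right endpoint of $B_j$ being now omitted, $B_j$ is insulated from the tail, so the description reproduces itself one level down. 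The crucial consequence is that a completed block carries a \emph{unique} local configuration, which is exactly what bounds the number of critical cells.

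The process halts once the tail has $\rho := n \bmod (r+2)$ vertices, and the homotopy type is read off from $\rho$. If $1 \le \rho \le r$ the tail is a path on at most $r$ vertices, so its $r$-independence complex is a full simplex and the last element matching cancels everything; no critical cells remain, and since $\emptyset$ was paired with $\{v_0\} = \{1\}$ in the first step, \cref{acyc3} gives that $\Ind_r(P_n)$ is contractible. If $\rho = 0$, so $n = (r+2)k$ with $k = n/(r+2) \ge 1$, the tail is empty and the unique surviving cell is the union of the $k$ forced blocks, of cardinality $kr$, hence of dimension $kr-1$. If $\rho = r+1$, so $n = (r+2)k - 1$ with $k = (n+1)/(r+2) \ge 1$, the tail is a path on $r+1$ vertices and one further element matching along its first vertex leaves a single surviving cell there (consistently with $\Ind_r(P_{r+1}) \simeq \mathbb{S}^{r-1}$, \cref{observation:basic properties}$(ii)$), which together with the $k-1$ completed blocks has dimension $(k-1)r + r - 1 = kr-1$. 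In both of these cases the matching has a single critical cell, of dimension $kr-1$, and $\emptyset$ is paired, so \cref{acyc3} (or \cref{acyc4}) yields $\Ind_r(P_n) \simeq \mathbb{S}^{kr-1}$; substituting $r = d-2$, the exponent $kr-1$ equals $dk - 2k - 1$, as stated. The cases $n \le r+1$ are just the $j=0$ instance and agree with \cref{observation:basic properties}.

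I expect the main obstacle to be making the structural claim of the second paragraph fully rigorous, that is, controlling exactly which cells are critical after each element matching. Two points want care. First, the posets $\Delta_i$ supplied by \cref{proposition:sequence of element matching} are merely subposets of the face poset of $\Ind_r(P_n)$, not independence complexes of graphs in their own right, so the induction must be carried through the explicit ``forced blocks $+$ free tail'' description rather than by literally reidentifying $\Delta_i$ with $\Ind_r$ of a shorter path. Second, one must verify cleanly that a completed block decouples from everything to its right; this is the feature that both makes the inductive step self-contained and pins down the unique local pattern, hence the single critical cell.
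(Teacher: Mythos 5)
Your proof is correct, and it in fact establishes the stronger statement that the paper records separately as \cref{prop:perfect morse fuction on path graph}: a perfect acyclic matching whose unique critical cell is $\bigsqcup_{i=0}^{k-1}\{di+2,\dots,di+d-1\}$ when $n=dk$ or $dk-1$, and no critical cell otherwise. The route is genuinely different in one respect. The paper does not prove \cref{higher_ind_path} directly at all --- it cites \cite{PS18} --- and its proof of \cref{prop:perfect morse fuction on path graph} is a two-stage construction: it first takes the Paolini--Salvetti matching as a black box (a perfect matching on all cells meeting $\{d,2d,\dots,dk\}$), and only then runs element matchings at $1, d+1,\dots,d(k-1)+1$ on the leftover cells, gluing the two stages with \cref{theorem:jacobs result of pataching two matchings}. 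You run a single sequence of element matchings at the same vertices $1, d+1, 2d+1,\dots$ on the \emph{whole} complex; your block analysis shows this one pass already cancels every cell containing some multiple of $d$ (such a $\sigma$ cannot be forced on the preceding block, so $\sigma\cup\{v_j\}$ stays $(d-2)$-independent and $\sigma$ is matched), so the first stage of the paper's argument is subsumed. What this buys is self-containedness --- no reliance on the matching from \cite{PS18} --- and acyclicity for free from \cref{proposition:sequence of element matching}, with no patchwork step. The price is exactly the bookkeeping you identify: one must carry the ``forced blocks plus free tail'' description of $\Delta_j$ through the induction, including the observation that the forced omission of $jd$ insulates each completed block from the tail; your verification of that step is sound, and the terminal case analysis on $\rho=n\bmod d$ (contractible for $1\le\rho\le d-2$, a single critical cell of dimension $dk-2k-1$ for $\rho=0$ or $\rho=d-1$) matches the claimed homotopy types.
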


To make our computations of $\Ind_{d-2}(C_n)$ easier, we first improve the acyclic matching defined by Paolini and Salvetti on $\Ind_r(P_n)$, and get a perfect acyclic matching on $\Ind_{d-2}(P_n)$.

\begin{proposition}\label{prop:perfect morse fuction on path graph}
There exists a perfect acyclic matching on $\Ind_{d-2}(P_{n})$. 
In particular, if $n=dk$ or $dk-1$ then the only critical cell is given by $ \bigsqcup\limits_{i=0}^{k-1}\{di+2,\dots,di+d-1\}$. 
Moreover, for other values of $n$ there are no critical cells.
\end{proposition}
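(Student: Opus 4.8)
\emph{The plan} is to build the matching as a sequence of element matchings in the sense of \cref{proposition:sequence of element matching}, using the vertices $1,\,d+1,\,2d+1,\dots$ of $P_n$ in this order (that is, $x_j=(j-1)d+1$, taken only while $x_j\le n$), and then to read off the critical cells by tracking the surviving simplices round by round. Throughout, the working description of the complex is that $\sigma\subseteq[n]$ is a face of $\Ind_{d-2}(P_n)$ precisely when $\sigma$ contains no $d-1$ consecutive integers.

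\emph{The basic step} is to analyse a single element matching using the left endpoint $1$ of a path $P_m$. A face $\sigma$ with $1\notin\sigma$ is unmatched exactly when $\sigma\cup\{1\}\notin\Ind_{d-2}(P_m)$; as $1$ is an endpoint, the only run of $\sigma\cup\{1\}$ that can be too long starts at $1$, so this occurs iff $\{2,\dots,d-1\}\subseteq\sigma$, and then $\sigma\in\Ind_{d-2}(P_m)$ forces $d\notin\sigma$. Thus the unmatched cells form $U_m:=\{\sigma:1\notin\sigma,\ \{2,\dots,d-1\}\subseteq\sigma,\ d\notin\sigma,\ \sigma\in\Ind_{d-2}(P_m)\}$, which is empty when $m\le d-2$, equals $\{\{2,\dots,d-1\}\}$ when $m=d-1$, and for $m\ge d$ is carried by $\sigma\mapsto\{x-d:x\in\sigma,\ x\ge d+1\}$ onto an order-isomorphic copy of $\Ind_{d-2}(P_{m-d})$ (delete the fixed block $\{2,\dots,d-1\}$ and shift the remainder onto $\{1,\dots,m-d\}$); this raises cardinalities by $d-2$ and sends the new left endpoint to the image of $d+1$.

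\emph{Iterating} this is the core of the argument. I would prove by induction on $j$ that the set $\Delta_j$ of cells surviving the first $j$ rounds is exactly the collection of faces containing the forced block union $B_j:=\bigsqcup_{i=0}^{j-1}\{di+2,\dots,di+d-1\}$ and disjoint from $\{1\}\cup\bigcup_{i=0}^{j-1}\{di+1,\,di+d\}$ (entries exceeding $n$ imposing no condition), and moreover that $\sigma\mapsto\sigma\setminus B_j$ is an order isomorphism from $\Delta_j$ to $\Ind_{d-2}(P_{n-jd})$ realized on the sub-path $\{jd+1,\dots,n\}$; the inductive step is precisely the basic step applied to the left endpoint $jd+1$ of that sub-path. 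Granting this, \cref{proposition:sequence of element matching} makes $\bigsqcup_j M(x_j)$ an acyclic matching on $\Ind_{d-2}(P_n)$, and I would conclude by cases. If $n=dk$, the process stops after round $k$ (the next vertex would be $n+1$), the free sub-path is empty, and $\Delta_k=\{B_k\}$: one critical cell of dimension $|B_k|-1=k(d-2)-1=dk-2k-1$. If $n=dk-1$, the process still runs through round $k$ with $\Delta_{k-1}\cong\Ind_{d-2}(P_{d-1})$, whose endpoint matching leaves the single cell $\{2,\dots,d-1\}$; translated back this is again $B_k$, one critical cell of dimension $dk-2k-1$. For every other $n$, writing $n=dk+m$ with $k\ge 0$ and $1\le m\le d-2$, we get $\Delta_k\cong\Ind_{d-2}(P_m)$, a full simplex on at most $d-2$ vertices, whence the last endpoint matching matches everything and $\Delta_{k+1}=\emptyset$: no critical cells. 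In the first two cases all critical cells sit in one dimension, so \cref{acyc4} gives $\Ind_{d-2}(P_n)\simeq\bigvee\mathbb{S}^{dk-2k-1}$; by \cref{higher_ind_path} this wedge contains exactly one sphere, so the unique critical cell generates the nonzero top homology and the matching is perfect. In the remaining case there are no critical cells, hence the matching is vacuously perfect; and as $\emptyset$ is paired with $\{1\}$ in round $1$, \cref{acyc3} also recovers $\Ind_{d-2}(P_n)\simeq\{\mathrm{point}\}$.

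\emph{The main obstacle} is the bookkeeping in the inductive step: one must check that ``matched inside $\Delta_j$'' and ``matched inside the isomorphic copy of $\Ind_{d-2}(P_{n-jd})$'' mean the same thing, i.e.\ that restricting to the subposet of surviving cells neither creates nor destroys any matches. This is exactly where the explicit list of forced and forbidden vertices earns its keep, together with the remark that the forced block $B_j$ is always separated from the free part of a surviving simplex by the missing vertex $jd$, so it cannot affect any component condition on the sub-path.
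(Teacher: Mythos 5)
Your proof is correct, but it takes a genuinely more self-contained route than the paper's. The paper does not run the element matchings on the whole complex: it first invokes the acyclic matching $\mathcal{M}$ that Paolini and Salvetti already constructed on $\Ind_{d-2}(P_n)$, whose critical cells are exactly the faces avoiding $\{d,2d,\dots,dk\}$, then runs the element matchings at $1,d+1,\dots,d(k-1)+1$ only on that residual family, and finally glues the two matchings together with \cref{theorem:jacobs result of pataching two matchings}. You instead build the entire matching in one pass as $\bigsqcup_j M(x_j)$ on the full complex via \cref{proposition:sequence of element matching}, replacing the appeal to the Paolini--Salvetti construction by your explicit inductive description of the surviving sets $\Delta_j$ (the forced blocks $B_j$ together with the forbidden vertices $di+1$ and $di+d$), which is precisely the bookkeeping the paper avoids by outsourcing its first stage; your check that adding or deleting $jd+1$ preserves membership in $\Delta_j$, and that the block $B_j$ is insulated from the free sub-path by the missing vertex $jd$, is the point that makes the induction go through, and it does. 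What you gain is independence from the external matching and from the patchwork step, plus a transparent identification of the unique critical cell; what the paper gains is brevity, since the reduction to faces avoiding $\{d,2d,\dots,dk\}$ comes for free from the cited result. One small remark: the appeal to \cref{higher_ind_path} at the end to count spheres is unnecessary and slightly circular in spirit, since \cref{acyc3} already tells you the complex is built from one $0$-cell and the single critical cell, hence is a single sphere of the stated dimension, which yields perfectness directly.
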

\begin{proof}
Let $n=dk-t$ for some $t \in \{0,1,\dots,d-1\}$, let $\Delta=\{\sigma \in \Ind_{d-2}(P_{n}) : \sigma \cap\{d,2d,\dots,dk\} \neq \emptyset\}$ and let $\Delta_0=\Ind_{d-2}(P_{n}) \setminus \Delta$. In {\cite[Proposition 3.7]{PS18}}, Paolini and Salevtti constructed an acyclic matching $\mathcal{M}$ on $\Ind_{d-2}(P_{n})$ with $\Delta_0$ as the set of critical cells. Here, we construct an acyclic matching on $\Delta_0$ using vertices $\{1,d+1,2d+1,\dots,d(k-1)+1\}$ as in \Cref{eq:elementmatchingsequence}.
From \cref{proposition:sequence of element matching}, $\mathcal{M}^\prime = \bigsqcup\limits_{i=0}^{k-1}M(di+1)$ is an acyclic matching on $\Delta_0$ with $\Delta_k$ as the set of critical cells. Clearly, if $n=dk$ or $dk-1$ then $\Delta_k= \{\sigma\}$, where $\sigma=\bigsqcup\limits_{i=0}^{k-1}\{di+2,\dots,di+d-1\}$. Further, if  $n\neq dk, dk-1$ then $N(d(k-1)+1)=\Delta_{k-1}$. Using \cref{theorem:jacobs result of pataching two matchings}, we get that $\mathcal{M} \sqcup \mathcal{M}^\prime$ is an acyclic matching on $\Ind_{d-2}(P_{n})$ with $\Delta_k$ as set of critical cells. This completes the proof of \cref{prop:perfect morse fuction on path graph}.
\end{proof}

Following are some immediate corollaries of \cref{prop:perfect morse fuction on path graph}.

\begin{corollary}\label{cor:disjoint_union_of_paths_2}
Let $d\geq 3$ and $G$ be disjoint union of $m$ path graphs of lengths $d$ or $d-1$. Then there exists an acyclic matching on $\Ind_{d-2}(G)$ with exactly one critical cell of dimension $0$ and one of dimension $(d-3)m+m-1=dm-2m-1$.
\end{corollary}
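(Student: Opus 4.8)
The plan is to combine the perfect acyclic matching from \cref{prop:perfect morse fuction on path graph} on each path component with the join/disjoint-union structure of higher independence complexes recorded in \cref{observation:basic properties}$(iv)$, together with the patchwork-type gluing of \cref{theorem:jacobs result of pataching two matchings} (or directly \cref{theorem:patchwork theorem}). Write $G = P_{n_1}\sqcup \cdots \sqcup P_{n_m}$ with each $n_j \in \{d-1, d\}$. By \cref{observation:basic properties}$(iv)$ we have $\Ind_{d-2}(G) \simeq \Ind_{d-2}(P_{n_1}) \ast \cdots \ast \Ind_{d-2}(P_{n_m})$, and by \cref{prop:perfect morse fuction on path graph} each factor $\Ind_{d-2}(P_{n_j})$ carries a perfect acyclic matching with a single critical cell of dimension $d-3$ (the set $\bigsqcup_{i=0}^{0}\{di+2,\dots,di+d-1\} = \{2,\dots,d-1\}$ when $k=1$, which indeed has $d-2$ elements hence dimension $d-3$), plus the empty set if it gets paired.

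First I would make the matching on $\Ind_{d-2}(G)$ explicit on the face poset directly, rather than going through the join: a face of $\Ind_{d-2}(G)$ is a disjoint union $\sigma = \sigma_1 \sqcup \cdots \sqcup \sigma_m$ with $\sigma_j \subseteq V(P_{n_j})$ each a face of $\Ind_{d-2}(P_{n_j})$ (disjointness of the components is automatic since the $P_{n_j}$ share no edges). Order the components and apply the matching $\mathcal{M}_j$ of \cref{prop:perfect morse fuction on path graph} to the "first coordinate" in which $\sigma$ is not yet critical; formally, one uses the map $\varphi$ sending $\sigma$ to the largest index $j$ such that $\sigma_1,\dots,\sigma_{j-1}$ are all critical for their respective matchings (i.e. each equals $\{2,\dots,d-1\}$ shifted appropriately — here just $\{2,\dots,d-1\}$), and matches within the fiber using $\mathcal{M}_j$ acting on $\sigma_j$. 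Since each $\mathcal{M}_j$ is acyclic and these are applied in a fixed coordinate order, \cref{theorem:jacobs result of pataching two matchings} (applied inductively, splitting off one component at a time) shows the union is an acyclic matching on $\Ind_{d-2}(G)$.

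Next I would identify the critical cells: a face survives all matchings iff every $\sigma_j$ is the critical cell $\{2,\dots,d-1\}$ of $\Ind_{d-2}(P_{n_j})$. There is exactly one such face, of cardinality $(d-2)m$, hence dimension $(d-3)m + (m-1) = dm-2m-1$; and the empty set is also critical (it is never paired, since in each single-path matching from \cref{prop:perfect morse fuction on path graph} the empty set lies in $\Delta_0$ and then gets matched to $\{1\}$ only in the $k=1$ subcase — I should double-check here whether $\emptyset$ is paired or critical, but in either case \cref{acyc3} accounts for it by the parenthetical "plus a single $0$-dimensional cell", so we may simply record one critical $0$-cell). This yields an acyclic matching with exactly one critical cell in dimension $0$ and one in dimension $dm-2m-1$, as claimed.

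The main obstacle I anticipate is the bookkeeping of the empty set and of the "already critical" condition when gluing: one must be careful that the matching $\mathcal{M}_j$ on the fiber really does act only on the $j$-th coordinate and leaves earlier coordinates fixed (so the hypotheses of \cref{theorem:jacobs result of pataching two matchings}, namely that no face in the "later" family is contained in a face of the "earlier" family, are met when we peel off components one at a time). A cleaner alternative that sidesteps the empty-set subtlety is to invoke \cref{lemma:join of spheres} together with \cref{prop:perfect morse fuction on path graph} and \cref{acyc3} to conclude $\Ind_{d-2}(G) \simeq \mathbb{S}^{dm-2m-1}$ abstractly, and then note that any simplicial complex homotopy equivalent to a single sphere of dimension $\geq 1$ admits, via \cref{acyc3} run in reverse or via Forman's theory, a matching with the stated critical cells; but the direct patchwork construction above is the self-contained route and is the one I would write up.
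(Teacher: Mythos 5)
Your construction is correct and is essentially the argument the paper intends: the corollary is stated there as an immediate consequence of \cref{prop:perfect morse fuction on path graph}, obtained exactly as you do by patching the perfect matchings on the components of the join $\Ind_{d-2}(P_{n_1})\ast\cdots\ast\Ind_{d-2}(P_{n_m})$ coordinate by coordinate (using that each critical cell $\{2,\dots,d-1\}$ is a maximal face, so the fiber map is order-preserving). The only point worth noting is that in your matching the empty set is paired and there is literally no critical $0$-cell --- the ``critical cell of dimension $0$'' in the statement is just the extra $0$-cell supplied by the parenthetical clause of \cref{acyc3} --- a bookkeeping convention you already flag and which the paper itself treats loosely when it later cites this corollary.
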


\begin{corollary}\label{cor:disjoint_union_of_paths_3}
Let $d\geq 3$ and $G$ be disjoint union of $m$ path graphs. If any connected component of $G$ has length less than $d-1$ or greater than $d$ and less than $2d-2$, then there exists an acyclic matching on $\Ind_{d-2}(G)$ with no critical cell.
\end{corollary}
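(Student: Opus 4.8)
The plan is to peel off one ``good'' component and extend its matching across a join. By hypothesis $G$ has a connected component $H$ whose length $\ell$ satisfies $\ell<d-1$ or $d<\ell<2d-2$; in either case $\ell\ne dk$ and $\ell\ne dk-1$ for every $k\ge 1$ (writing $\ell=dk-t$, the first range forces $k=1,\ t=d-\ell\ge 2$, the second forces $k=2,\ t=2d-\ell\ge 3$), so \cref{prop:perfect morse fuction on path graph} supplies an acyclic matching $\mathcal{M}_H$ on $\Ind_{d-2}(H)$ with \emph{no} critical cells at all; in particular the empty face is matched. If $G$ is connected this already finishes the proof, so assume $G=H\sqcup H'$ with $H'$ the nonempty union of the other components.

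Next I would use that for disjoint graphs the higher independence complex is literally a join. A set $A\subseteq V(G)$ is $(d-2)$-independent in $G$ if and only if $A\cap V(H)$ and $A\cap V(H')$ are $(d-2)$-independent in $H$ and in $H'$ respectively, because the components of $G[A]$ are exactly those of $H[A\cap V(H)]$ together with those of $H'[A\cap V(H')]$. Hence $\Ind_{d-2}(G)=\Ind_{d-2}(H)\ast\Ind_{d-2}(H')$ as simplicial complexes, and every face $\rho$ of $\Ind_{d-2}(G)$ is uniquely $\rho=\sigma\sqcup\tau$ with $\sigma=\rho\cap V(H)\in\Ind_{d-2}(H)$ and $\tau=\rho\cap V(H')\in\Ind_{d-2}(H')$.

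Finally I would invoke the Patchwork theorem (\cref{theorem:patchwork theorem}) for the order-preserving projection $\varphi\colon\Ind_{d-2}(G)\to\Ind_{d-2}(H')$, $\varphi(\sigma\sqcup\tau)=\tau$. For each $\tau\in\Ind_{d-2}(H')$ the fiber $\varphi^{-1}(\tau)=\{\sigma\sqcup\tau : \sigma\in\Ind_{d-2}(H)\}$ is poset-isomorphic to the face poset of $\Ind_{d-2}(H)$ via $\sigma\mapsto\sigma\sqcup\tau$, so $\mathcal{M}_H$ transports to an acyclic matching on that fiber with no critical cells. Patching these together yields an acyclic matching on $\Ind_{d-2}(G)$ in which $\sigma\sqcup\tau$ is critical exactly when $\sigma$ is critical for $\mathcal{M}_H$ — that is, never. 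I do not anticipate a genuine obstacle; the only points needing care are the bookkeeping that $\Ind_{d-2}(G)=\Ind_{d-2}(H)\ast\Ind_{d-2}(H')$ is an equality of complexes (so the fibers of $\varphi$ are honest copies of the face poset of $\Ind_{d-2}(H)$) and the observation that $\mathcal{M}_H$ leaves nothing unmatched, the empty face included, since otherwise exactly one critical cell would survive in each fiber.
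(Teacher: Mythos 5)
Your proposal is correct and is exactly the argument the paper intends: the paper states this as an immediate corollary of \cref{prop:perfect morse fuction on path graph}, relying on the fact that the distinguished component's length is never $dk$ or $dk-1$, so its perfect matching has no critical cells, and then extending across the join decomposition $\Ind_{d-2}(G)=\Ind_{d-2}(H)\ast\Ind_{d-2}(H')$ of \cref{observation:basic properties}$(iv)$ via the Patchwork theorem. Your write-up just makes explicit the bookkeeping (uniqueness of $\ell=dk-t$, the fiberwise transport of the matching, and that the empty face is matched) that the paper leaves unsaid.
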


From \cref{observation:basic properties}$(i)$ and $(ii)$, we get that $\Ind_{d-2}(C_n) \simeq \{\text{point}\}$ for all $n\leq d-2$ and $\Ind_{d-2}(C_{d-1})$ $ \simeq \mathbb{S}^{d-3}$. We now determine the homotopy type of $\Ind_{d-2}(C_{n})$ for $n\geq d$. The idea of this proof is to define an acyclic matching of subsets of the face poset of $\Ind_r(C_n)$ and then use \cref{theorem:patchwork theorem}.

\begin{theorem}\label{theorem:indr of cycle graph}
For $n\geq d \geq 3$, we have
\[\Ind_{d-2}(C_n) \cong
\begin{cases} 
     \displaystyle \bigvee_{d-1}\mathbb{S}^{dk-2k-1}, & \mathrm{if\ } n =dk;\\
     \mathbb{S}^{dk-2k-1}, & \mathrm{if\ } n = dk+ 1;\\
     \mathbb{S}^{dk-2k}, & \mathrm{if\ } n = dk+2;\\
     \vdots & \vdots\\
     \mathbb{S}^{dk-2k+d-3}, & \mathrm{if\ } n = dk+(d-1).
   \end{cases}
\]
\end{theorem}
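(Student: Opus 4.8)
The plan is to build a (perfect) acyclic matching on the face poset of $\Ind_{d-2}(C_n)$ by recording how a face meets a single distinguished vertex and then reducing each resulting fiber to a path, where \cref{prop:perfect morse fuction on path graph} applies. Fix the vertex $1$ of $C_n$. First I would split the face poset as $\mathcal{A}\sqcup\mathcal{B}$ with $\mathcal{A}=\{\sigma:1\notin\sigma\}$ and $\mathcal{B}=\{\sigma:1\in\sigma\}$; since $\tau\nsubseteq\sigma$ whenever $\tau\in\mathcal{B}$ and $\sigma\in\mathcal{A}$, by \cref{theorem:jacobs result of pataching two matchings} it is enough to match $\mathcal{A}$ and $\mathcal{B}$ separately. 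The poset $\mathcal{A}$ is precisely the face poset of $\Ind_{d-2}(P_{n-1})$ (the path on $\{2,\dots,n\}$), so \cref{prop:perfect morse fuction on path graph} furnishes a perfect matching on it, with one critical cell if $n-1\equiv 0$ or $-1\pmod d$ and none otherwise.

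For $\mathcal{B}$ I would invoke the Patchwork Theorem along the order-preserving map $\varphi$ that sends $\sigma\in\mathcal{B}$ to its \emph{run} $R(\sigma)$, the maximal block of consecutive vertices of $\sigma$ on the cycle that contains $1$, the target being the poset of all such runs ordered by inclusion; $\varphi$ is order preserving because $\sigma\subseteq\sigma'$ forces the component of $1$ in $C_n[\sigma]$ to lie inside that of $\sigma'$. A run $R$ of size $\ell$ has $1\le\ell\le d-2$ (larger blocks violate $(d-2)$-independence) and, since $\ell<n$, is flanked by two distinct vertices which must be absent from every $\sigma\in\varphi^{-1}(R)$; deleting $R$ together with these two vertices turns the rest of the cycle into a path on $n-\ell-2$ vertices. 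Thus $\varphi^{-1}(R)$ is isomorphic, up to a shift of dimension by $\ell$, to the face poset of $\Ind_{d-2}(P_{n-\ell-2})$, and I would place the matching of \cref{prop:perfect morse fuction on path graph} on each such fiber. Patching all of this together with the matching on $\mathcal{A}$ gives an acyclic matching on $\Ind_{d-2}(C_n)$.

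It then remains to count critical cells. There are exactly $\ell$ runs of size $\ell$ through $1$, and the fiber of each contributes a single critical cell precisely when $n-\ell-2\equiv 0$ or $-1\pmod d$, of dimension $(dk'-2k'-1)+\ell$ for the appropriate $k'$. Sorting by the residue of $n$ mod $d$: for $n=dk$ the survivors are the critical cell of $\Ind_{d-2}(P_{dk-1})$ and the $d-2$ cells from runs of size $d-2$, all of dimension $dk-2k-1$, so \cref{acyc4} gives $\bigvee_{d-1}\mathbb{S}^{dk-2k-1}$; for $n=dk+1$ (resp.\ $n=dk+2$) exactly one fiber contributes, giving $\mathbb{S}^{dk-2k-1}$ (resp.\ $\mathbb{S}^{dk-2k}$). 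The hard part is the range $3\le j\le d-1$ with $n=dk+j$: here the count leaves $j-2$ critical cells in dimension $dk-2k+j-3$ and $j-1$ in dimension $dk-2k+j-2$, so the matching as described is not yet perfect. To finish, one must pair each of the $j-2$ lower cells with one of the upper ones; I would achieve this by refining the patchwork — enlarging the indexing poset so that a run of size $j-2$ and a compatible run of size $j-1$ lie in one fiber, or adjusting the fiber matchings so that critical cells of consecutive run-sizes become matchable — so that a single critical cell remains, in dimension $dk-2k+j-2$, and then apply \cref{acyc4}. (Alternatively, the unrefined matching already exhibits $\Ind_{d-2}(C_n)$ as a CW complex with cells in only these two dimensions, so a reduced-Euler-characteristic count plus the vanishing of the lower homology forces a single top sphere.) Making this cancellation precise is the technical heart of the argument.
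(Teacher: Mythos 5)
Your decomposition is essentially the paper's in different clothing: the paper fibres $\Ind_{d-2}(C_n)$ over a chain using the vertices $d,2d,\dots,dk$ and then stratifies the remaining fibre $\phi^{-1}(e_d)$ by the connected component of the distinguished vertex, which is exactly your stratification of $\mathcal{B}$ by runs; your fibre identifications with shifted copies of $\Ind_{d-2}(P_{n-\ell-2})$ and your critical-cell census (one cell from $\mathcal{A}$ and $\ell$ runs of each size $\ell$, contributing precisely when $n-\ell-2\equiv 0,-1 \pmod d$) agree with the paper's count, including the $d-1$ equidimensional cells for $n=dk$ and the single cell for $n=dk+1, dk+2$.

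The genuine gap is exactly where you say it is, and it is not a removable formality: for $n=dk+j$ with $3\le j\le d-1$ you are left with $j-2$ critical cells in dimension $dk-2k+j-3$ and $j-1$ in dimension $dk-2k+j-2$, and neither of your two proposed finishes is carried out. The first (``refine the patchwork or adjust the fibre matchings'') is what the paper actually does, but it requires real work that you have deferred: one must exhibit, for each lower critical cell $\gamma$ supported on a run of size $j-2$, a specific upper critical cell of the form $\gamma\cup\{v\}$ with $v$ the vertex extending the run on one side (this uses the fact that the path critical cell of $P_{dk'}$ and of $P_{dk'-1}$, read from the same starting vertex, are literally the same set, so the ``$\beta$-part'' is unchanged when the run grows), define the extra matching $M'$ on these pairs, and then verify via \cref{theorem:jacobs result of pataching two matchings} that $M\sqcup M'$ is still acyclic --- the paper's \cref{claim:for matching on inverse image of d} is devoted to precisely this. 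Your parenthetical alternative is circular: \cref{acyc3} does give a CW model with cells only in dimensions $q$ and $q+1$ (plus a point), and the Euler characteristic is consistent with a single $(q+1)$-sphere, but to conclude you must know $H_q=0$, which is equivalent to the cancellation you are trying to avoid; the only connectivity you have for free is $(d-4)$-connectedness of $\Ind_{d-2}$, far below $q=dk-2k+j-3$ in general (and for $d=3,4$ you would additionally need to address simple connectivity before invoking a Hurewicz-type argument). So the proposal is a correct reduction of the theorem to the two-dimension cancellation, but the cancellation itself --- the technical heart, as you note --- is missing.
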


\begin{proof}
In this proof, we assume that the vertices of $C_n$ are labeled $1,2,\dots,n$ in anti-clockwise direction. 
Let $k$ denote the maximal integer such that $dk \leq n$. Furthermore, let $E$ be a chain with $k+1$ elements labeled as follows:
$$e_d > e_{2d} > \cdots  > e_{dk} > e_{r}.$$ 

Let $\cF(\K)$ denote the set of faces of a simplicial complex $\K$. We define a map 
\begin{equation}\label{eq:definition of map phi cycle graph} 
\phi: \cF(\Ind_{d-2}(C_n)) \rightarrow E
\end{equation} 
by the following rule. The simplices that contain the vertex labeled $d$ get mapped to $e_d$; the simplices that do not contain the vertex labeled $d$, but contain the vertex labeled $2d$ get mapped to $e_{2d}$; the simplices that do not contain the vertices labeled $d$ and $2d$, but contain the vertex labeled $3d$ get mapped to $e_{3d}$; and so on. 
Finally, the simplices that does not contain any of the vertices labeled $d, 2d,\dots , dk$ all get mapped to $e_r$.

Clearly, the map $\phi$ is order-preserving, since if one takes a larger simplex, it will have more vertices, and the only way its image may change is to go up when a new element from the set $\{d, 2d, \dots , dk\}$ is added and is smaller than the previously smallest one.

Let us now define acyclic matchings on the preimages of elements of $E$ under the map $\phi$. We split our argument into cases.

\textbf{Case 1:} We first consider the preimages $\phi^{-1}(e_{2d})$ through $\phi^{-1}(e_{dk})$. Let $t$ be an integer such that $2 \leq t \leq k$. The preimage $\phi^{-1}(e_{dt})$ consists of all simplices $\sigma$ such that $d, 2d, \dots , d(t-1) \not \in \sigma$, while $dt \in \sigma$. Since $\sigma \in \Ind_{d-2}(C_n)$, $\{dt-1,dt-2,\dots,dt-(d-2)\} \nsubseteq \sigma$. This means that the pairing $\sigma \leftrightarrow \sigma \cup \{dt-(d-1)\}$ provides a well-defined matching, which is acyclic from \cref{lemma:element matching}.

\textbf{Case 2:} Next, we consider the preimage $\phi^{-1}(e_d)$. For $\sigma \in \Ind_{d-2}(C_n)$, let conn$_d(\sigma)$ is the number of vertices of connected component of $C_n[\sigma]$ containing vertex labeled $d$. We define a map $\psi: \phi^{-1}(e_d) \rightarrow \{c_1 < c_2 < \dots < c_{d-2}\}$
\[\psi(\sigma)=c_{\mathrm{conn_d}(\sigma) }\]
Clearly, $\psi$ is a poset map and for $i \in \{1,\dots,d-2\}$, if $\sigma \in \psi^{-1}(c_i)$ then cardinality of $\sigma$ is at least $i$.

For $t \geq 1$, let $P_t^{\{i+1,\dots,i+t\}}$ denote the path graph of length $t$ whose vertices are labeled as $i+1,i+2,\dots,i+t$ (see \cref{fig:example of path graph}). 

\begin{figure}[H]
		\centering
		\begin{tikzpicture}
  [scale=0.4, vertices/.style={draw, fill=black, circle, inner sep=1.0pt}]
        \node[vertices, label=above:{$i+1$}] (v1) at (0,0)  {};
		\node[vertices, label=above:{$i+2$}] (v2) at (6,0)  {};
		\node[vertices, label=above:{$i+t-2$}] (vn2) at (10,0)  {};
		\node[vertices, label=above:{$i+t-1$}] (vn1) at (16,0)  {};
		\node[vertices, label=above:{$i+t$}] (vn) at (22,0)  {};
		\node[vertices,inner sep=0.3pt] (d1) at (7.5,0)  {};
		\node[vertices,inner sep=0.3pt] (d2) at (8,0)  {};
		\node[vertices,inner sep=0.3pt] (d3) at (8.5,0)  {};
		
\foreach \to/\from in {v1/v2,vn2/vn1,vn1/vn}
\draw [-] (\to)--(\from);
\end{tikzpicture}
	\caption{$P_t^{\{i+1,\dots,i+t\}}$} \label{fig:example of path graph}
\end{figure}
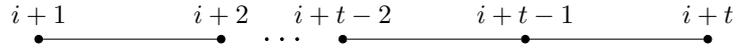

We now define a matching on $\phi^{-1}(e_d)$ in $d-2$ steps as follows.

{\bfseries Step $1$:} 
 For $p\geq 1$, it is clear that the $p$-simplices of $\psi^{-1}(c_1)$ are in 1-1 correspondence with the $(p-1)$-simplices of $\Ind_{d-2}(P_{n-3}^{\{d+2,\dots,n,1,\dots,d-2\}})$ with one extra simplex of dimension $0$, which is $\{d\}$. Using \cref{prop:perfect morse fuction on path graph}, let $M_0$ be a perfect matching on $\Ind_{d-2}(P_{n-3}^{\{d+2,\dots,n,1,\dots,d-2\}})$. Define a matching $M_1$ on $\psi^{-1}(c_1)$ as follows: $(\sigma,\tau) \in M_0$ iff $(\sigma\cup d,\tau\cup d) \in M_1$. Therefore, we get the following.

\begin{itemize}
    \item Matching $M_1$ is an acyclic matching on $\psi^{-1}(c_1)$ with the following property. If $n-3=dk-1$ or $n-3=dk$, \emph{i.e.,} $n=dk+2$ or $dk+3$, then there is only one critical cell of dimension $dk-2k$ and that is 
    \begin{equation}
    \begin{split}
    \{d\} \sqcup \bigsqcup\limits_{i=1}^{k-1}\{di+3,\dots,d(i+1)\} \sqcup \{1 ,\dots,d-2\}, & \mathrm{~~if~} n=dk+2, \\
    \{d\} \sqcup \bigsqcup\limits_{i=1}^{k-1}\{di+3,\dots,d(i+1)\} \sqcup \{ n,1 ,\dots,d-3\}, & \mathrm{~~if~} n=dk+3.
    \end{split}
    \end{equation} 
    Otherwise, there is no critical cell.  
\end{itemize}

\textbf{Step $2$:} Observe that, in $C_n$, there are exactly two connected subgraphs of cardinality two containing vertex $d$, which are $C_n[\{d-1,d\}]=P_2^{\{d-1,d\}}$ and $C_n[\{d,d+1\}]=P_2^{\{d,d+1\}}$. Thus, elements of $\psi^{-1}(c_2)$ can be partitioned into two smaller disjoint subsets $\Delta_{\{d-1,d\}}$ and $\Delta_{\{d,d+1\}}$. Here, $\Delta_{\{d-1,d\}}$ is collection of all those simplices $\sigma \in \psi^{-1}(c_2)$ such that $\{d-1,d\}$ is the connected component of $C_n[\sigma]$. Similarly, $\Delta_{\{d,d+1\}}$ is collection of all those simplices $\sigma \in \psi^{-1}(c_2)$ such that $\{d,d+1\}$ is the connected component of $C_n[\sigma]$. Clearly, $\psi^{-1}(c_2)= \Delta_{\{d-1,d\}} \cup \Delta_{\{d,d+1\}}$ and $\Delta_{\{d-1,d\}} \cap \Delta_{\{d,d+1\}} = \emptyset$. Now, the idea is to define acyclic matching on $\Delta_{\{d-1,d\}}$, $\Delta_{\{d,d+1\}}$ and merge them together to get an acyclic matching on $\psi^{-1}(c_2)$.

\begin{enumerate}
    \item  Observe that, for $p\geq 2$, the $p$-simplices of $\Delta_{\{d-1,d\}}$ are in 1-1 correspondence with the $(p-2)$-simplixes of $\Ind_{d-2}(P_{n-4}^{\{d+2,\dots,n,1,\dots,d-3\}})$ with one extra simplex of dimension $1$, which is $\{d-1,d\}$. Using \cref{prop:perfect morse fuction on path graph}, let $M$ be a perfect matching on $\Ind_{d-2}(P_{n-4}^{\{d+2,\dots,n,1,\dots,d-3\}})$. Define a matching $M_2^1$ on $\Delta_{\{d-1,d\}}$ as follows: $(\sigma,\tau) \in M$ iff $(\sigma\cup \{d-1,d\},\tau\cup \{d-1,d\}) \in M_2^1$. Therefore, we get the following.
    
    Matching $M_2^1$ is an acyclic matching on $\Delta_{\{d-1,d\}}$ with the following property. If $n-4=dk-1$ or $dk$, \emph{i.e.,} $n=dk+3$ or $dk+4$, then there is only one critical cell of dimension $dk-2k+1$ and that is 
    \begin{equation}
    \begin{split}
    \{d-1,d\} \sqcup \bigsqcup\limits_{i=1}^{k-1}\{di+3,\dots,d(i+1)\} \sqcup \{n,1 ,\dots,d-3\}, & \mathrm{~~if~} n=dk+3, \\
    \{d-1,d\} \sqcup \bigsqcup\limits_{i=1}^{k-1}\{di+3,\dots,d(i+1)\} \sqcup \{n-1,n,1 ,\dots,d-4\}, & \mathrm{~~if~} n=dk+4.\\
    \end{split}
    \end{equation} 
    Otherwise, there is no critical cell.  
    
    \item Similar to the case of $\Delta_{\{d-1,d\}}$ and using the matching of $\Ind_{d-2}(P_{n-4}^{\{d+3,\dots,n,1,\dots,d-2\}})$, we get an acyclic matching, say $M_2^2$ on $\Delta_{\{d,d+1\}}$ with the following property.
    
    If $n-4=dk-1$ or $dk$, \emph{i.e.,} $n=dk+3$ or $dk+4$, then there is only one critical cell of dimension $dk-2k+1$ and that is 
    \begin{equation}
    \begin{split}
    \{d,d+1\} \sqcup \bigsqcup\limits_{i=1}^{k-1}\{di+4,\dots,d(i+1)+1\} \sqcup \{1 ,\dots,d-2\}, & \mathrm{~~if~} n=dk+3, \\
    \{d,d+1\} \sqcup \bigsqcup\limits_{i=1}^{k-1}\{di+4,\dots,d(i+1)+1\} \sqcup \{n,1 ,\dots,d-3\}, & \mathrm{~~if~} n=dk+4.\\
    \end{split}
    \end{equation}
    Otherwise, there is no critical cell.
\end{enumerate}
 Since $\psi^{-1}(c_2) = \Delta_{\{d-1,d\}} \sqcup \Delta_{\{d,d+1\}}$, $M_2=M_2^1\sqcup M_2^2$ (defined above) is an acyclic matching on $\psi^{-1}(c_2)$ with exactly two critical cells of dimension $dk-2k+1$ whenever $n=dk+3$ or $dk+4$ and with no critical cell otherwise. 
 
 We now define a matching on $\psi^{-1}(c_{d-2})$. Idea here is similar to that of step $2$.
 
{\bfseries Step $d-2$:} Observe that, in $C_n$, there are exactly $d-2$ connected subgraphs  of cardinality $d-2$ containing vertex $d$, and these subgraphs are path graphs of length $d-2$, {\itshape i.e.,} one of the element of the following set:  $\mathcal{L}= \big{\{}L^{\{3,4,\dots,d-1,d\}}_{d-2}, L^{\{4,5,\dots,d-1,d,d+1\}}_{d-2}, \dots, L^{\{d,d+1,\dots,2d-4,2d-3\}}_{d-2} \big{\}}$. Thus, elements of $\psi^{-1}(c_{d-2})$ can be partitioned into $d-2$ smaller disjoint subsets $\Delta_{L}$ for each $L\in \mathcal{L}$. Here, $\Delta_{L}$ is collection of all those simplices $\sigma \in \psi^{-1}(c_{d-2})$ such that $L$ is the connected component of $C_n[\sigma]$. Clearly, $\psi^{-1}(c_{d-2})= \bigsqcup\limits_{L\in \mathcal{L}}\Delta_{L}$. Now, the idea is to define acyclic matchings on $\Delta_{L}$ for each $L \in \mathcal{L}$ and merge them together to get an acyclic matching on $\psi^{-1}(c_{d-2})$.

\begin{enumerate}
    \item  Observe that, for $p\geq d-2$, the $p$-simplices of $\Delta_{L^{\{3,4,\dots,d-1,d\}}_{d-2}}$ are in 1-1 correspondence with the $(p-(d-2))$-simplices of $\Ind_{d-2}(P_{n-d}^{\{d+2,\dots,n,1\}})$ with one extra simplex of dimension $d-3$, which is $\{3,4,\dots,d-1,d\}$. Using \cref{prop:perfect morse fuction on path graph}, let $M$ be a perfect matching on $\Ind_{d-2}(P_{n-d}^{\{d+2,\dots,n,1\}})$. Define a matching $M_{d-2}^3$ on $\Delta_{L^{\{3,4,\dots,d-1,d\}}_{d-2}}$ as follows: $(\sigma,\tau) \in M$ iff $(\sigma\cup \{3,4,\dots,d-1,d\},\tau\cup \{3,4,\dots,d-1,d\}) \in M_{d-2}^3$. Therefore, we get the following.
    
    Matching $M_{d-2}^3$ is an acyclic matching on $\Delta_{L^{\{3,4,\dots,d-1,d\}}_{d-2}}$ with the following property. If $n-d=dk-1$ or $dk$, \emph{i.e.,} $n=d(k+1)-1$ or $d(k+1)$, then there is only one critical cell of dimension $dk-2k-1+d-2=d(k+1)-2(k+1)-1$ and that is 
    \begin{equation}
    \begin{split}
    \{3,4,\dots,d-1,d\} \sqcup \bigsqcup\limits_{i=1}^{k-1}\{di+3,\dots,d(i+1)\} \sqcup \{dk+3 ,\dots,n,1\}, & \mathrm{~~if~} n=d(k+1)-1, \\
    \{3,4,\dots,d-1,d\} \sqcup \bigsqcup\limits_{i=1}^{k-1}\{di+3,\dots,d(i+1)\} \sqcup \{dk+3 ,\dots,n\}, & \mathrm{~~if~} n=d(k+1).\\
    \end{split}
    \end{equation}
    Otherwise, there is no critical cell.
    
    \item  We now define a matching on $\Delta_{L^{\{t,t+1,\dots,d+t-3\}}_{d-2}}$ for each $t \in \{4,5,\dots,d\}$. Similar to the case of $\Delta_{L^{\{3,4,\dots,d-1,d\}}_{d-2}}$, we define an acyclic matching on $\Delta_{L^{\{t,t+1,\dots,d+t-3\}}_{d-2}}$, say $M_{d-2}^t$ using the perfect matching defined on $\Ind_{d-2}(P_{n-d}^{\{d+2,\dots,n,1\}})$. We thus get the following.
    
    If $n-d=dk-1$ or $dk$, \emph{i.e.,} $n=d(k+1)-1$ or $d(k+1)$, then there is only one critical cell of dimension $dk-2k-1+d-2=d(k+1)-2(k+1)-1$ and that is 
    \begin{equation}
    \begin{split}
     & \{t,t+1,\dots,d+t-3\} \sqcup \bigsqcup\limits_{i=1}^{k-1}\{di+t,\dots,d(i+1)+t-3\} \sqcup \{dk+t ,\dots,n,1,\dots,t-2\}, \\ & \mathrm{~~if~} n=d(k+1)-1 \mathrm{~~and~~}\\
    & \{t,t+1,\dots,d+t-3\} \sqcup \bigsqcup\limits_{i=1}^{k-1}\{di+t,\dots,d(i+1)+t-3\} \sqcup \{dk+t ,\dots,n,1,\dots,t-3\}, \\ & \mathrm{~~if~} n=d(k+1).
    \end{split}
    \end{equation}
    Otherwise, there is no critical cell.
\end{enumerate}
Since $\psi^{-1}(c_{d-2})= \bigsqcup\limits_{L\in \mathcal{L}}\Delta_{L}$, $M_{d-2}=\bigsqcup\limits_{t=3}^{d}M_{d-2}^t$ (defined in step $d-2$) is an acyclic matching on $\psi^{-1}(c_{d-2})$ with exactly $d-2$ critical cells of dimension $d(k+1)-2(k+1)-1$ whenever $n=d(k+1)-1$ or $d(k+1)$ and with no critical cell otherwise.

Using \cref{theorem:patchwork theorem}, we observe that $M= \bigsqcup\limits_{i=1}^{d-2} M_i$ is an acyclic matching on $\phi^{-1}(e_d)$ with: 
\begin{itemize}
    \item no critical cell if $n=dk+1$,
    \item exactly $1$ critical cell of dimension $dk-2k$ if $n=dk+2$
    \item exactly $t-2$ critical cells of dimension $dk-2k+t-3$ and $t-1$ critical cells of dimension $dk-2k+t-2$, if $n=dk+t$ for some $t \in \{3, \dots,d-1\}$
    \item exactly $d-2$ critical cells of dimension $d(k+1)-2(k+1)-1$ if $n=d(k+1)$.
\end{itemize}

We now define another matching on the set of critical cells corresponding to matching $M$ on $\phi^{-1}(e_d)$. The idea is the following. If $n=dk+3$, then observe from step $1$ and step $2$ that if $\gamma$ is a critical cell of dimension $dk-2k$ then $\gamma \cup \{d-1\}$ is also a critical cell of dimension $dk-2k+1$. Thus, match $\gamma$ with $\gamma \cup \{d-1\}$. Now, let $n=dk+t$ for some $t \in \{4,\dots,d-1\}$. From step $t-2$ and step $t-1$ we see that, if in step $t-2$, $ \gamma= \{d-i,\dots,d,\dots,d+t-i-3\}\cup \{\beta\}$ is a critical cell of dimension $dk-2k+t-3$ then in step $t-1$, $\{d-i-1,d-i,\dots,d,\dots,d+t-i-3\}\cup \{\beta\}$ is critical cell of dimension $dk-2k+t-2$. Here, we match $\gamma$ with $\gamma \cup \{d-i-1\}$. Let the matching defined above is $M^\prime$.

\begin{claim}\label{claim:for matching on inverse image of d} Let $M$ and $M^\prime$ be matchings on $\phi^{-1}(e_d)$ as defined above. Then, $\mathcal{M}=M \sqcup M^\prime$ is an acyclic matching on $\phi^{-1}(e_d)$ with 
\begin{itemize}
    \item no critical cell if $n=dk+1$,
    \item exactly $1$ critical cell of dimension $dk-2(k+1)+t$ if $n=dk+t$ for some $t \in \{2,\dots,d-1\}$,
    \item exactly $d-2$ critical cells of dimension $d(k+1)-2(k+1)-1$ if $n=d(k+1)$,
\end{itemize}
\end{claim}
\begin{proof}[Proof of \cref{claim:for matching on inverse image of d}]
Let $\Delta_0=\{\sigma \in \phi^{-1}(e_d): \sigma \in \eta \mathrm{~for ~ some~} \eta \in M \}$ and $\Delta_1= \phi^{-1}(e_d) \setminus \Delta_0$. Since $M$ and $M^\prime$ are union of a sequence of elementary matchings on $\Delta_0$ and $\Delta_1$ respectively, $M$ and $M^\prime$ are acyclic matching from \cref{proposition:sequence of element matching}. 

Further, it is clear from the description of the critical cells given in steps $1$ through $(d-2)$ that if $\tau \in \Delta_1$ and $\sigma \in \Delta_0$ then $\tau \nsubseteq \sigma$. Thus, using \cref{theorem:jacobs result of pataching two matchings}, we get that $\mathcal{M}$ is an acyclic matching on $\phi^{-1}(e_d)$. Calculation of number of critical cells corresponding to matching $\mathcal{M}$ is straight forward once we fix an $n$.
\end{proof}

\textbf{Case 3:} In cases $1$ and $2$, we defined acyclic matchings on $\phi^{-1}(e_{di})$ for $i \in \{1,\dots,k\}$. Here, we consider the preimage $\phi^{-1}(e_r)$ and define a matching $\mathcal{M}^\prime$ on it.

\begin{itemize}
    \item If $n=dk$, then $\phi^{-1}(e_r)$ is isomorphic to $\Ind_{d-2}(G)$, where $G$ is isomorphic to the union $k$ disjoint copies of path graphs of length $d-1$. From \cref{cor:disjoint_union_of_paths_2}, there exists an acyclic matching on the face poset of $\Ind_{d-2}(G)$ with exactly one critical cell of dimension $dk-2k-1$.
    \item If $n=dk+1$, then $\phi^{-1}(e_r)$ is isomorphic to $\Ind_{d-2}(G_1)$, where $G_1$ is isomorphic to the union $k-1$ disjoint copies of $P_{d-1}$ and one copy of $P_d$. Again from \cref{cor:disjoint_union_of_paths_2}, there exists an acyclic matching on the face poset of $\Ind_{d-2}(G_1)$ with exactly one critical cell of dimension $dk-2k-1$.
    \item If $n\neq dk,dk+1$ then one connected component of $C_n\setminus \{d,2d,\dots,dk\}$ will be a path graph of cardinality either less than $d-1$ or greater than $d$ and less than $2d-2$. In both the cases, using \cref{cor:disjoint_union_of_paths_3} there exists a matching on $\phi^{-1}(c_r)$ with no critical cell.
    \end{itemize}
From \Cref{eq:definition of map phi cycle graph}, \cref{theorem:patchwork theorem}, case $(1)$, \cref{claim:for matching on inverse image of d} and case $3$, we get that $\mathcal{M}\cup \mathcal{M}^\prime$ is an acyclic matching on $\cF(\Ind_{d-2}(C_n))$ with
\begin{itemize}
    \item exactly $d-1$ critical cells of dimension $(dk-2k-1)$ if $n=dk$,
    \item exactly one critical cell of dimension $(dk-2k+t-2)$ if $n=dk+t$ for some $t \in \{1,\dots,d-1\}$.
\end{itemize}
Hence, \cref{theorem:indr of cycle graph} follows from \cref{acyc4}.
\end{proof}

\section{The case of perfect \texorpdfstring{$m$}{m}-ary trees}\label{sec_trees}

For fixed $m\geq 2$, an $m$-ary tree is a rooted tree in which each node has no more than $m$ children. A full $m$-ary tree is an $m$-ary tree where within each level every node has either $0$ or $m$ children. A perfect $m$-ary tree is a full $m$-ary tree in which all leaf nodes are at the same depth (the depth of a node is the number of edges from the node to the tree's root node). 

\begin{figure}[H]
\begin{subfigure}[]{0.25 \textwidth}
\vspace{1.35cm}
    \Tree[.$a_{0,1}$ [.$a_{1,1}$ [.$a_{2,1}$ ]
                [.$a_{2,2}$  ]]
               [.$a_{1,2}$ $a_{2,3}$ 
                [.$a_{2,4}$ ]] ]
    \caption{$B_2^2$}  
    \label{graphb2}
\end{subfigure}
 \begin{subfigure}[]{0.7 \textwidth}
    \Tree[ .$a_{0,1}$ [.$a_{1,1}$ [.$a_{2,1}$ ] [.$a_{2,2}$ ]
                [.$a_{2,3}$  ]]
                [.$a_{1,2}$ [.$a_{2,4}$ ] [.$a_{2,5}$ ]
                [.$a_{2,6}$  ]]
                [.$a_{1,3}$ [.$a_{2,7}$ ] [.$a_{2,8}$ ]
                [.$a_{2,9}$  ]]]
    \caption{$B_2^3$}  
    \label{graphb4}
\end{subfigure}
\begin{subfigure}[]{0.10 \textwidth}
\end{subfigure}
\caption{}
\label{ptreegraph}
\end{figure}

Following are some known facts about the perfect $m$-ary tree of height $h$, denoted $B_h^m$ (see \cref{ptreegraph} for example).
\begin{enumerate}
\item $B_h^m$ has $\sum\limits_{i=0}^{h} m^{i}= \frac{m^{h+1}-1}{m-1}$ nodes.
\item For $0 \leq t\leq h$, the number of nodes of depth $t$ in $B_h^m$ is $m^{t}$.
\item $B_h^m$ has $m^{h}$ leaf nodes.
\end{enumerate}

Let us first fix some notations.

\begin{itemize} 
    
    \item For $d \in \{0,1,\dots,h\}$, let $V_d(B_h^m)$ denote the set of vertices of $B_h^m$ of depth $d$. 
    
    \item Let the vertices of $B_h^m$ of depth $d$ be labelled by $a_{d,1},a_{d,2},\dots, a_{d,m^{d}}$ from left to right (see \cref{ptreegraph}).
    
    \item The following ordering of the vertices of $B_h^m$ will be used in the proofs of this section. Given $a_{p,q},a_{p^\prime,q^\prime} \in V(B_h^m)$, we say that $a_{p,q}< a_{p^\prime,q^\prime}$ whenever $q < q^\prime$ and if $q=q^\prime$ then $p< p^\prime$. For example, in $B_2^3$, $a_{0,1}<a_{1,1}<a_{2,1}<a_{2,4}$. 
    
     \item For $\sigma \in \Delta$, denote $\sigma \cup \{v\}$ by $\sigma \cup v$.
     
     \item For simplicity of notations, $B_h^2$ will be denoted by $B_h$.
\end{itemize}

We first give some examples to explain our method for computing the homotopy type of higher independence complexes of $B_h$. 

\begin{example}\label{exampleind4b2}
\normalfont 
Here we compute the homotopy type of $\Ind_4(B_2)$. 
Define an element matching on $\Delta_0=\Ind_4(B_2)$ using the vertex $a_{2,1}$ as in \Cref{eq:elementmatchingsequence}.

Observe that, if $\sigma \in \Delta_1$ then $\sigma\cup a_{2,1} \notin \Ind_4(B_2)$. By definition of $\Ind_r(G)$, we observe that either $\{a_{1,1},a_{0,1},a_{1,2},a_{2,2}\} \subseteq \sigma$ or $ \{a_{1,1},a_{0,1},a_{1,2},a_{2,3}\} \subseteq \sigma$ or $\{a_{1,1},a_{0,1},a_{1,2},a_{2,4}\} \subseteq \sigma$. Since $\{a_{1,1},a_{0,1},a_{1,2},a_{2,2}\} , \{a_{1,1},a_{0,1},a_{1,2},a_{2,3}\} , \{a_{1,1},a_{0,1},a_{1,2},a_{2,4}\}$ are maximal simplices of $\Ind_4(B_2)$, these are the only unmatched cells {\itshape i.e.}, 
$$\Delta_1=\big{\{}\{a_{1,1},a_{0,1},a_{1,2},a_{2,2}\}, \{a_{1,1},a_{0,1},a_{1,2}, a_{2,3}\} , \{a_{1,1},a_{0,1},a_{1,2},a_{2,4}\}\big{\}}.$$ Therefore, \cref{acyc4} implies that $\Ind_4(B_2)\simeq \bigvee\limits_{3} \mathbb{S}^3$.
\end{example}

\begin{example}\label{exampleind4b3}
\normalfont 
Using the homotopy type of $\Ind_4(B_2)$, we compute the homotopy type of $\Ind_4(B_3)$. Here, we show that $\Ind_4(B_3)\simeq \Ind_4(B_3 - \{a_{0,1}\})$. It is easy to see that $B_3 - \{a_{0,1}\} \cong B_2 \sqcup B_2$ (see \Cref{fig:b3andb3minus}). Thus, \cref{observation:basic properties}$(iv)$ implies that $\Ind_4(B_3)\simeq \Ind_4(B_2) \ast \Ind_4(B_2) \simeq \bigvee\limits_{9} \mathbb{S}^7$.

\begin{figure}[H]
	\begin{subfigure}[]{0.5 \textwidth}
		\centering
		\begin{tikzpicture}
 [scale=0.40, vertices/.style={inner sep=0.3pt}]
		\node[vertices] (a01) at (0,0)  {$a_{0,1}$};
		\node[vertices] (a11) at (-4,-3)  {$a_{1,1}$};
		\node[vertices] (a12) at (4,-3)  {$a_{1,2}$};
		\node[vertices] (a21) at (-6,-6)  {$a_{2,1}$};
		\node[vertices] (a22) at (-2,-6)  {$a_{2,2}$};
		\node[vertices] (a23) at (2,-6)  {$a_{2,3}$};
		\node[vertices] (a24) at (6,-6)  {$a_{2,4}$};
		\node[vertices] (a31) at (-7,-9)  {$a_{3,1}$};
		\node[vertices] (a32) at (-5,-9)  {$a_{3,2}$};
		\node[vertices] (a33) at (-3,-9)  {$a_{3,3}$};
		\node[vertices] (a34) at (-1,-9)  {$a_{3,4}$};
		\node[vertices] (a38) at (7,-9)  {$a_{3,8}$};
		\node[vertices] (a37) at (5,-9)  {$a_{3,7}$};
		\node[vertices] (a36) at (3,-9)  {$a_{3,6}$};
		\node[vertices] (a35) at (1,-9)  {$a_{3,5}$};
		
\foreach \to/\from in {a01/a11,a01/a12,a11/a21,a11/a22,a12/a23,a12/a24,a21/a31,a21/a32,a22/a33,a22/a34,a23/a35,a23/a36,a24/a37,a24/a38}
\draw [-] (\to)--(\from);
\end{tikzpicture}\caption{$B_3$}\label{fig:G243}
	\end{subfigure}
	\begin{subfigure}[]{0.5 \textwidth}
		\centering
		\vspace{1cm}
	\begin{tikzpicture}
  [scale=0.40, vertices/.style={inner sep=0.3pt}]
		\node[vertices] (a11) at (-4,-3)  {$a_{1,1}$};
		\node[vertices] (a12) at (4,-3)  {$a_{1,2}$};
		\node[vertices] (a21) at (-6,-6)  {$a_{2,1}$};
		\node[vertices] (a22) at (-2,-6)  {$a_{2,2}$};
		\node[vertices] (a23) at (2,-6)  {$a_{2,3}$};
		\node[vertices] (a24) at (6,-6)  {$a_{2,4}$};
		\node[vertices] (a31) at (-7,-9)  {$a_{3,1}$};
		\node[vertices] (a32) at (-5,-9)  {$a_{3,2}$};
		\node[vertices] (a33) at (-3,-9)  {$a_{3,3}$};
		\node[vertices] (a34) at (-1,-9)  {$a_{3,4}$};
		\node[vertices] (a38) at (7,-9)  {$a_{3,8}$};
		\node[vertices] (a37) at (5,-9)  {$a_{3,7}$};
		\node[vertices] (a36) at (3,-9)  {$a_{3,6}$};
		\node[vertices] (a35) at (1,-9)  {$a_{3,5}$};
		
\foreach \to/\from in {a11/a21,a11/a22,a12/a23,a12/a24,a21/a31,a21/a32,a22/a33,a22/a34,a23/a35,a23/a36,a24/a37,a24/a38}
\draw [-] (\to)--(\from);
\end{tikzpicture}\caption{$B_3-\{a_{0,1}\}$}\label{b3minus}
	\end{subfigure}
	\caption{} \label{fig:b3andb3minus}
\end{figure}

We now prove that $\Ind_4(B_3)\simeq  \Ind_4(B_3 - \{a_{0,1}\})$. Let $R(a_{0,1})=\{\sigma \in \Ind_4(B_3) : a_{0,1} \in \sigma\}$. Clearly, $\Ind_4(B_3)\setminus R(a_{0,1}) = \Ind_4(B_3 - \{a_{0,1}\})$. From \cref{acyc5}, it is enough to define a perfect matching on $R(a_{0,1})$. We do so by defining a sequence of elementary matching on $\Delta_0=\Ind_4(B_3)$ using vertices $\{a_{3,1},a_{3,3},a_{3,5},a_{3,7}\}$ as in \Cref{eq:elementmatchingsequence}. We now compute the set of critical cells, {\itshape i.e}, $\Delta_4$.

\begin{claim}\label{clainforind4b3}
$\Delta_4=\Ind_4(B_3)\setminus R(a_{0,1})$.
\end{claim}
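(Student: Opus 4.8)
The plan is to show that the sequence of element matchings on $\Delta_0 = \Ind_4(B_3)$ using the vertices $\{a_{3,1}, a_{3,3}, a_{3,5}, a_{3,7}\}$ has $\Delta_4$ exactly equal to the complement of $R(a_{0,1})$, so that this gives a perfect matching on $R(a_{0,1})$ and hence, by \cref{acyc5}, $\Ind_4(B_3)$ simplicially collapses onto $\Ind_4(B_3 - \{a_{0,1}\})$. I will prove the two inclusions separately. For the inclusion $\Delta_4 \subseteq \Ind_4(B_3)\setminus R(a_{0,1})$, I would argue that any simplex $\sigma$ still containing $a_{0,1}$ gets matched at some stage: the key observation is that $a_{0,1}$ together with its two children $a_{1,1}, a_{1,2}$ already forms a connected subgraph of size $3$, so to stay in $\Ind_4$ a large simplex containing $a_{0,1}$ is tightly constrained in how many depth-$1$ and depth-$2$ vertices it can contain; I would track, stage by stage, that whichever of $a_{3,1}, a_{3,3}, a_{3,5}, a_{3,7}$ can be added to $\sigma$ (or removed, if already present) without exceeding component size $4$ will indeed be available, precisely because $a_{0,1} \in \sigma$ forces the relevant depth-$2$ component to be small. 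Concretely, for each of the four leaf vertices $a_{3,2j-1}$ the potential component is $\{a_{3,2j-1}\} \cup (\text{part of }\{a_{2,j}, a_{1,\lceil j/2\rceil}, a_{0,1}, \dots\})$, and one checks the component stays within the budget of $4$.

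For the reverse inclusion $\Ind_4(B_3)\setminus R(a_{0,1}) \subseteq \Delta_4$, I would show that no simplex avoiding $a_{0,1}$ ever gets matched by any $M(a_{3,2j-1})$. The point is that if $a_{0,1}\notin\sigma$, then the component of $B_3[\sigma]$ containing a candidate leaf $a_{3,2j-1}$ lies entirely within the subtree hanging below $a_{1,\lceil j/2\rceil}$ cut off from $a_{0,1}$; this subtree is (a copy of) $B_2$, which has only $4$ vertices below and including $a_{1,\lceil j/2\rceil}$ on the relevant branch, and one verifies directly that adding or deleting $a_{3,2j-1}$ either already violates the size-$4$ condition or produces a component of size exactly $5$, so the pair $\{\sigma, \sigma\cup a_{3,2j-1}\}$ is never a legal matched pair in $\Ind_4(B_3)$. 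Hence such $\sigma$ survives all four stages and lies in $\Delta_4$. Combining the two inclusions gives the claim.

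The main obstacle I anticipate is the careful bookkeeping in the first inclusion: one must verify that the element matchings are applied in the stated order and that, at stage $j$, the simplices of $\Delta_{j-1}$ containing $a_{0,1}$ but not yet matched really do admit the move involving $a_{3,2j-1}$. This requires enumerating the possible connected components through $a_{0,1}$ in $B_3$ and checking the size constraint in each case — a finite but slightly tedious case analysis. Since \cref{proposition:sequence of element matching} already guarantees that $\bigsqcup_{j} M(a_{3,2j-1})$ is an acyclic matching with $\Delta_4$ as its critical set, and \cref{lemma:element matching} guarantees that restricted to $R(a_{0,1})$ this is perfect (as every simplex of $R(a_{0,1})$ gets matched), the only genuine content of \cref{clainforind4b3} is the set-theoretic identification of $\Delta_4$, which the two inclusions above supply.
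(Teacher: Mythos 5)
Your second inclusion is where the argument breaks down. You assert that for $\sigma$ with $a_{0,1}\notin\sigma$ the pair $(\sigma,\sigma\cup a_{3,2j-1})$ is ``never a legal matched pair'' because adding or deleting the leaf would violate the size-$4$ bound or create a component of size $5$. This is false for small simplices: $(\emptyset,\{a_{3,1}\})$ and $(\{a_{2,2}\},\{a_{2,2},a_{3,1}\})$ are legitimate pairs in $M(a_{3,1})$, so $\emptyset$, $\{a_{3,1}\}$ and $\{a_{2,2}\}$ are all matched at the first stage even though none of them contains $a_{0,1}$. Consequently the set $\Delta_4$ produced by running the element matchings on all of $\Ind_4(B_3)$ is strictly smaller than $\Ind_4(B_3)\setminus R(a_{0,1})$, and the inclusion $\Ind_4(B_3)\setminus R(a_{0,1})\subseteq\Delta_4$ cannot be established the way you propose --- it is not true of the full matching. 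What the collapse argument actually needs, and what the claim should be read as asserting, is only your \emph{first} inclusion: every $\sigma$ with $a_{0,1}\in\sigma$ lies in some $N(a_{3,2i-1})$. Since each matched pair differs only in a leaf $a_{3,2i-1}\neq a_{0,1}$, the pairs split cleanly into those contained in $R(a_{0,1})$ and those contained in its complement; keeping only the former yields an acyclic matching that pairs up all of $R(a_{0,1})$ and touches nothing else, so its critical cells are exactly the subcomplex $\Ind_4(B_3)\setminus R(a_{0,1})=\Ind_4(B_3-\{a_{0,1}\})$ and \cref{acyc5} applies.

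For the first inclusion you give only a sketch (``one checks the component stays within the budget of $4$''), but this is the entire content of the claim, and the stage-by-stage bookkeeping you defer is precisely what has to be written down. The paper's version is short: $a_{0,1}\in\sigma$ forces $\{a_{1,1},a_{1,2},a_{2,1},a_{2,2}\}\nsubseteq\sigma$; if $a_{3,1}\in\sigma$, or $|\sigma|<4$, or $\{a_{1,1},a_{2,1}\}\nsubseteq\sigma$, then $\sigma\in N(a_{3,1})$; otherwise, if $a_{2,2}\notin\sigma$ then $\sigma\in N(a_{3,3})$, and if $a_{2,2}\in\sigma$ then $a_{1,2}\notin\sigma$ and $\sigma\in N(a_{3,5})$. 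One must also check, as you note, that the proposed partner has itself survived the earlier stages; this holds because in each case $\sigma\cup a_{3,2i-1}\cup a_{3,2i'-1}$ still fails to be $4$-independent for the earlier leaves $a_{3,2i'-1}$. Without this explicit case analysis your first inclusion remains an announcement rather than a proof.
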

Since $N(a_{3,2i-1})\subseteq R(a_{0,1})$ for all $i \in \{1,2,3,4\}$, $\Ind_4(B_3)\setminus R(a_{0,1})\subseteq \Delta_4$. To show the other way inclusion, it is enough to show that if $\sigma \in \Ind_4(B_3)$ and $a_{0,1}\in \sigma$ then $\sigma \in N(a_{3,2i-1})$ for some $i \in \{1,2,3,4\}$.

Let $\sigma \in \Ind_4(B_3)$ and $a_{0,1}\in \sigma$. 
Since $a_{0,1}\in \sigma$, it follows from the definition of $\Ind_r(G)$ that $\{a_{1,1},a_{1,2},a_{2,1},a_{2,2}\} \nsubseteq \sigma$.  If $a_{3,1}\in \sigma$ or $|\sigma| < 4$ or  $\{a_{1,1},a_{2,1}\} \nsubseteq \sigma$, then $\sigma \in N(a_{3,1})$ Thus, assume that $a_{3,1}\notin \sigma$ and $|\sigma| \geq 4$ and  $\{a_{1,1},a_{2,1}\} \subset \sigma$. Now, if $a_{2,2} \notin \sigma$ then $\sigma \in N(a_{3,3})$ and if $a_{2,2} \in \sigma$ then $a_{1,2} \notin \sigma$ which implies that $\sigma \in N(a_{3,5})$. This completes the proof of \cref{clainforind4b3}.
\end{example}

Henceforth, $m \geq 3$ will be a fixed integer.

\begin{lemma}\label{lemma:indrbhm limited}
Let $r\geq \frac{m^h-1}{m-1}$. Then the homotopy type of $r$-independence complex of the graph $B_h^m$ is given as follows,

$$ \Ind_r(B_h^m)\simeq 
\begin{cases}
\bigvee\limits_{\binom{m^h-1}{s}} \mathbb{S}^{r-1}, & \mathrm{if } ~r = \frac{m^h-1}{m-1}+s \mathrm{ ~for ~some ~} s \in \{0,1,\dots, m^h-1\},  \\
\{\mathrm{point} \}, & \mathrm{if } ~r \geq \frac{m^{h+1}-1}{m-1}.
\end{cases}$$
\end{lemma}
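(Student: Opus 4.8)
The plan is to mimic the strategy used for fully whiskered graphs (\cref{theorem:fully whiskered graph} and \cref{theorem:leaf at every vertex general graph}): peel off one leaf from each leaf-parent via a sequence of element matchings, forcing the internal vertices into every critical cell, and then count. Observe that $B_h^m$ has $m^{h-1}$ vertices of depth $h-1$ (the parents of the leaves), and attached to each such vertex $a_{h-1,j}$ there are exactly $m$ leaves $a_{h,\,m(j-1)+1},\dots,a_{h,mj}$. First I would set $\Delta_0 = \Ind_r(B_h^m)$ and perform the sequence of element matchings from \cref{eq:elementmatchingsequence} using one chosen leaf under each depth-$(h-1)$ vertex, say $\{a_{h,\,m(j-1)+1} : 1 \le j \le m^{h-1}\}$, in the vertex order fixed in this section. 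By \cref{proposition:sequence of element matching} this yields an acyclic matching whose critical cells form $\Delta_{m^{h-1}}$.

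The key claim is that $\sigma \in \Delta_{m^{h-1}}$ forces $V(B_h^m) \setminus V_h(B_h^m) \subseteq \sigma$, i.e., every internal (non-leaf) vertex lies in $\sigma$; this is the analogue of \cref{claim: leaf at all vertex}. The argument: the internal vertices $B_h^m - V_h(B_h^m)$ form a connected subgraph on $\frac{m^h-1}{m-1}$ vertices; if some internal vertex is missing from $\sigma$, take the one minimal in the fixed order, and one checks that $\sigma$ was matched by the corresponding chosen leaf's element matching — the induced component on the internal vertices present plus that leaf still has size at most $r$ precisely because a vertex was omitted, so $\sigma\cup\{\text{leaf}\}$ stays in the complex. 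Conversely, once all internal vertices are in $\sigma$, the chosen leaf $a_{h,m(j-1)+1}$ can never be added (it would join its parent into a component of size $\ge \frac{m^h-1}{m-1}+1 > r$ since $r$ is just below $\frac{m^{h+1}-1}{m-1}$... here I must be careful: for the first branch $r$ can be as small as $\frac{m^h-1}{m-1}$, in which case $\sigma$ containing all internal vertices already has $C[\sigma]$ connected of size exactly $r$, so no leaf at all can be added, and cardinality of $\sigma$ is forced to be exactly $r$). Combining: $\sigma \in \Delta_{m^{h-1}}$ iff $\sigma \supseteq B_h^m - V_h(B_h^m)$, $a_{h,1}\notin\sigma$, and $|\sigma| = r$ (cardinality is forced to be exactly $r$ because the internal vertices already form one connected component of size $\frac{m^h-1}{m-1} \le r$, so no further leaf-free growth and any additional leaves must keep total size $\le r$). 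The remaining $s = r - \frac{m^h-1}{m-1}$ vertices of $\sigma$ are chosen freely among the $m^h$ leaves but avoiding $a_{h,1}$ — wait, that gives $\binom{m^h-1}{s}$, matching the stated count — with the caveat that adding leaves never violates $r$-independence here since each extra leaf either starts its own size-$1$ component or enlarges its parent's component, and one must verify the total stays $\le r$; it does, because the union of all internal vertices with $s$ leaves has at most $\frac{m^h-1}{m-1}+s = r$ vertices total in the single component containing the root (all leaves attach to internal vertices already present), so $C[\sigma]$ is connected of size exactly $r$ — fine. Then \cref{acyc4} gives $\bigvee_{\binom{m^h-1}{s}} \mathbb{S}^{r-1}$.

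For the contractible case $r \ge \frac{m^{h+1}-1}{m-1} = |V(B_h^m)|$, the whole vertex set is $r$-independent, so $\Ind_r(B_h^m)$ is a simplex; this is immediate from \cref{observation:basic properties}$(i)$ and needs no matching.

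\textbf{Main obstacle.} The delicate point is pinning down exactly when a partially-matched cell stays in the complex during the sequence of element matchings — specifically verifying, in the inductive minimality argument, that omitting one internal vertex really does keep the induced component containing the relevant chosen leaf of size $\le r$, and simultaneously that once all internal vertices are present the cardinality is pinned to exactly $r$ rather than something larger. This requires tracking how the connected component of $C[\sigma]$ around the root grows, and handling the boundary value $r = \frac{m^h-1}{m-1}$ (where $s=0$, only $\{a_{h,1}\}$ is excluded and the count $\binom{m^h-1}{0}=1$) uniformly with larger $s$. I expect the component-size bookkeeping to be the bulk of the work, but it is structurally identical to the whiskered-graph proofs already in the paper.
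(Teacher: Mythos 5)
Your proposal follows essentially the same route as the paper's proof: the identical sequence of element matchings using the leftmost leaf under each depth-$(h-1)$ vertex, the same key claim that every critical cell must contain all of $\bigsqcup_{j=0}^{h-1}V_j(B_h^m)$ and hence have cardinality exactly $r$, and the same count $\binom{m^h-1}{s}$ of $s$-subsets of $V_h(B_h^m)\setminus\{a_{h,1}\}$. The one step you leave vague --- which chosen leaf matches a cell missing an internal vertex --- is resolved in the paper exactly as you anticipate: take the minimal missing internal vertex in the fixed order and descend to the leftmost leaf of the subtree rooted at its first child; that subtree has at most $\frac{m^h-1}{m-1}\le r$ vertices and traps the component created by adding the leaf, so the cell is indeed matched.
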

\begin{proof}
The idea of the proof here is similar to that of \cref{exampleind4b2}. If $r\geq \frac{m^{h+1}-1}{m-1}$, then \cref{observation:basic properties}$(i)$ implies the result. Let $r = \frac{m^h-1}{m-1}+s$ for some fixed $s \in \{0,1,\dots, m^h-1\}$ and $\Delta_0=\Ind_r(B_h^m)$. Define a sequence of element matching on $\Delta_0$ as in \Cref{eq:elementmatchingsequence} using the following vertices of depth $h$: $\{a_{h,1},a_{h,m+1},\dots,a_{h,m(m^{h-1}-1)+1}\}.$ 

We now show that the set of critical cells $\Delta_{m^{h-1}}$ is a set of $\binom{m^h-1}{s}$ cells of dimension $r-1$.

\begin{claim}\label{claim:indrbhm limited}
\begin{enumerate}
    \item If $\sigma \in \Delta_{m^{h-1}}$, then $\bigsqcup\limits_{j=0}^{h-1} V_j(B_h^m) \subseteq \sigma$.
    \item If $\sigma \in \Delta_{m^{h-1}}$, then $\sigma$ is of cardinality $r$.
    \item The cardinality of the set of critical cells $\Delta_{m^{h-1}}$ is $\binom{m^h-1}{s}$.
\end{enumerate}
\end{claim}

\begin{proof}[Proof of \cref{claim:indrbhm limited}]
To the contrary of \cref{claim:indrbhm limited}$(1)$, assume that there exists $\sigma_1 \in \Delta_{m^{h-1}}$ such that $\bigsqcup\limits_{j=0}^{h-1} V_j(B_h^m) \nsubseteq \sigma$. Let $a_{i_1,j_1} \in \bigsqcup\limits_{j=0}^{h-1} V_j(B_h^m)$ be the smallest element with respect to the ordering given above such that $a_{i_1,j_1} \notin \sigma_1$. Since $a_{i_1,j_1}$ is not a leaf, let $a_{i_1,j_1}^1$ be the first child of $a_{i_1,j_1}$ and let $a_{h,m\ell-(m-1)}$ be the left most leaf of the sub-tree rooted at $a_{i_1,j_1}^1$. Since $\sigma_1 \in \Delta_{m^{h-1}}$, $\sigma \in \Delta_{\ell-1}$. Observe that the number of vertices of sub-tree rooted at $a_{i_1,j_1}^1$ is not more than $\frac{m^h-1}{m-1}$. Therefore, $\sigma_1 \in N(a_{h,\ell})$ (being the left most child of a sub-tree, $\ell$ is an odd number) contradicting the assumption that $\sigma_1 \in \Delta_{m^{h-1}}$. This proves \cref{claim:indrbhm limited}$(1)$.

We now prove the second part of the above claim. Let $\sigma \in \Delta_{m^{h-1}}$. Clearly, the cardinality of $\sigma$ is at least $r$ (because any face of $\Ind_r(B_h)$ of cardinality less that $r$ is in $N(a_{h,1})$). Using \cref{claim:indrbhm limited}$(1)$, we see that $B_h[\sigma]$ is a connected graph of cardinality equal to the cardinality of $\sigma$. Therefore, the cardinality of $\sigma$ is at most $r$. This proves \cref{claim:indrbhm limited}$(2)$.

It is clear that, if $\sigma \in \Ind_r(B_h)$ and $a_{h,1} \in \sigma$ then $\sigma \in N(a_{h,1})$ implying that $\sigma \notin \Delta_{m^{h-1}}$. Hence, using \cref{claim:indrbhm limited}$(1)$ and $(2)$, we get that the cardinality of the set $\Delta_{m^{h-1}}$ is equal to number of $s$-subsets of the set $V_h(B_h^m)\setminus\{a_{h,1}\}$.  Which is equal to $\binom{m^h-1}{s}$. This completes the proof of \cref{claim:indrbhm limited}.
\end{proof}

From \cref{claim:indrbhm limited} and \cref{proposition:sequence of element matching}, we see that the matching $\bigcup\limits_{i=1}^{m^{h-1}}M(a_{h,mi-(m-1)})$ on $\Ind_r(B_h^m)$ has $\binom{m^h-1}{s}$ critical cells of fixed dimension $r-1$. Therefore, \cref{lemma:indrbhm limited} follows from \cref{acyc4}.
\end{proof}

We are now ready to present the main result of this section.

\begin{theorem}\label{theorem:indrbhm general}
For a fixed $t \geq 1$, let $r= \big{(}\sum\limits_{i=0}^{t-1}m^i \big{)} +s =\frac{m^t-1}{m-1}+s$ for some $s \in \{0,1, \dots, m^t-1\}$. Then the homotopy type of $r$-independence complex of the graph $B_h^m$ is given as follows,

$$ \Ind_r(B_h^m)\simeq 
\begin{cases}
\bigvee\limits_{p_1} \mathbb{S}^{q_1}, & \mathrm{if } ~h = (k-1)(t+2)+t+1 \mathrm{ ~for ~some }~k \geq 1,  \\
\bigvee\limits_{p_2} \mathbb{S}^{q_2}, & \text{if } h = k(t+2)+t \text{ for some } k \geq 0,  \\
\{\mathrm{point} \}, & \mathrm{otherwise},
\end{cases}$$

where, \begin{equation*}
\begin{split}
    p_1 & = \binom{m^t-1}{s}^{m(m^0+m^{t+2}+\dots+m^{(k-1)(t+2)})} \text{ and } \\
    q_1 & = mr(m^0+m^{t+2}+\dots+m^{(k-1)(t+2)})-1,\\
    p_2 & = \binom{m^t-1}{s}^{m^0+m^{t+2}+\dots+m^{k(t+2)}}, \\
    q_2 & = r(m^0+m^{t+2}+\dots+m^{k(t+2)})-1.
\end{split}
\end{equation*}
\end{theorem}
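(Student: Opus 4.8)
The plan is to reduce $\Ind_r(B_h^m)$, via a sequence of element matchings, to the higher independence complex of a smaller forest, iterating the trick already used in \cref{exampleind4b3} where $\Ind_4(B_3)$ collapsed onto $\Ind_4(B_3-\{a_{0,1}\})$. The key observation driving the induction is that the value $t$ is chosen precisely so that $r=\frac{m^t-1}{m-1}+s$ makes \cref{lemma:indrbhm limited} applicable to each perfect $m$-ary subtree of height $t$: such a subtree has $\frac{m^{t+1}-1}{m-1}$ vertices, $r$ lies in the interval $\left[\frac{m^t-1}{m-1},\frac{m^{t+1}-1}{m-1}\right)$, and so $\Ind_r$ of one copy of $B_t^m$ is a wedge of $\binom{m^t-1}{s}$ spheres of dimension $r-1$. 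Together with \cref{observation:basic properties}$(iv)$ this gives the join formula for a disjoint union of such subtrees, which is where the exponents $\binom{m^t-1}{s}^{N}$ and the dimensions $rN-1$ (from part (2) of \cref{lemma:join of spheres}) in $p_1,q_1,p_2,q_2$ come from.

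First I would set up the recursion on the height $h$. The idea is: peeling off the top $t+2$ levels of $B_h^m$ in the right way either kills the complex (contractibility) or leaves $m^{t+2}$ disjoint copies of $B_{h-(t+2)}^m$ whose $r$-independence complex we understand by induction. Concretely, I would define element matchings using carefully chosen leaves of the bottom level, analogous to the use of $\{a_{h,1},a_{h,m+1},\dots\}$ in \cref{lemma:indrbhm limited} and of $\{a_{3,1},a_{3,3},\dots\}$ in \cref{exampleind4b3}, so that the set of critical cells after matching equals $\Ind_r$ of the subforest obtained by deleting the vertices of depth $0,1,\dots,t+1$ — provided those top $t+2$ levels cannot all fit inside an $r$-independent set once a full subtree of height $t$ below them is forced to be present. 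When the remaining height is exactly $t+1$ (the first case $h=(k-1)(t+2)+t+1$) a parity/size argument shows the residual complex is again a join of $\Ind_r(B_t^m)$'s — but now $m\cdot(\dots)$ copies of them, because the top vertex of each residual $B_{t+1}^m$ must itself be included and forces the contractible-looking configuration to break into $m$ independent height-$t$ subtrees — giving $q_1=mr(m^0+m^{t+2}+\cdots+m^{(k-1)(t+2)})-1$; when the remaining height is exactly $t$ (the case $h=k(t+2)+t$) we land directly in \cref{lemma:indrbhm limited} on $m^0+m^{t+2}+\cdots+m^{k(t+2)}$ disjoint copies of $B_t^m$, giving $q_2$; for all other residues mod $t+2$ one connected component of the residual forest has height strictly between $t$ and $t+1$ in the sense relevant to \cref{lemma:indrbhm limited}, forcing a non-empty small component and hence contractibility via \cref{observation:basic properties}$(v)$.

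I would carry this out in the following order: (1) record the base cases $h\le t+1$, reading off the two sub-cases from \cref{observation:basic properties}$(i)$ and \cref{lemma:indrbhm limited}; (2) prove the main reduction lemma — after the prescribed sequence of element matchings on the bottom-level leaves, the critical cells are exactly the faces $\sigma$ with $\bigsqcup_{j=0}^{t+1}V_j \subseteq \sigma$ (or the analogous statement), mirroring \cref{claim:indrbhm limited} but now tracking how many levels survive; (3) use \cref{acyc5} / \cref{proposition:sequence of element matching} to conclude $\Ind_r(B_h^m)\simeq \Ind_r$ of the residual forest; (4) apply \cref{observation:basic properties}$(iv)$ and \cref{lemma:join of spheres}(2) together with \cref{lemma:indrbhm limited} to evaluate that join, and induct on $h$ (which decreases by $t+2$); (5) collect the three cases and the closed forms $p_1,q_1,p_2,q_2$ by summing the geometric series $m^0+m^{t+2}+\cdots$ that counts the accumulated factors at each stage.

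The main obstacle I expect is step (2): verifying that the chosen family of leaves produces an \emph{acyclic} matching whose critical set is exactly the residual complex, in particular checking that once a full height-$t$ subtree below a depth-$(t+1)$ vertex is present, no $r$-independent $\sigma$ can avoid being matched unless it already contains all of levels $0$ through $t+1$ — this is a delicate size count because $r$ sits so close to $\frac{m^{t+1}-1}{m-1}$, and the parity condition ("leftmost child $\Rightarrow$ odd index") used to guarantee that matched leaves are never themselves the matching vertex of a later step must be tracked carefully, exactly as in the proof of \cref{claim:indrbhm limited}. Bookkeeping the dimension shift — each forced level-$j$ vertex raises the dimension of every critical cell by one, and there are $\sum_{j=0}^{t+1}m^j$ of them per residual subtree in case one versus the cleaner count in case two — is the other place where care is needed, but it is routine once the reduction lemma is in hand.
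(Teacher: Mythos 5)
Your overall strategy --- element matchings on the bottom-level leaves, a collapse via \cref{acyc5} onto the $r$-independence complex of a smaller forest, then evaluation of a join of copies of $\Ind_r(B_t^m)$ using \cref{lemma:indrbhm limited} and \cref{lemma:join of spheres}, with induction in steps of $t+2$ --- is the right one. But your reduction lemma, which is the heart of the proof, is wrong. You propose to collapse $\Ind_r(B_h^m)$ onto $\Ind_r$ of the forest obtained by deleting the top $t+2$ levels $V_0,\dots,V_{t+1}$, leaving $m^{t+2}$ disjoint copies of $B_{h-(t+2)}^m$. The correct reduction (\cref{claim to reduce size of graph restricted mptree}) deletes only the \emph{single} level $V_{h-(t+1)}(B_h^m)$, the one sitting $t+1$ levels above the leaves; this leaves the disjoint union of \emph{one} copy of $B_{h-(t+2)}^m$ (the entire top part, which survives) and $m^{h-t}$ copies of $B_t^m$ (the bottom subtrees). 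The surviving top copy contributes a join factor at every stage of the iteration, and that is exactly where the geometric series $m^0+m^{t+2}+\cdots$ in $p_1,q_1,p_2,q_2$ comes from. Your reduction throws the top part away: iterating it for $h=k(t+2)+t$ yields $\binom{m^t-1}{s}^{m^{k(t+2)}}$ spheres of dimension $rm^{k(t+2)}-1$ instead of the theorem's exponent $m^0+m^{t+2}+\cdots+m^{k(t+2)}$ (already for $h=2t+2$ you get $m^{t+2}$ copies of $B_t^m$ where the answer requires $1+m^{t+2}$), and for $h=t+1$ it deletes all of $B_{t+1}^m$ and predicts a point instead of $\bigvee_{\binom{m^t-1}{s}^{m}}\mathbb{S}^{mr-1}$. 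Indeed, in your own case analysis for $h=k(t+2)+t$ you quote the correct count of ``$m^0+m^{t+2}+\cdots+m^{k(t+2)}$ disjoint copies of $B_t^m$'', but your recursion cannot produce the $m^0$ term: that term is precisely the single $B_t^m$ left at the top after $k$ rounds of the one-level deletion.

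The stronger collapse is not merely unproved but false: a face may contain a vertex of depth $h-(t+2)$ while meeting no vertex of depth $h-(t+1)$, and such faces are genuinely critical and carry homology, so $\Ind_r(B_h^m)$ does not deformation retract onto the subcomplex avoiding level $h-(t+2)$. The matching argument works only one level above the $B_t^m$'s: if $\sigma$ meets $V_{h-(t+1)}$, then since the non-leaf part of the height-$(t+1)$ subtree below such a vertex has $\frac{m^{t+1}-1}{m-1}>r$ vertices, some non-leaf vertex of that subtree is missing from $\sigma$, and the leftmost bottom leaf under its leftmost child can be toggled because the component it would join has at most $\frac{m^t-1}{m-1}\le r$ vertices; one level higher this size bound fails. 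Relatedly, the alternative you float in step (2), that the critical cells are the faces with $\bigsqcup_{j=0}^{t+1}V_j\subseteq\sigma$, is vacuous: that vertex set is connected with $\frac{m^{t+2}-1}{m-1}>r$ elements, so no face of $\Ind_r(B_h^m)$ contains it. The fix is to prove instead that the element matchings on the leaves $a_{h,1},a_{h,m+1},\dots$ restrict to a perfect acyclic matching on $R(V_{h-(t+1)}(B_h^m))=\{\sigma:\sigma\cap V_{h-(t+1)}(B_h^m)\neq\emptyset\}$, conclude $\Ind_r(B_h^m)\simeq\Ind_r(B_{h-(t+2)}^m)\ast\Ind_r(B_t^m)^{\ast m^{h-t}}$, and induct; the contractible cases then arise because the residual top tree $B_{h'}^m$ with $0\le h'<t$ is nonempty with at most $\frac{m^t-1}{m-1}\le r$ vertices, so it contributes a cone factor to the join.
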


\begin{proof}
The idea here is similar to that of \cref{exampleind4b3}. If $h \leq t$, then the result follows from \cref{lemma:indrbhm limited}. Let $h>t$. Here, we show that $\Ind_r(B_h^m)\simeq \Ind_r(G)$, where $G$ is disjoint union of perfect $m$-ary trees of height at most $t$. Recall that $V_j(B_h^m)$ denotes the set of vertices of $B_h^m$ of depth $j$. 

\begin{claim}\label{claim to reduce size of graph restricted mptree}
$\Ind_r(B_h^m) \simeq \Ind_r(B_h^m - V_{h-(t+1)}(B_h^m)))$.
\end{claim} 

\begin{proof}[Proof of \cref{claim to reduce size of graph restricted mptree}]
Let $R(V_{h-(t+1)}(B_h^m))=\{\sigma \in \Ind_r(B_h^m) : \sigma \cap V_{h-(t+1)}(B_h^m) \neq \emptyset\}$. Clearly, $\Ind_r(B_h^m)\setminus R(V_{h-(t+1)}(B_h^m)) = \Ind_r(B_h^m - V_{h-(t+1)}(B_h^m))$. To prove \cref{claim to reduce size of graph restricted mptree}, using \cref{acyc5}, it is enough to define a perfect matching on $R(V_{h-(t+1)}(B_h^m))$. We do so by defining a sequence of element matching on $\Delta_0=\Ind_r(B_h^m)$ using vertices $\{a_{h,1},a_{h,m+1},\dots,a_{h,m^h- (m-1)}\}$ as in \Cref{eq:elementmatchingsequence}. 

 We now analyze the set of critical cells $\Delta_{m^{h-1}}$ corresponding to the matching $\bigcup\limits_{i=1}^{m^{h-1}}M(a_{h,mi-(m-1)})$ and prove that $\Delta_{m^{h-1}}=\Ind_r(B_h^m)\setminus R(V_{h-(t+1)}(B_h^m))$. Which, along with \cref{acyc5}, will imply \cref{claim to reduce size of graph restricted mptree}. 
 
 Since $N(a_{h,mi-(m-1)})\subseteq R(V_{h-(t+1)}(B_h^m))$ for all $i \in \{1,2,\dots,m^{h-1}\}$, $\Ind_r(B_h^m)\setminus R(V_{h-(t+1)}(B_h^m))\subseteq \Delta_{m^{h-1}}$. To show that $\Delta_{m^{h-1}} \subseteq \Ind_r(B_h^m)\setminus R(V_{h-(t+1)}(B_h^m))$, it is enough to show that if $\sigma \in \Ind_r(B_h^m)$ and $\sigma \cap V_{h-(t+1)}(B_h^m) \neq \emptyset$ then $\sigma \in N(a_{h,mi-(m-1)})$ for some $i \in \{1,2,\dots,m^{h-1}\}$ {\itshape i.e.}, $\sigma \notin \Delta_{m^{h-1}}$. We prove this by contradiction.

Let $\sigma_1 \in \Ind_r(B_h^m)$ such that $\sigma_1 \cap V_{h-(t+1)}(B_h^m) \neq \emptyset$ and $\sigma_1 \in \Delta_{m^{h-1}}$. Without loss of generality, assume that $a_{h-(t+1),\ell}$ is the smallest vertex of $V_{h-(t+1)}(B_h^m)$ such that $a_{h-(t+1),\ell} \in \sigma_1$. Let $B(a_{h-(t+1),\ell},B_h^m)$ be the sub-tree of $B_h^m$ rooted at $a_{h-(t+1),\ell}$. Let $S$ denote the set of all non-leaf vertices of $B(a_{h-(t+1),\ell},B_h^m)$, {\itshape i.e.}, $S=\bigsqcup\limits_{j=1}^{t+1} V_{h-j}(B_h^m) \bigcap V(B(a_{h-(t+1),\ell},B_h^m))$. Clearly, $B(a_{h-(t+1),\ell},B_h^m)$ is a perfect $m$-ary tree of height $t+1$ and the cardinality of $S$ is $\frac{m^{t+1}-1}{m-1}$. Since $B_h^m[S]$ is a connected graph and $r<\frac{m^{t+1}-1}{m-1}$, $S\nsubseteq \sigma_1$. Let $a_{i_1,j_1}$ be the smallest element of $S$ such that $a_{i_1,j_1} \notin \sigma_1$. Since $a_{i_1,j_1} \in S$ and $a_{h-(t+1),\ell} \in \sigma_1$, we get that $i_1 \in \{h-t,h-t+1,\dots,h-1\}$. Let $a_{i_1+1,j_2}$ be the left most child of $a_{i_1,j_1}$ and $a_{h,m\ell_1-(m-1)}$ be the left most leaf of perfect $m$-ary sub-tree rooted at $a_{i_1+1,j_2}$. Since $\sigma_1 \in \Delta_{m^{h-1}}$, $\sigma \in \Delta_{\ell_1-1}$. Observe that the cardinality of the sub-tree rooted at $a_{i_1+1,j_2}$ is at most $\frac{m^t-1}{m-1}$. Hence, $\sigma_1 \in N(a_{h,m\ell_1-(m-1)})$ which is a contradiction. This completes the proof of \cref{claim to reduce size of graph restricted mptree}.
\end{proof}

We prove \cref{theorem:indrbhm general} using induction on $h$. 

\begin{itemize}

\item[{\bfseries  Step $1$:}]  In this step, we prove the result for $h \in \{t+1,t+2,\dots,(t+2)+t\}$.

From \cref{claim to reduce size of graph restricted mptree}, we see that $\Ind_r(B_h^m) \simeq \Ind_r(B_h^m - V_{h-(t+1)}(B_h^m)))$. Observe that $B_h^m - V_{h-(t+1)}(B_h^m)$ is disjoint union of $m(m^{h-(t+1)})$  copies of perfect $m$-ary trees of height $t$ and one perfect $m$-ary tree of height $h-(t+2)$ (here, by $B_{-1}^m$ we mean empty graph). Therefore, using \cref{observation:basic properties}$(iv)$ and \cref{lemma:indrbhm limited}, we get the following equivalence.

\begin{equation*}
\begin{split}
    \Ind_r(B_h^m) & \simeq \Ind_r(\underbrace{B_t^m \sqcup \dots \sqcup B_t^m}_{m(m^{h-(t+1)})\text{-copies}} \sqcup B_{h-(t+2)}^m) \\ 
    & \simeq \underbrace{\Ind_r(B_t^m) \ast \dots \ast \Ind_r(B_t^m)}_{m(m^{h-(t+1)})\text{-copies}} \ast \Ind_r(B_{h-(t+2)}^m).
    \end{split}
\end{equation*} 
Which implies that,
\begin{equation*}
\begin{split}
    \Ind_r(B_h^m) & \simeq \begin{cases}
            \underbrace{\Ind_r(B_t^m) \ast \dots \ast \Ind_r(B_t^m)}_{m \text{-copies}} \ast \Ind_r(B_{-1}^m), & \text{if } h=t+1, \\
            \underbrace{\Ind_r(B_t^m) \ast \dots \ast \Ind_r(B_t^m)}_{(m^{t+2})\text{-copies}} \ast \Ind_r(B_{t}^m), & \text{if } h=(t+2)+t, \\
            \underbrace{\Ind_r(B_t^m) \ast \dots \ast \Ind_r(B_t^m)}_{m(m^{h-(t+1)})\text{-copies}} \ast \{\text{point}\}, & \text{if }  t+1 < h <(t+2)+t. \\
            \end{cases} 
    \end{split}
\end{equation*}
Thus, \cref{lemma:indrbhm limited} and \cref{lemma:join of spheres} implies the result, {\itshape i.e.},
\begin{equation*}
\begin{split}
 \Ind_r(B_h^m) & \simeq \begin{cases}
            \bigvee\limits_{\binom{m^t-1}{s}^{m}} \mathbb{S}^{mr-1}, & \text{if } h=t+1, \\
             \bigvee\limits_{\binom{m^t-1}{s}^{(m^0+m^{t+2})}} \mathbb{S}^{r(m^0+m^{t+2})-1}, & \text{if } h=(t+2)+t, \\
            \{\text{point}\}, & \text{if }  t+1 < h <(t+2)+t. \\
            \end{cases}
\end{split}
\end{equation*}

\item[{\bfseries  Step $2$:}]  In this step, we prove the result for $h \in \{(t+2)+t+1,\dots,2(t+2)+t\}$.

Following similar method as in step $1$, we get the following equivalence,

$$\Ind_r(B_h^m)  \simeq \underbrace{\Ind_r(B_t^m) \ast \dots \ast \Ind_r(B_t^m)}_{m(m^{h-(t+1)})\text{-copies}} \ast \Ind_r(B_{h-(t+2)}^m)$$

Observe that $h-(t+2)$ is in $\{t+1,t+2,\dots,(t+2)+t\}$. Thus, result of Step $1$ implies the following.

\begin{equation*}
\begin{split}
    \Ind_r(B_h^m) & \simeq \begin{cases}
            \underbrace{\Ind_r(B_t^m) \ast \dots \ast \Ind_r(B_t^m)}_{m(m^0+m^{t+2})\text{-copies}} \ast \Ind_r(B_{-1}^m), & \text{if } h=(t+2)+t+1, \\
            \underbrace{\Ind_r(B_t^m) \ast \dots \ast \Ind_r(B_t^m)}_{(m^{t+2}+m^{2(t+2)})\text{-copies}} \ast \Ind_r(B_{t}^m), & \text{if } h=2(t+2)+t, \\
            \underbrace{\Ind_r(B_t^m) \ast \dots \ast \Ind_r(B_t^m)}_{m(m^{h-(t+1)})\text{-copies}} \ast \{\text{point}\}, & \text{if }  (t+2)+t+1 < h <2(t+2)+t. \\
            \end{cases} \\
     & \simeq \begin{cases}
            \underbrace{\Ind_r(B_t^m) \ast \dots \ast \Ind_r(B_t^m)}_{m(m^0+m^{t+2})\text{-copies}},  & \text{if } h=(t+2)+t+1, \\
            \underbrace{\Ind_r(B_t^m) \ast \dots \ast \Ind_r(B_t^m)}_{(m^0+m^{t+2}+m^{2(t+2)})\text{-copies}}, & \text{if } h=2(t+2)+t, \\
            \{\text{point}\}, & \text{if }  (t+2)+t+1 < h <2(t+2)+t. \\
            \end{cases} \\
    \end{split}
\end{equation*}

Using \cref{lemma:indrbhm limited} and \cref{lemma:join of spheres}, we get the result, {\itshape i.e.},
 \begin{equation*}
\begin{split}
    \Ind_r(B_h^m)  & \simeq \begin{cases}
            \bigvee\limits_{\binom{m^t-1}{s}^{m(m^0+m^{t+2})}} \mathbb{S}^{mr(m^0+m^{t+2})-1}, & \text{if } h=(t+2)+t+1, \\
             \bigvee\limits_{\binom{m^t-1}{s}^{(m^0+m^{t+2}+m^{2(t+2)})}} \mathbb{S}^{r(m^0+m^{t+2}+m^{2(t+2)})-1}, & \text{if } h=2(t+2)+t, \\
            \{\text{point}\}, & \text{if }  (t+2)+t+1 < h <2(t+2)+t. \\
            \end{cases}
    \end{split}
\end{equation*}

\item[{\bfseries  Step $k$:}]  In this step, we prove the result for $h \in \{(k-1)(t+2)+t+1,\dots,k(t+2)+t\}$ where $k\geq 3$.

The proof here is exactly similar to that of Step $2$. Therefore,
\begin{equation*}
    \Ind_r(B_h^m)  \simeq \underbrace{\Ind_r(B_t^m) \ast \dots \ast \Ind_r(B_t^m)}_{m(m^{h-(t+1)})\text{-copies}} \ast \Ind_r(B_{h-(t+2)}^m) \\
\end{equation*}
Thus, result of Step $k-1$ implies the following equivalence.
\begin{equation*}
\begin{split}
    \Ind_r(B_h^m) & \simeq \begin{cases}
            \underbrace{\Ind_r(B_t^m) \ast \dots \ast \Ind_r(B_t^m)}_{m(m^0+m^{t+2}+\dots+m^{(k-1)(t+2)})\text{-copies}} \ast \Ind_r(B_{-1}^m), & \text{if } h=(k-1)(t+2)+t+1, \\
            \underbrace{\Ind_r(B_t^m) \ast \dots \ast \Ind_r(B_t^m)}_{(m^{t+2}+m^{2(t+2)+\dots+m^{k(t+2)}})\text{-copies}} \ast \Ind_r(B_{t}^m), & \text{if } h=k(t+2)+t, \\
            \underbrace{\Ind_r(B_t^m) \ast \dots \ast \Ind_r(B_t^m)}_{m(m^{h-(t+1)})\text{-copies}} \ast \{\text{point}\}, & \text{if }  (k-1)(t+2)+t+1 < h <k(t+2)+t. \\
            \end{cases} \\
     & \simeq \begin{cases}
            \underbrace{\Ind_r(B_t^m) \ast \dots \ast \Ind_r(B_t^m)}_{m(m^0+m^{t+2}+\dots+m^{(k-1)(t+2)})\text{-copies}},  & \text{if } h=(k-1)(t+2)+t+1, \\
            \underbrace{\Ind_r(B_t^m) \ast \dots \ast \Ind_r(B_t^m)}_{(m^0+m^{t+2}+m^{2(t+2)+\dots+m^{k(t+2)})}\text{-copies}}, & \text{if } h=k(t+2)+t, \\
            \{\text{point}\}, & \text{if }  (k-1)(t+2)+t+1 < h <k(t+2)+t. \\
            \end{cases} \\
    \end{split}
\end{equation*}
Hence, using \cref{lemma:indrbhm limited} and \cref{lemma:join of spheres}, we get the result (recall that $t$ is fixed).
\vspace*{-0.9cm}\end{itemize}
%
\end{proof}

\section{Concluding remarks}\label{sec_end}

In this section, we list a few interesting questions and conjectures. 

\subsection{Universality of higher independence complexes}
It was shown in \cite{EH06} that every simplicial complex arising as the barycentric subdivision of a CW complex may be represented as the $1$-independence complex of a graph. 
One can investigate whether a similar statement holds for for all $r$-independence complexes. 
From the definition it is clear that $\Ind_r(G)$ contains all subsets of $V(G)$ of cardinality at most $r+1$ implying that $\Ind_r(G)$ is always ($r-2$)-connected. 
Moreover, the following example (which was done using SAGE) tells us that the homology groups of $r$-independence complexes of graphs are may have torsion. 
Let $M_s(G)$ denotes the $s^{\text{th}}$ generalized mycielskian of a graph $G$. Then, 
\begin{equation*}
    \tilde{H}_i(\Ind_2(M_4(C_4))) = 
    \begin{cases}
    \Z_2 & \mathrm{~if~} i =3,\\
    \Z^{45} & \mathrm{~if~} i =5,\\
    0 & \mathrm{~otherwise}.\\
    \end{cases}
\end{equation*}

One can now ask the following question.

\begin{question}
Given $r\geq 2$ and an $(r-2)$-connected simplicial complex $X$, does there exists a graph $G$ such that $\Ind_r(G)$ is homeomorphic to $X$?
\end{question}

\subsection{Trees}
Kawamura \cite{Kaw10a} computed the exact homotopy of type of $1$-independence complexes of trees and showed that they are either contractible or homotopy equivalent to a sphere.
In \cref{sec_trees}, it was shown that the homotopy type of higher independence complexes of $m$-ary trees is also a wedge of spheres. 
So, one might hope for a similar result for the class of all trees as well. 

In another project \cite{DSS19} with Samir Shukla, authors have determined the homotopy type of $\Ind_r(G)$ for chordal graphs $G$ (note that class of trees is a subclass of chordal graphs). 
A {\itshape chordal graph} is a graph in which every cycle on more than $3$ vertices has a chord. 
Homotopy type of $1$-independence complexes of chordal graphs was studied by Kawamura in \cite{Kaw10b}. 
Here, we only announce our result, without proving it.

\begin{theorem}[\cite{DSS19}]
The higher independence complexes of chordal graphs are either contractible or homotopy equivalent to a  wedge of spheres.
\end{theorem}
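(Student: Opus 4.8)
The plan is to argue by induction on $|V(G)|$, exploiting the classical facts that every chordal graph has a simplicial vertex and that induced subgraphs of chordal graphs are again chordal. First I would clear away the easy reductions. If $G$ is disconnected, split it as $G_1 \sqcup G_2$ with both parts chordal; by \cref{observation:basic properties}$(iv)$ we have $\Ind_r(G) \simeq \Ind_r(G_1) \ast \Ind_r(G_2)$, and the inductive hypothesis together with \cref{lemma:join of spheres} closes this case, while \cref{observation:basic properties}$(v)$ handles any $G$ with a component of size at most $r$. If $G$ is complete on $n$ vertices, \cref{observation:basic properties}$(iii)$ gives $\Ind_r(G) = (\Delta^{n-1})^{(r-1)}$, a wedge of $\binom{n-1}{r}$ spheres of dimension $r-1$. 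So we may assume $G$ is connected, non-complete, and has more than $r$ vertices; then $G$ has a simplicial vertex $v$ with $\deg(v) \ge 1$, and $K := N[v]$ is a clique.

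The core step is to express $\Ind_r(G)$ in terms of the strictly smaller chordal graphs $G - v$ and $G - N[v]$, whose $r$-independence complexes are contractible or wedges of spheres by induction. Write $\Ind_r(G) = \operatorname{del}_v \cup \operatorname{star}_v$, where $\operatorname{del}_v = \Ind_r(G - v)$, where $\operatorname{star}_v$ is a cone (hence contractible), and where $\operatorname{del}_v \cap \operatorname{star}_v = \operatorname{lk}_v$; then $\Ind_r(G)$ is the mapping cone of the inclusion $\operatorname{lk}_v(\Ind_r(G)) \hookrightarrow \Ind_r(G - v)$, and everything hinges on identifying the link. Because $K$ is a clique, $N(v) \cap \sigma$ always lies in a single component of $G[\sigma]$, so $\sigma \in \operatorname{lk}_v$ iff either $\sigma$ avoids $N(v)$ — equivalently $\sigma \in \Ind_r(G - N[v])$ — or the component of $G[\sigma]$ meeting $N(v)$ has at most $r-1$ vertices. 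I would then build an acyclic matching on $\operatorname{lk}_v$, by an element matching driven by a fixed $u \in N(v)$ and the clique structure of $K$ (in the style of \cref{proposition:sequence of element matching} and the patchwork theorem \cref{theorem:patchwork theorem}), to show that $\operatorname{lk}_v$ collapses to, or is at least homotopy equivalent to, $\Ind_r(G - N[v])$.

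Granting this, $\Ind_r(G)$ is the mapping cone of a map homotopic to the inclusion $\iota : \Ind_r(G - N[v]) \hookrightarrow \Ind_r(G - v)$, and the last task is to check that this cone stays contractible or a wedge of spheres. I would split into the subcase where $\Ind_r(G - N[v])$ is contractible (then $\Ind_r(G) \simeq \Ind_r(G - v)$), the subcase where $\Ind_r(G - v)$ is contractible (then $\Ind_r(G) \simeq \Sigma \Ind_r(G - N[v])$, still a wedge of spheres), and the remaining case, where one must control $\iota$ — for instance show it is null-homotopic, so that the cone splits as $\Ind_r(G - v) \vee \Sigma \Ind_r(G - N[v])$ — all the while tracking dimensions so the conclusion matches the closed-form answer.

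The main obstacle is precisely this identification of the link: for $r \ge 2$ the complex $\operatorname{lk}_v(\Ind_r(G))$ is \emph{not} itself an independence complex of a smaller graph (unlike the classical case $r = 1$, where it equals $\Ind_1(G - N[v])$), owing to the ``component of size at most $r-1$'' condition; pinning down its homotopy type and, simultaneously, controlling the attaching map $\iota$ so that the pushout never leaves the class ``contractible or wedge of spheres'' is where the real work lies. If the link proves too unwieldy to handle head-on, a reasonable alternative is to induct along a clique-tree decomposition of $G$, gluing one maximal clique at a time along a separating clique and propagating the homotopy type through the resulting Mayer–Vietoris pushouts, following the pattern of Kawamura's treatment of the $r = 1$ case in \cite{Kaw10b}.
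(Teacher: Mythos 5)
A preliminary remark: the paper does not prove this statement at all --- it is only \emph{announced}, with the proof deferred to \cite{DSS19} --- so there is no in-paper argument to compare yours against, and your proposal has to stand on its own. It does not, because the step on which everything hinges is false. You propose to show that $\operatorname{lk}_v(\Ind_r(G))$, for $v$ a simplicial vertex, collapses to (or is homotopy equivalent to) $\Ind_r(G-N[v])$. Already for $G=P_3$ with vertices $a\text{--}b\text{--}c$, $r=2$ and $v=a$ this fails: $\sigma\in\operatorname{lk}_a$ iff $\sigma\cup\{a\}\in\Ind_2(P_3)$, which admits $\{b\}$ and $\{c\}$ but excludes $\{b,c\}$, so $\operatorname{lk}_a\simeq\mathbb{S}^0$, whereas $\Ind_2(G-N[a])=\Ind_2(\{c\})$ is a point. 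The discrepancy is not harmless: since the deletion $\Ind_2(G-a)$ is contractible, the correct link gives $\Ind_2(P_3)\simeq\Sigma\,\mathbb{S}^0=\mathbb{S}^1$ (consistent with \cref{higher_ind_path}), while your identification would predict a contractible complex. Your own membership criterion for the link (either $\sigma$ avoids $N(v)$, or the unique component of $G[\sigma]$ meeting the clique $N(v)$ has at most $r-1$ vertices) already shows the link is strictly larger than $\Ind_r(G-N[v])$ for $r\geq 2$, and the example shows the extra faces can carry essential homology, so they cannot in general be matched away.

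Even granting a correct determination of the link, the final step is not closed: knowing that $\operatorname{lk}_v$ and $\Ind_r(G-v)$ are each contractible or wedges of spheres does not make the mapping cone of the inclusion one --- you need the attaching map to be null-homotopic (or some equivalent control over the connectivity/dimensions involved), and you state this as a goal rather than establish it. You flag both of these points yourself as ``where the real work lies,'' which is honest, but it means the proposal is an outline of a strategy whose two load-bearing steps are, respectively, false as stated and unproved. The fallback via a clique-tree decomposition in the style of \cite{Kaw10b} faces the same two obstructions (identifying the intersection of the pieces and controlling the gluing map) and is likewise only named, not carried out.
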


However, the following question is still unanswered.

\begin{question}
Given $r\geq 2$ and a tree $T$, find a formula for the number of spheres in the homotopy decomposition of $\Ind_r(T)$?
\end{question}

\subsection{Shellable higher independence complexes} 
A popular research direction in topological combinatorics is to determine whether the given simplicial complex is shellable or not.
A shellable simplicial complex has the homotopy type of a wedge of spheres.
The results proved in the present article point towards the following question. 
\begin{question}
For which classes of graphs, the higher independence complexes are shellable?
\end{question}

In \cite{wood09}, Woodroofe showed that $1$-independence complexes of chordal graphs are vertex-decomposable (hence shellable \cite[Theorem 1.2]{TVH08}). 
Based on our computations of $\Ind_r(G)$ for various chordal graphs and $r\geq 2$, using Macaulay2, we expect a similar result for $r$-independence complexes of chordal graphs. 


\begin{conjecture}
If $G$ is a chordal graph then the complex $\Ind_r(G)$ is vertex-decomposable for each $r\geq 2$.
\end{conjecture}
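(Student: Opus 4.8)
The plan is to prove the stronger statement --- that $\Ind_r(G)$ is vertex-decomposable whenever $G$ is chordal --- by induction on $|V(G)|$, using the Bj\"{o}rner--Wachs recursive definition: a simplicial complex $\K$ is \emph{vertex-decomposable} if it is a simplex (including $\{\emptyset\}$) or there is a \emph{shedding vertex} $v$, meaning that the deletion $\K\setminus v=\{\sigma\in\K:v\notin\sigma\}$ and the link $\mathrm{lk}_\K(v)=\{\sigma\in\K:v\notin\sigma,\ \sigma\cup\{v\}\in\K\}$ are both vertex-decomposable and no facet of $\K\setminus v$ is a face of $\mathrm{lk}_\K(v)$. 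For the base and reduction steps: if $G$ is complete then $\Ind_r(G)$ is a skeleton of a simplex by \cref{observation:basic properties}$(iii)$, which is vertex-decomposable; if $G$ is disconnected, write $G=G_1\sqcup G_2$ with both parts nonempty, so that $\Ind_r(G)=\Ind_r(G_1)\ast\Ind_r(G_2)$ is a join of two vertex-decomposable complexes (inductive hypothesis), and joins of vertex-decomposable complexes are vertex-decomposable. Hence it suffices to treat a connected chordal graph $G$ that is not complete, for which Dirac's theorem supplies a simplicial vertex $v$ (indeed two non-adjacent ones).

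The deletion step is routine: a subset of $V(G)\setminus\{v\}$ is $r$-independent in $G$ iff it is $r$-independent in the induced subgraph $G-v$, so $\Ind_r(G)\setminus v=\Ind_r(G-v)$, and $G-v$ is a chordal graph with fewer vertices, hence vertex-decomposable by induction. For the link, the fact that $N(v)$ is a clique gives a usable description. For any $\sigma\subseteq V(G)\setminus\{v\}$, at most one connected component $D_\sigma$ of $(G-v)[\sigma]$ meets $N(v)$ (two vertices of $N(v)\cap\sigma$ would be adjacent), and a short check shows that $\sigma\cup\{v\}$ is $r$-independent in $G$ exactly when $\sigma$ is $r$-independent in $G-v$ and either $\sigma\cap N(v)=\emptyset$ or $|D_\sigma|\leq r-1$. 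Thus $\mathrm{lk}_{\Ind_r(G)}(v)$ is the subcomplex of $\Ind_r(G-v)$ obtained by discarding the faces whose component through $N(v)$ has $r$ vertices; for $r=1$ this recovers $\mathrm{lk}_{\Ind_1(G)}(v)=\Ind_1(G-N[v])$, as used by Woodroofe \cite{wood09}.

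The two genuinely hard points --- and where I expect the work to lie --- are (i) that this constrained link complex is vertex-decomposable, and (ii) that $v$ can be chosen to be a shedding vertex, i.e.\ so that $\sigma\cup\{v\}$ is not $r$-independent for any maximal $r$-independent set $\sigma$ of $G-v$. Point (i) is the new feature for $r\geq 2$: the link is no longer the $r$-independence complex of an induced subgraph. I would address it by enlarging the induction to the class of ``$r$-independence complexes of a chordal graph relative to a distinguished clique, with a prescribed upper bound on the component through that clique'', showing this class is closed under taking deletions and links at suitable vertices, and establishing vertex-decomposability for the whole class by a simultaneous induction; an alternative is to realize the constrained link as $\Ind_r$ of a genuine chordal graph obtained from $G-v$ by a local modification at $N[v]$ (attaching gadgets that truncate the admissible component through the clique). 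For point (ii), an arbitrary simplicial vertex need not be shedding --- a maximal $r$-independent set of $G-v$ disjoint from $N(v)$ would extend by $v$ --- so I would use the ``two non-adjacent simplicial vertices'' from Dirac's theorem to choose $v$ lying in a pendant maximal clique of a clique tree of $G$; this should force every maximal $r$-independent set of $G-v$ avoiding $v$ to carry an $r$-vertex component through $N(v)$, which then obstructs the addition of $v$.

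Finally, the conjecture yields more than it asserts: vertex-decomposable complexes are shellable \cite{TVH08} and shellable complexes have the homotopy type of a wedge of spheres, so a positive answer would reprove and refine the wedge-of-spheres statement for chordal graphs announced in \cite{DSS19}. One could also attempt the cleaner-looking route through the hypergraph model --- $\Ind_r(G)$ is the independence complex of the hypergraph whose edges are the connected $(r+1)$-vertex induced subgraphs of $G$ --- together with known elimination-order criteria for vertex-decomposability of such complexes; one expects the chordality of $G$ to provide the required elimination order, but making this precise for $r\geq 2$ appears no less delicate than the approach above.
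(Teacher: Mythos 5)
This statement is posed in the paper as an open conjecture --- the authors offer only computational evidence (Macaulay2 experiments) and no proof --- so there is no argument in the paper to compare yours against; I can only judge the attempt on its own terms, and as written it is a proof plan rather than a proof. The parts you do carry out are correct: $\Ind_r(G)\setminus v=\Ind_r(G-v)$; the description of $\mathrm{lk}(v)$ for a simplicial vertex $v$ via the unique component $D_\sigma$ of $(G-v)[\sigma]$ meeting the clique $N(v)$; the reduction of disconnected graphs to joins; and the complete-graph base case via \cref{observation:basic properties}$(iii)$. But you yourself flag the two steps where all the difficulty lives --- vertex-decomposability of the constrained link, and the existence of a shedding vertex --- and neither is resolved, so the conjecture remains open after your argument.

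Moreover, the strategy you sketch for point (ii) fails concretely. Take $G=K_{1,3}$ with centre $b$ and leaves $a,c,d$, and $r=2$; every leaf is a simplicial vertex lying in a pendant maximal clique of the clique tree. With $v=a$, the set $\{c,d\}$ is a facet of $\Ind_2(G-a)$ (it cannot be enlarged by $b$, since $\{b,c,d\}$ induces a $3$-vertex component), it avoids $N(a)=\{b\}$, and $\{a,c,d\}$ is $2$-independent; so $\{c,d\}$ is a facet of the deletion that is a face of the link, and $a$ is not a shedding vertex --- contrary to your hope that the pendant-clique choice forces an $r$-vertex component through $N(v)$. The vertex that does work here is the \emph{neighbour} $b$ of the simplicial vertices, exactly as in Woodroofe's $r=1$ argument (\cite{wood09}, shedding on $u$ with $N[v]\subseteq N[u]$); but then the link at $b$ is precisely the constrained complex of your point (i), which is not the $r$-independence complex of any induced subgraph, so the induction does not close without first constructing and analysing the enlarged class of ``relative'' complexes you mention --- and that construction, including closure under deletions and links and the verification of shedding within the larger class, is the actual content of the conjecture and is absent here.
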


\subsection{Grid graphs} 
For $m,n \geq 2$, a rectangular {\itshape grid graph}, denoted $G_{m,n}$ is a graph with $V(G_{m,n}) = \{(i,j): i \in [m],~j\in [n]\}$ as its vertex set and $(i,j)$ is adjacent to $(i_1,j_1)$ in $G_{m,n}$ if and only if either `$i_1=i$ and $j_1=j+1$' or `$j_1=j$ and $i_1=i+1$'. 
In the last decade, $1$-independence complexes of grid graphs were studied in detail (see \cite{BLN08, BH17, Jon10}). 
We have analysed the complex $\Ind_r(G_{2,n})$ (for small values of $n$) and also computed their homology groups of using SageMATH \cite{Sage} (see \cref{table:grid graph 2n} below). 
Based on our calculations, we make the following conjecture.

\begin{conjecture}\label{conj:grid graphs}
For all $r\geq n$, $\Ind_r(G_{2,n})$ is either contractible  or homotopy equivalent to a wedge of spheres of dimension $r-1$. 
\end{conjecture}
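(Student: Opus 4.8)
The plan is to attack \cref{conj:grid graphs} by induction on $n$ using discrete Morse theory, in the spirit of the arguments for path and cycle graphs (\cref{prop:perfect morse fuction on path graph} and \cref{theorem:indr of cycle graph}). Write the vertices of $G_{2,n}$ as $(i,j)$ with $i\in\{1,2\}$ and $j\in[n]$, and call $C_j=\{(1,j),(2,j)\}$ the $j$-th column. By \cref{observation:basic properties}$(i)$ the complex $\Ind_r(G_{2,n})$ is contractible once $r\ge 2n$, so only the range $n\le r\le 2n-1$ needs genuine work, and for $r$ in that range one expects (and must prove) that the homotopy type is a wedge of $(r-1)$-spheres, which is the minimal possible dimension since $\Ind_r(G_{2,n})$ is automatically $(r-2)$-connected.

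First I would construct an acyclic matching on $\Ind_r(G_{2,n})$ by a sequence of element matchings along one rail, using the vertices $(2,n),(2,n-1),\dots,(2,1)$ in that order (from a corner inward) as in \Cref{eq:elementmatchingsequence}; this is acyclic by \cref{proposition:sequence of element matching}. The decisive step is to identify the critical cells: a face $\sigma$ survives the matching step at $(2,j)$ only when $\sigma\cup\{(2,j)\}$ fails to be $r$-independent, i.e. adjoining $(2,j)$ fuses the components of $G_{2,n}[\sigma]$ meeting $\{(1,j),(2,j-1),(2,j+1)\}$ into one of size $>r$. Because $r\ge n$ this constraint is strong: it should force any critical $\sigma$ to contain essentially all of row $1$ together with long contiguous stretches of row $2$, so that $G_{2,n}[\sigma]$ is connected and hence (as in \cref{theorem:fully whiskered graph} and \cref{theorem:leaf at every vertex general graph}) $|\sigma|=r$ exactly. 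If this can be pushed through, every critical cell has dimension $r-1$ and \cref{acyc4} gives the conclusion directly; counting the critical cells should yield a transfer-matrix type linear recurrence for the number of spheres. A variant worth running in parallel uses the Patchwork Theorem (\cref{theorem:patchwork theorem}) with an order-preserving map from $\cF(\Ind_r(G_{2,n}))$ onto a short chain recording which of the last columns a face meets, whose fibers, after splitting off a forced subface, are up to join with a simplex copies of $\Ind_r$ of smaller $2\times n'$ grids plus some short paths; the inductive hypothesis together with \cref{lemma:join of spheres} then closes the argument.

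The hard part will be the bookkeeping of critical cells. Unlike paths and cycles, deleting a column or a single vertex from $G_{2,n}$ does not yield a disjoint union of grids: it produces a grid with a defect, in which a connected component can bend around the missing vertex, and there is no clean closed form for the $r$-independence complexes of these defect graphs. One must therefore choose the matching (or the map $\varphi$) so that every fiber is already understood inductively, and, crucially, verify that no critical cells of dimension other than $r-1$ appear. Establishing that stability (equivalently, that all the relevant defect complexes are themselves contractible or wedges of $(r-1)$-spheres) is where I expect the real difficulty to lie, and it is exactly there that the hypothesis $r\ge n$ has to be exploited; I also expect that the number of $(r-1)$-spheres can only be described as the solution of a linear recurrence rather than in simple closed form, which is presumably why the statement is phrased the way it is.
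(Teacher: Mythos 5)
The statement you are addressing is \cref{conj:grid graphs}, which the paper presents as a \emph{conjecture}: it is supported only by the SageMath computations in \cref{table:grid graph 2n}, and no proof is given anywhere in the article. So there is no argument of the authors to compare yours against; the only question is whether your proposal itself constitutes a proof, and it does not. What you have written is a plan of attack, and you yourself flag the decisive step as unresolved: the assertion that the element matching along the rail $(2,n),(2,n-1),\dots,(2,1)$ leaves critical cells only in dimension $r-1$ is exactly the content of the conjecture, and your justification for it (``adjoining $(2,j)$ fuses components into one of size $>r$, so any critical $\sigma$ should contain essentially all of row $1$ \dots'') is a heuristic, not a verified claim. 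In the analogous arguments that do appear in the paper (\cref{theorem:fully whiskered graph}, \cref{theorem:leaf at every vertex general graph}, \cref{theorem:indr of cycle graph}), the corresponding step is carried out by an explicit case analysis of exactly which faces lie in each $N(x_i)$; nothing of that kind is present here, and for the grid it is genuinely delicate because a face can fail to extend by $(2,j)$ for reasons involving components that wind through both rows far from column $j$.

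The second difficulty you correctly identify --- that deleting a column or vertex of $G_{2,n}$ produces a ``defect'' graph whose $r$-independence complex is not covered by any result in the paper --- is fatal to the Patchwork variant as stated: the fibers of your proposed map $\varphi$ are not joins of complexes you control inductively, so the induction has no base to stand on. Until you either (a) explicitly determine the critical cells of your rail matching and prove they all have cardinality $r$, or (b) identify a family of auxiliary graphs closed under the column-deletion operation for which you can prove the wedge-of-$(r-1)$-spheres statement simultaneously, the argument is an outline of where a proof might live rather than a proof. Note also that the restriction $r\ge n$ is essential (the table shows, e.g., $\tilde H_*(\Ind_2(G_{2,9}))$ nonzero in two different degrees), so any successful argument must use that hypothesis in a concrete combinatorial way --- most plausibly to show that a critical face must induce a connected subgraph, forcing $|\sigma|=r$ as in \cref{theorem:fully whiskered graph}; making that deduction rigorous is the missing idea.
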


From \cref{table:grid graph 2n} (used SageMath \cite{Sage} for these computations), we also see that $\tilde{H}_i(G_{2,9})$ is non-trivial in two different dimensions (the notation $i:\Z^p$ means $\tilde{H}_i(\Ind_r(G_{2,n}))=\Z^p$).
This raises the following question.

\begin{question}
What is the homotopy type of higher independence complexes of grid graphs $G_{m,n}$?
\end{question}

\begin{table}[H]
	\centering
	\begin{tabular}{ |c|c|c|c|c|c|c|c|c|c| }
		\hline
		\backslashbox{$n$}{$r$} &{$1$} &{$2$} & {$3$} &{$4$}  &{$5$}& {$6$} &{$7$}& {$8$} & {$9$} \\ 
		\hline \hline &&&&&&&&& \\[-1em]
		
		{$1$} &{$0:\Z$} & $0$ &{$0$}& {$0$} &$0$  &{$0$}& $0$ &{$0$} & {$0$} \\ 
		\hline &&&&&&&&& \\[-1em]
		
		{$2$} &{$0:\Z$} & $1:\Z^3$ &{$2:\Z$}& {$0$} &$0$  &{$0$}& $0$ &{$0$} & {$0$} \\ 
		\hline &&&&&&&&& \\[-1em]
		
		{$3$} &{$1:\Z$} & {$1:\Z$} & {$2:\Z^5$} & {$3:\Z^5$} & {$4:\Z$} & $0$ &{$0$}& {$0$} & {$0$} \\ 	\hline &&&&&&&&& \\[-1em]
		
		{$4$} &{$1:\Z$} & {$3:\Z^2$} & {$0$} & {$3:\Z^7$} & {$4:\Z^{13}$} & $5:\Z^{7}$ &{$6:\Z$}& {$0$} & {$0$} \\ 	\hline &&&&&&&&& \\[-1em]
	
		{$5$} &{$2:\Z$} & {$3:\Z^7$} & {$5:\Z$} & {$0$} & {$4:\Z^{8}$} & $5:\Z^{25}$ &{$6:\Z^{25}$}& {$7: \Z^9$} & {$8:\Z$} \\ \hline &&&&&&&&& \\[-1em]
	
	    {$6$} &{$2:\Z$} & {$3:\Z$} & {$5:\Z^{17}$} & {$7:\Z^2$} & {$0$} & $5:\Z^{8}$ &{$6:\Z^{40}$}& {$7: \Z^{63}$} & {$8:\Z^{41}$} \\ \hline &&&&&&&&& \\[-1em]
	
	    {$7$} &{$3:\Z$} & {$5:\Z^{10}$} & {$5:\Z^{8}$} & {$7:\Z^{31}$} & {$9:\Z$} & $0$ &{$6:\Z^{8}$}& {$7: \Z^{56}$} & {$8:\Z^{128}$} \\ \hline  &&&&&&&&& \\[-1em]
	
	    {$8$} &{$3:\Z$} & {$5:\Z^{13}$} & {$8:\Z$} & {$7:\Z^{49}$} & {$9:\Z^{57}$} & $11:\Z^2$ &{$0$}& {$7: \Z^{8}$} & {$8:\Z^{72}$} \\ \hline  &&&&&&&&& \\[-1em]
	
	    {$9$} &{$4:\Z$} & {$5:\Z;~ 7:\Z^4$} & {$8:\Z^{45}$} & {$7:\Z^{8}$} & {$9:\Z^{160}$} & {$11:\Z^{79}$}& $13:\Z$ & {$0$} & {$8:\Z^8$} \\ \hline
	\end{tabular}
	\caption{Reduced homology groups of $r$-independence complexes of grid graphs $G_{2,n}$. For all $n \leq 9$ and $r\leq 9$, $i:0$ ( {\itshape i.e.} $\tilde{H}_i(\Ind_r(G_{2,n}))=0$) for all $i$ not mentioned in the table.}\label{table:grid graph 2n}
\end{table}

\section*{Acknowledgements}
The authors would like to thank the anonymous referee for careful reading and helpful suggestions. The authors are partially funded by a grant from Infosys Foundation. PD is also partially funded by the
MATRICS grant MTR/2017/000239. 

\bibliographystyle{abbrv}

\end{document}